%

\documentclass[aop,MSNbibl,nameyear,seceqn,dvips]{arximspdf}

%

\doi{10.1214/10-AOP623}
\volume{40}
\issue{1}
\pubyear{2012}
\firstpage{162}
\lastpage{212}

\makeatletter
\newcommand{\onrarrow}[1]{\mathrel{\vbox{\m@th\ialign{##\crcr
  $\hfil\scriptstyle\ #1\ \ \hfil$\crcr\noalign{\kern0.5pt\nointerlineskip}%
  \rightarrowfill\crcr}}}}

\newcommand{\mc}{\mathcal}
\newcommand{\izf}{\int_{0}^{\infty}}
\newcommand{\indic}{\mathbf{1}}
\newcommand{\tx}{\tilde{x}}
\newcommand{\on}{\operatorname}
\newcommand{\lkap}{\lambda_{0}^{\kappa}}
\newcommand{\supp}{\operatorname{supp}}
\newcommand{\tp}{\tau_\partial}
\renewcommand{\P}{\mathbb{P}}
\newcommand{\Rho}{\Gamma}
\newcommand{\R}{\mathbb{R}}
\newcommand{\rrho}{\gamma}
\newcommand{\mL}{\mathfrak{L}}
\newcommand{\cond}{|}
\newcommand{\mD}{\mathcal{D}}
\newcommand{\E}{\mathbb{E}}
\renewcommand{\P}{\mathbb{P}}
\newcommand{\tlam}{\tilde\lambda}
\newcommand{\tg}{\tilde{g}}
\newcommand{\eqref}[1]{(\ref{#1})}

\newtheorem{theorem}{Theorem}[section]
\newtheorem{Corollary}[theorem]{Corollary}
\newtheorem{lemma}[theorem]{Lemma}
\newtheorem{Prop}[theorem]{Proposition}

\newproclaim{definition}[theorem]{Definition}
\newproclaim{remark}[theorem]{Remark}
\newtheorem{lemmaA}{Lemma}

\makeatother

\begin{document}
\begin{frontmatter}

\title{Quasilimiting behavior for one-dimensional diffusions with killing\thanksref{TT1}}
\runtitle{Quasilimiting behavior}

\begin{aug}
\author[A]{\fnms{Martin} \snm{Kolb}\ead[label=e1]{kolb@stats.ox.ac.uk}}
and
\author[A]{\fnms{David} \snm{Steinsaltz}\corref{}\ead[label=e2]{steinsal@stats.ox.ac.uk}}

\runauthor{M. Kolb and D. Steinsaltz}

\affiliation{University of Oxford}

\address[A]{Department of Statistics\\
University of Oxford\\
1 South Parks Street\\
Oxford OX1 3TG\\
United Kingdom\\
\printead{e1}\\
\phantom{E-mail:\ }\printead*{e2}}
\end{aug}
\thankstext{TT1}{Supported in part by the New Dynamics of Ageing program.}

\received{\smonth{4} \syear{2010}}
\revised{\smonth{8} \syear{2010}}

%
\begin{abstract}
This paper extends and clarifies results of Steinsaltz and Evans
[\textit{Trans. Amer. Math. Soc.} \textbf{359} (\citeyear{quasistat}) 1285--1234], which found
conditions for convergence of a killed
one-dimensional diffusion conditioned on survival, to a quasistationary
distribution whose density is given by the principal eigenfunction of
the generator. Under the assumption that the limit of the killing at
infinity differs from the principal eigenvalue we prove that
convergence to quasistationarity occurs if and only if the principal
eigenfunction is integrable. When the killing at $\infty$ is
\textit{larger} than the principal eigenvalue, then the eigenfunction
is always
integrable. When the killing at $\infty$ is \textit{smaller}, the
eigenfunction is integrable only when the unkilled process is
recurrent; otherwise, the process conditioned on survival converges to
0 density on any bounded interval.
\end{abstract}

%
\begin{keyword}[class=AMS]
\kwd[Primary ]{60J60}
\kwd{60J70}
\kwd[; secondary ]{60J35}
\kwd{47E05}
\kwd{47F05}.
\end{keyword}

\begin{keyword}
\kwd{Killed one-dimensional diffusions}
\kwd{quasi-limiting distributions}.
\end{keyword}

\end{frontmatter}

\section{Introduction} \label{sec:intro}

\subsection{Background and history} \label{sec:background}

Killed Markov processes are central objects in probability theory. One
natural line of inquiry runs to questions about the asymptotic behavior
of the process conditioned on long-term survival.

We work with the one-dimensional diffusions $(X_t)_{t \geq0}$ on the
interval $[0,\infty)$, generated by the differential expression
$-L:=\frac{1}{2}\frac{d^2}{dx^2}+b\frac{d}{dx}$. In addition to the\vspace*{1pt}
possible killing at the boundary 0, there is is a killing rate $\kappa
$, so that we will really be concerned with the differential expression
$-L^{\kappa}:=-L-\kappa$. (We leave the description of the domain,
and hence of the operator and attendant semigroup, for later, because
much of the analysis will depend on moving flexibly among various
domains on which this differential expression can operate.) Let $\nu$
be a compactly supported distribution on $[0,\infty)$. We aim to find
conditions which imply convergence of the family of distributions
%
\begin{equation}\label{mu}
\mu^{\nu}_t(\cdot):=\mathbb{P}_{\nu} (X_t \in\cdot\mid\tp
> t )
\end{equation}
as $t \rightarrow\infty$. This limit is sometimes called the
\textit{Yaglom limit}, after the seminal work of \citet{aY47} on
branching Markov processes conditioned on long survival. Any such limit
must be \textit{quasistationary}, in the sense that when started in
this distribution the process will remain in a multiple of the same
distribution for all times. The extensive mathematical development and
wide-ranging applications in this area---a bibliography of papers on
quasistationary distributions and Yaglom limits compiled and
periodically updated by \citet{quasibiblio} lists 403 entries
through 2010---permit us to mention only a smattering of the vast array
of applications of killed Markov processes to biology
[\citet{SV66}, \citet{gH97}, \citet{HT05},
\citet{6authors}, demography: \citet{diffmort},
\citet{hlB76}, \citet{LA09}, medicine: \citet{MS88}, \citet{aY07}
and statistics: \citet{oA95}, \citet{AG03}, \citet{dM04}]. Particularly in the
demographic and medical contexts, where killed Markov processes suggest
themselves as models for populations undergoing culling by mortality or
other processes, Yaglom limits, while rarely mentioned explicitly in
the applied literature, correspond naturally to the observable
distribution of survivors.

The central concerns of this theory are to describe, for a given class
of sub-Markov processes, the quasistationary distributions (if any),
and to describe the convergence (or not) of the process conditioned on
survival to one of these quasistationary distributions. A significant
part of the literature focuses on discrete state spaces, commonly
birth--death processes (or with some more flexible localization of the
transitions), with killing only on the boundary. One of the most
general accounts of the existence and convergence to quasistationary
distributions for discrete processes of this kind can be found in
\citet{FKMP95}. One unusual contribution, outside of these
categories, is \citet{fG01}, which proves convergence to
quasistationarity for fairly arbitrary discrete Markov chains with
general killing, by imposing a stringent Lyapunov-like drift condition.
Existence and vague-convergence conditions for discrete-time Markov
chains on general metric spaces can be found in \citet{LP01}.

Killed birth--death processes naturally generalize to killed diffusions
in the continuous-space context, but these have received rather less
attention. The existence of eigenfunctions for the generator of a
one-dimensional diffusions is simplified in the continuous setting, as
we may rely upon standard theory of ordinary differential equations.
Showing that these eigenfunctions are integrable (hence represent the
densities of distributions), and quasistationary is more involved,
though, and showing Yaglom convergence to the minimal quasistationary
distribution becomes technically challenging, particularly when the
state space is an unbounded interval. The foundation for all later work
on Yaglom convergence of diffusions was laid by \citet{pM61},
who used standard results from Sturm--Liouville theory and the spectral
theorem for self-adjoint operators to prove vague convergence (i.e.,
convergence of the distribution of the process conditioned on being in
a compact set), and uniform convergence under an assumption of strong
inward drift. These results have been substantially extended by a
shifting coalition of researchers who have produced papers
\citet{CMSM95}, \citet{MSM01}, \citet{6authors}, which
elucidate the conditions under which Yaglom convergence occurs, and
distinguish in \citet{MSM04} the $R$-positive situation from the
$R$-null---essentially, exponential-rate decay of probabilities
distinguished from decays that are asymptotically not exactly
exponential---in terms of the eigenfunctions.

One important constraint in most work in this field to date---as well
as Pinsky's results in \citet{rP85}, for diffusions on a compact
domain with gradient-type drift---has been the assumption that killing
occurs only at the boundary. Not only is this restriction unnatural
from the perspective of many of the applications, particularly the
demographic applications discussed in \citeauthor{quasistat} (\citeyear{diffmort}, \citeyear{quasistat}), it
obscures the fundamental links among the spectrum, the killing rate out
at infinity, the recurrence-transience dichotomy and Yaglom
convergence. (An exception which proves the rule is the biological
application of internally killed diffusions [\citet{KT83}], which makes
no reference to any of the literature on killed diffusions and cites
only \citet{eS66} for quasistationary distributions of discrete chains.)

Note that this approach to conditioning is quite different from Doob's
\textit{h-process} or \textit{h-transform}. We can generate an $h$-transform of
a Markov process which corresponds to conditioning on the process
\textit{never} being killed. That is, we look at the distribution of $\{
X_{t}\dvtx t\in[0,s]\}$ for fixed $s$ conditioned on $\tp>T$ (where $\tp$
is the killing time), in the limit as $T\to\infty$; we may then take
a second limit $s\to\infty$ to define the process on $[0,\infty)$.
This procedure generally produces a new Markov process, which is now
unkilled. A well-known example of this is the three-dimensional Bessel
process, which may be derived from the one-dimensional Brownian motion,
conditioned never to hit 0 [\citet{sV07}, Section 6.6]. This is
intimately connected to questions about the Martin boundary.

We will be concerned here only with the Yaglom approach, conditioning
on survival up to finite times. A key difference is that collection of
distributions~$\mu^{\nu}_t$ for different times $t$ are not
consistent, and so cannot be analyzed directly with Markov-process
techniques. They are more amenable to an analytic semigroup
approach.

\subsection{Heuristics} \label{sec:heur}
We begin by observing that general spectral theory---sum\-marized here in
Lemma \ref{L:sqrtcompare}---tells us that the bottom of the
spectrum~$\lkap$ gives the exponential rate of decay of the distribution of
$X_t$ restricted to a compact interval.
What needs to be addressed, then, is the question of whether the
portion of the surviving mass within a compact interval dominates the
total surviving mass. There are two ways of addressing this question.
One is in terms of the spectrum of the $\mL^{2}$ generator. Suppose
$K:=\lim_{x\to\infty}\kappa(x)$ exists. In \citet{quasistat} the
emphasis was placed on the crucial distinction between the cases $\lkap
>K$ and $\lkap<K$.

It turns out that a more useful dichotomy is whether or not $\lkap$ is
an isolated eigenvalue. The eigenvalue $\lkap$ is isolated when the
diffusion takes place on a compact interval with two regular
boundaries, but also in cases which intuitively seem well-approximated
by a compact process, as when there is strong drift pulling the process
in from $\infty$, and when there is strong killing out toward $\infty
$. Thus the isolated-eigenvalue case includes all of the $\lkap<K$
case [see Lemma \ref{spectrum}\hyperlink{it:isolated}{(v)}]. We expect the same
methods that work in finite dimensions, for powers of positive
symmetric matrices, to work in this case as well. One catch is that the
$\mL^{2}$ convergence need not tell us about the convergence of the
conditioned density, which is an $\mL^{1}$ property; indeed, it is
easy to see [cf. Proposition 2.3 of \citet{quasistat}] that the process
always escapes to $\infty$ if $\izf\varphi(\lkap,x)\,d\Rho(x)$ is
not finite, where~$\Rho$ is the speed measure and $\varphi(\lambda
,\cdot)$ is the eigenfunction of the generator, defined as the
solution to an ordinary differential equation in Section \ref
{sec:SLspec}. We show, in Lemma \ref{L:L1L2}, that these conditions
do, in fact, suffice: That is, whenever~$\lkap$ is an isolated
eigenvalue, and the corresponding eigenfunction is also integrable,
then we have convergence to the quasistationary distribution given by
the density $\varphi(\lkap,\cdot)/\int\varphi(\lkap,x)\,d\Rho(x)$.

What about the case when $\lkap$ is not an isolated eigenvalue? This
corresponds to the $R$-null and $R$-transient cases in Tweedie's theory
[\citeauthor{rT74} (\citeyear{rT74}, \citeyear{rT742})], where the decay of the transition kernel is not
exactly exponential with rate $-\lkap$, but slightly faster, in the
sense that $e^{\lkap t}p^{\kappa}(t,x,y)\to0$ ($p^{\kappa}$ being
the diffusion transition kernel). It turns out that in this case the
convergence lines up precisely with the standard recurrence/transience
dichotomy for the unkilled process. [Another way of putting this is to
say that when the $R$-recurrence or $R$-transience does not conform to
the properties of the unkilled process, this must be reflected in the
equality of $\lkap$ and $\lim_{x\to\infty} \kappa(x)$.]

Another way of understanding the nonisolated case is by thinking about
how the condition $\lkap>K$ implies that the distribution must decline
on compact sets at a faster exponential rate than would keep pace with
the killing out toward $\infty$. There are two ways this imbalance in
killing can be maintained: Either the mass vanishes toward $\infty$,
meaning that the scale (of the unkilled diffusion) is finite; this is
the $R$-transient case. Or the scale is infinite with finite speed,
which means that the excess mass keeps returning to 0, at long
intervals, and the killing rate $\lambda_{0}$ corresponds to real
killing at~0, not escape; this is the $R$-null case. In the
$R$-transient case the conditioned process escapes to infinity. In the
$R$-null case the conditioned process converges to the quasistationary
distribution. Note that the arguments for the one or the other behavior
seem to refer only to the motion, irrespective of the killing $\kappa$.

\subsection{Main results} \label{sec:main}
The core of this work is the identification of the asymptotic behavior
of killed diffusions in terms of the relation between the principal
eigenvalue of the generator, the limit behavior of $\kappa$ and the
nature of the boundary at~$\infty$. We move beyond earlier work in
removing unnecessary constraints on the drift and killing terms, and in
providing easily testable criteria for determining whether the
conditioned process converges to a quasistationary distribution for all
cases in which the bottom of the spectrum~$\lkap$ does not coincide
with the limit of the killing rate at $\infty$.

We begin by summarizing the most important results. These results
presuppose general assumptions and restrictions on the processes
involved, which will be formulated fully in Section
\ref{sec:ADPR}.
The quasistationary distribution will be defined in terms of its
density $\varphi(\lkap,\cdot)$ with respect to $\Rho$, where
$\varphi(\lambda_0^{\kappa},\cdot)$ is the principal eigenfunction
of the generator, defined as the solution to an ordinary differential
equation with appropriate boundary condition, stated formally in
Section \ref{sec:SLspec}.

\begin{enumerate}[(iii)]
\item[(i)] \textit{Convergence on compacta}: There is always convergence
to the quasistationary distribution on compact sets, stated formally as
Theorem~\ref{generallocalMandl}.\hypertarget{it:generallocalMandl}{}%
\item[(ii)] \textit{Dichotomy}: If $\lkap>\limsup_{x\to\infty} \kappa
(x)$ or $\lkap<\liminf_{x\to\infty} \kappa(x)$, then the
conditioned process either converges to the quasistationary
distribution with density $\varphi(\lkap,\cdot) (\izf\varphi
(\lkap,y)\, d\Rho(y) )^{-1}$ with respect to $\Rho$, or escapes
to $\infty$. This behavior is independent of the initial distribution,
provided only that it is compactly supported. (For explanation of the
terminology, see Section \ref{sec:quasi}.) This is Theorem 3.3 of
\citet{quasistat}, but it is restated here as Theorem \ref{Thm33} in
a slightly stronger form, as several restrictions have been
removed.\hypertarget{it:compacta}{}
\item[(iii)] \textit{Yaglom convergence with high killing at $\infty$
always}: If $\lkap<\liminf_{x\to\infty}\kappa(x)$, then the
conditioned process converges to the quasistationary distribution. This
is stated as Theorem \ref{limkbiggerev}. Note that this includes the
(somewhat unintuitive) fact that a bound on the $\mL^{2}$ spectrum
implies that $\izf\varphi(\lkap,y)\,d\Rho(y)<\infty$, which is a
fact about the $\mL^{1}$ spectrum.\hypertarget{it:bigK}{}
\item[(iv)] \textit{Yaglom convergence with low killing at $\infty$ when
recurrent}: If $K:=\lim_{x\to\infty} \kappa(x)$ exists and $K<\lkap
$, then the behavior of the conditioned process depends on the
transience or recurrence of the unkilled process. If the unkilled
process is transient---that is, if $\izf\rrho(x)^{-1}\,dx<\infty
$---then the conditioned process escapes to $\infty$. If the unkilled
process is recurrent---that is, if $\izf\rrho(x)^{-1}\,dx=\infty
$---then the conditioned process converges to the quasistationary
distribution. These results are stated in Theorems \ref{Escape} and
\ref{qsddrift}.\hypertarget{it:smallK}{}
\item[(v)] \textit{Yaglom convergence equivalent to integrability of the
principal eigenfunction}: If $\lkap< \liminf_{x \rightarrow\infty
}\kappa(x)$ or $\lkap> \lim_{x \rightarrow\infty}\kappa(x)$, then
convergence to quasistationarity is equivalent to the integrability of
the principal eigenfunction $\varphi(\lambda_0^{\kappa},\cdot)$.
This is stated as Theorem \ref{thm:integrability} \hypertarget{it:integrab}{}
\end{enumerate}
Our results extend those of \citet{quasistat} in several ways:
\begin{itemize}
\item In \citet{quasistat} the authors had to impose conditions that
required the drift and killing not to grow too quickly, or be too
irregular in order to insure that $\infty$ is of the limit point type.
Here there is no constraint on the killing other than local
boundedness, and no constraint on the drift other than that which
implies that $\infty$ is inaccessible. The case of an entrance
boundary at infinity was excluded in \citet{quasistat}. Moreover, in
contrast to \citet{quasistat} item \hyperlink{it:generallocalMandl}{(i)} is
shown to hold without any further condition on the initial distribution
other than compact support.
\item In \citet{quasistat} an assertion of the type \hyperlink{it:bigK}{(iii)}
was shown under the assumption that $K = \lim_{x \rightarrow\infty
}\kappa(x)$ exists and some further growth restrictions on $b$ and
$\kappa$.
\item Item \hyperlink{it:smallK}{(iv)} describes the most substantial advance:
The case $\lim_{x \rightarrow\infty}\kappa(x)<\lkap$ is now shown
to be split by the standard recurrence-transience dichotomy, which
tells us whether the conditioned process converges or escapes. In
\citet{quasistat} the dichotomy could not be decided if $\lim_{x
\rightarrow\infty}\kappa(x)<\lkap$.
\item In \citet{quasistat} the assertion of item \hyperlink{it:integrab}{(v)} was established only in the case $\lkap< \liminf_{x
\rightarrow\infty}\kappa(x)$.
\end{itemize}

\section{Assumptions, definitions and previous results} \label{sec:ADPR}
\subsection{Analytic terminology} \label{sec:analytic}
In general a Sturm--Liouville operator is any formal differential
operator of the form $\tau= \tau_{p,q,V}= -\frac{1}{2p}\frac
{d}{dx}q\frac{d}{dx} + V$, where $p,q\dvtx (a_1,a_2) \rightarrow(0,\infty
)$ and $V\dvtx(a_1,a_2)\rightarrow\mathbb{R}$ are sufficiently
well-behaved functions. In this work we consider only operators where
$p=q=\rrho$, $V=\kappa\geq0$ and $a_1=0$, $a_2=\infty$. Note that
the diffusion coefficient has been set to~$1$. However, the case of a
general nondegenerate diffusion coefficient can be reduced to the
present case via a time change. Thus our results can be applied to the
case of a general diffusion coefficient. This reduction simplifies the
formulas considerably.
Moreover, we always assume in this chapter that $\rrho(x) = e^{2\int
_0^x b (s)\,ds}$ for some $b \in\mL^{1}_{loc}([0,\infty))\cap
C((0,\infty))$ and $0 \leq\kappa\in C ([0,\infty) )$.
These conditions are not entirely necessary, but this constraint still
admits a large class of one-dimensional diffusions. [However, see \citet{6authors} for a natural application to biology which requires $b$ to
be singular at 0.] Concerning the assumptions on $b$ we could replace
the condition $b \in\mL^{1}_{loc}([0,\infty))$ by the condition that
$\int_0^1 e^{-\int_c^x2b(s)\,ds}\,dx<\infty$ for some $c\in
(0,\infty)$, and $\int_0^1 e^{\int_c^x 2b(s)\,ds}\,dx<\infty$,
which is equivalent to saying that the boundary point $0$ is regular in
the sense of Feller and also in the sense of Weyl. In this paper we
will consistently use $\Rho$ as a reference measure instead of the
Lebesgue measure, which is different from the convention adopted in
\citet{quasistat}. Recall that the speed measure of a one-dimensional
diffusion is also the reversing measure, with respect to which the
generator is symmetric. Unless otherwise indicated, we will always use
the bare notation $\mL^{2}$ to mean $\mL^{2} ((0,\infty),\Rho
 )$, and for $f,g\in\mL^{2}$ we have the inner product
%
\begin{equation} \label{E:innerprod}
\langle f,g\rangle=\izf f(x)g(x)\rrho(x) \,dx.
\end{equation}
The formal differential operator $L^{\kappa} = -\frac{1}{2\rrho
}\frac{d}{dx}\rrho\frac{d}{dx} + \kappa$ gives rise to a closable
densely defined quadratic form $\tilde{q}^{\kappa, \alpha}$ in $\mL
^2$ by
%
\begin{eqnarray} \label{E:defineqk}
\qquad &&\varphi\mapsto\tilde{q}^{\kappa,\alpha}(\varphi)\nonumber\\[-8pt]\\[-8pt]
\qquad &&\qquad  =
\cases{
\displaystyle \alpha\varphi(0)^{2}+\frac{1}{2}\int_0^{\infty}|\varphi
'(y)|^2\rrho(y)\,dy + \int_0^{\infty}\kappa(y)|\varphi(y)|^2
\rrho(y)\,dy, \vspace*{2pt}\cr
\qquad   \mbox{if $\alpha<\infty$,}\vspace*{4pt}\cr
\displaystyle \frac{1}{2}\int_0^{\infty}|\varphi'(y)|^2\rrho(y)\,dy + \int
_0^{\infty}\kappa(y)|\varphi(y)|^2 \rrho(y)\,dy,\vspace*{2pt}\cr
\qquad  \mbox{if $\alpha=\infty$},
}\nonumber
\end{eqnarray}
for any $\varphi\in\mD_{\kappa,\alpha}$, where $\mD_{\kappa
,\alpha}$ is defined by
\[
\mathcal{D}_{\kappa, \alpha}:=
\cases{
 \lbrace\varphi\in\mL^2 |\varphi\in C^1(0,\infty)\cap
C([0,\infty)),  \tilde{q}^{\kappa,\alpha}(\varphi)<\infty
\rbrace,\vspace*{1pt}\cr
\qquad  \mbox{if $\alpha\in[0,\infty)$,} \vspace*{3pt}\cr
 \lbrace\varphi\in\mL^2 |\varphi\in C^1(0,\infty)\cap
C([0,\infty)), \varphi(0)=0,  \tilde{q}^{\kappa,\infty}(\varphi
)<\infty \rbrace,\vspace*{1pt}\cr
\qquad \mbox{if $\alpha= \infty$}.
}
\]
The closure of this quadratic form will be denoted by $q^{\kappa
,\alpha}$. To the quadratic form $q^{\kappa,\alpha}$ there
corresponds a uniquely defined positive self-adjoint 
operator~$L^{\kappa,\alpha}$ with a dense domain of definition $\mathcal
{D}(L^{\kappa,\alpha})$. It is easy to see (essentially via
integration by parts) that the action of the operator $L^{\kappa
,\alpha}$ is given by
\[
L^{\kappa,\alpha}\varphi(x) = -\tfrac{1}{2}\varphi''(x) - b(x)
\varphi'(x) + \kappa(x) \varphi(x).
\]
By definition of the operator $L^{\kappa,\alpha}$ every element
$\varphi\in\mathcal{D}(L^{\kappa,\alpha})$ is absolutely
continuous and satisfies the boundary condition $2\alpha\varphi(0)=
\varphi'(0)$ [or $\varphi(0)=0$ when $\alpha=\infty$]. As in the
definition of $q^{\kappa,\alpha}$ we see that $\alpha=\infty$
corresponds to Dirichlet condition at $0$ (instantaneous killing), and
$\alpha= 0$ to Neumann condition (pure reflection) at $0$.

The bottom of the spectrum of $L^{\kappa}$ will be denoted by $\lambda
^{\kappa}_0$. The spectrum of the self-adjoint operator $L^{\kappa}$
is written $\Sigma(L^{\kappa})$. Where there is no danger of
confusion, the corresponding objects with $\kappa\equiv0$ will also
be denoted by~$q$, $L$ and $\lambda_0$ instead of $q^0$, $L^0$ and
$\lambda_0^0$, respectively (or $q^{0,\alpha}$, $L^{0,\alpha}$ and
$\lambda_{0}^{0,\alpha}$). Since $L^{\kappa}$ and $L$ are
self-adjoint operators, the spectral theorem implies the existence of
spectral resolutions $(E^{\kappa}_{\lambda})_{\lambda\in[\lambda
_0^{\kappa},\infty)}$ and $(E_{\lambda})_{\lambda\in[\lambda
_0,\infty)}$, respectively. For the basic facts concerning spectral
theory of self-adjoint operators the reader should consult \citet{jW00}.

The spectral theorem for self-adjoint operators allows us to define
functions $f(L^{\kappa})$ of the operator. For every Borel-measurable
function $f\dvtx\mathbb{R} \rightarrow\mathbb{R}$ the operator
$f(L^{\kappa})$ is defined via
%
\begin{eqnarray}
\mathcal{D}(f(L^{\kappa}))&=& \biggl\lbrace u \in\mL^2 \Big|\int
_{\Sigma(L^{\kappa})}|f(\lambda)| ^2\,d\|E^{\kappa}u\|^2(\lambda
)<\infty \biggr\rbrace,\label{spectraltheorem1}\\
f(L^{\kappa})u &=& \int_{\Sigma(L^{\kappa})}f(\lambda)\,dE^{\kappa
}(\lambda) u,\label{spectraltheorem2}\\
 \|f(L^{\kappa}) u \|^{2} &=& \int_{\Sigma(L^{\kappa})}
f(\lambda)^{2} \,d\|E^{\kappa}u\|^2(\lambda). \label{spectraltheorem3}
\end{eqnarray}
Observe that for a Borel-measurable function $f\dvtx[0,\infty)\rightarrow
\mathbb{R}$ and $a \geq0$ we have $\operatorname{Ran} (f(L^{\kappa})) \subset
\mathcal{D}((L^{\kappa})^{a})$ if $[0,\infty) \ni\lambda\mapsto
|\lambda^{a} f(\lambda)|$ is bounded. This implies in particular that
the range of $e^{-t L^{\kappa}}$ is contained in the domain of all
powers of $L^{\kappa}$. Moreover the spectral theorem allows us to
clarify further the connection between the quadratic form $q^{\kappa}$
and the associated nonnegative operator $L^{\kappa}$. Let $\sqrt
{L^{\kappa}}$ denote the unique nonnegative square root of $L^{\kappa
}$, which is defined using the spectral theorem. Then we have $\mathcal
{D}(q^{\kappa})=\mathcal{D}(\sqrt{L^{\kappa}})$, and for every $f
\in\mathcal{D}(L^{\kappa})$ we have
%
\begin{equation}\label{formoperator}
q^{\kappa}(f,g) =  \bigl\langle\sqrt{L^{\kappa}}f, \sqrt{L^{\kappa
}}g \bigr\rangle.
\end{equation}
Using the ``elliptic'' Harnack inequality and Weyl's spectral theorem
it is not difficult to see that
%
\begin{eqnarray}\label{possolutions}
\lambda_0^{\kappa} &=& \max\biggl\lbrace\lambda\in\mathbb{R} \mid
\mbox{there is a positive solution of $(L^{\kappa}-\lambda)u=0$}
\nonumber\\[-8pt]\\[-8pt]
&&\hspace*{94pt}\mbox{with }u(0)=\frac{1}{1+\alpha},  \frac{1}{2}u'(0)=\frac
{\alpha}{1+\alpha} \biggr\rbrace.\nonumber
\end{eqnarray}
[This was proved by \citet{pM61} using slightly different methods.]
Equation \eqref{possolutions} already suggests that for $0 \leq
\lambda\leq\lambda_0^{\kappa}$ solutions of $(L^{\kappa}-\lambda
)u=0$ might have a probabilistic significance.

In the sequel we usually denote by $\varphi(\lambda,\cdot)$ the
solution of the eigenvalue equation
%
\begin{equation}\label{eigenodequation}
(L^{\kappa}-\lambda)\varphi(\lambda,\cdot)=0,\qquad  \varphi
(\lambda,0)=\frac{1}{1+\alpha}, \frac{1}{2}\varphi'(\lambda
,0)=\frac{\alpha}{1+\alpha}.
\end{equation}
It might be important to note that solutions in \eqref{possolutions}
and \eqref{eigenodequation} are solutions in the sense of the theory
of ordinary differential equations. An important issue is whether the
solution also belongs to the Hilbert space $\mL^2$ and thus is an
eigenfunction in the sense of spectral theory. When we wish to
emphasize that certain solutions are also eigenfunctions in the sense
of spectral theory, we denote them by $u_{\lambda}$.

Crucial to much of our analysis is the fact that the asymptotic
behavior of the semigroup is wholly determined by the spectrum right
near the base of the spectral measure, which we show in Lemma \ref
{L:sqrtcompare}, and then that the base of the spectral measure for any
nonnegative function is $\lkap$, which is Lemma \ref{L:supportbase}.
For $g\in\mL^{2}$, define $\lambda_{g}$ to be the infimum of the
support of the spectral measure of $g$; that is,
%
\begin{equation} \label{E:lambdag}
\lambda_{g}:= \sup \{\lambda \dvtx \|E _{\lambda} g \|=0 \},
\end{equation}
and let $\mc{A}_{\lambda}$ be the subspace of $\mL^{2}$ consisting
of functions $f$ such that $\lambda_{f}\ge\lambda$.

\begin{lemma} \label{L:sqrtcompare}
Given $g\in\mathcal{D}(L^{\kappa,\alpha})$, we have
%
\begin{eqnarray} \label{E:sqrtcompare}
 |g(x)|&\le& C_{\alpha}(x) \bigl\|\sqrt{L^{\kappa}} g \bigr\|
+C'_{\alpha} \| g \|\nonumber\hspace*{-35pt}\\[-8pt]\\[-8pt]
&=&C_{\alpha}(x)  \biggl(\int_{0}^{\infty}\lambda \,d\|E^{\kappa} g\|
^{2}(\lambda) \biggr)^{1/2}+C'_{\alpha}  \biggl(\int_{0}^{\infty}d\|
E^{\kappa} g\|^{2}(\lambda) \biggr)^{1/2},\nonumber\hspace*{-35pt}
\end{eqnarray}
where
%
\begin{equation} \label{E:Calpha}
C_{\alpha}(x):=
\cases{
\displaystyle \max \biggl\{\sqrt{\frac{2}{\alpha}}, \biggl(2\int_{0}^{x}\rrho
(y)^{-1}\,dy \biggr)^{1/2}  \biggr\},&\quad for $\alpha>0$,\cr
\displaystyle  \biggl(18\int_{0}^{x}\rrho(y)^{-1}\,dy \biggr)^{1/2},&\quad  for $\alpha=0$,
}\hspace*{-35pt}
\end{equation}
and
%
\begin{equation} \label{E:Cprimealpha}
\quad C'_{\alpha}:=
\cases{
0,&\quad if $\alpha>0$ or $ \displaystyle \int_{0}^{\infty
}\rrho(y)\,dy=\infty$,\cr
\displaystyle  \biggl(\int_{0}^{\infty} \rrho(y) \,dy  \biggr)^{-1/2},&\quad if
$\alpha=0$ and $\displaystyle \int_{0}^{\infty}\rrho(y)\,dy<\infty$.
}\hspace*{-35pt}
\end{equation}
For any $t>1/2\lambda_{g}$,
%
\begin{equation} \label{E:sqrtcompare2}
\sup|e^{-tL^{\kappa}}g(x)|\le \bigl(C_{\alpha}(x)\lambda
_{g}+C'_{\alpha} \bigr)\|g\| e^{-t\lambda_{g}}.
\end{equation}
\end{lemma}

\begin{pf}
Suppose $\alpha\in(0,\infty)$. Since $g\in\mathcal{D}(L^{\kappa
})$ is differentiable, we have
\begin{eqnarray*}
|g(x)|&\le&|g(0)|+\izf|g'(y) |\frac{\indic_{[0,x]}}{\rrho(y)}
\rrho(y)\,dy\\
&=& |g(0)|+  \biggl\langle|g'|, \frac{\indic_{[0,x]}}{\rrho}
\biggr\rangle\\
&\le&|g(0)|+ \biggl\| \frac{\indic_{[0,x]}}{\rrho}  \biggr\| \cdot\|
g'\|\qquad   \mbox{(Cauchy--Schwarz inequality)}\\
&\le& \biggl(2 |g(0)|^{2}+ 2 \biggl\| \frac{\indic_{[0,x]}}{\rrho
} \biggr\| \izf|g'(y)|^{2}\rrho(y)\,dy  \biggr)^{1/2}\\
&\le& C_{\alpha} q^{\kappa,\alpha}(g)^{1/2}\\
&=& C_{\alpha}  \bigl\| \sqrt{L^{\kappa,\alpha}}g  \bigr\|
\end{eqnarray*}
by \eqref{E:defineqk} and \eqref{formoperator}.
The spectral theorem \eqref{spectraltheorem3} allows us to represent
$\sqrt{L^{\kappa}}g$ in terms of the spectral resolution, yielding
\eqref{E:sqrtcompare}.

If $\alpha=\infty$, then $g(0)=0$, so the corresponding term drops
out of the bound.

If $\alpha=0$, we have the alternative bound
\begin{eqnarray*}
|g(x)|&\le&|g(0)|+C  (q^{\kappa,0}(g) )^{1/2},\\
|g(x)|&\ge&|g(0)|-C (q^{\kappa,0}(g) )^{1/2},
\end{eqnarray*}
where $C=\sqrt{2} \| \frac{\indic_{[0,x]}}{\rrho}  \|$.
The second bound gives us
\[
\|g\|^{2}\ge \biggl( \int_{0}^{\infty}\rrho(y)\,dy  \biggr)  \bigl(
 | g(0) |^{2}-2C  | g(0) | (q^{\kappa
,0}(g) )^{1/2} \bigr),
\]
which implies that
\[
 | g(0) | \le2C\sqrt{q^{\kappa,0}(g)}+ \|g\|^{2}  \biggl(
\int_{0}^{\infty}\rrho(y)\,dy  \biggr)^{-1/2}.
\]
We combine this with the above calculation to obtain the appropriate
version of \eqref{E:sqrtcompare}.

For any positive $t$, we have $g_{t}:=e^{-tL^{\kappa}}g\in\mc
{D}(L^{\kappa})$, so we may apply \eqref{E:sqrtcompare} to obtain
\[
|g_{t}(x)|\le \biggl(\int_{0}^{x}\rrho(y)^{-1}\,dy \biggr)^{1/2}
\bigl\| \sqrt{L^{\kappa}}e^{-tL^{\kappa}}g  \bigr\|.
\]
Applying again the spectral theorem \eqref{spectraltheorem3}---now
with $f(x)=\sqrt{x} e^{-tx}$---yields
\begin{eqnarray*}
 \| \sqrt{L^{\kappa}}e^{-tL^{\kappa}}g  \|^{2}&=&\izf
\lambda e^{-2t\lambda} \,d \| E^{\kappa} g \|^{2} (\lambda
)\\
&=&\int_{\lambda_{g}}^{\infty} \lambda e^{-2t\lambda} \,d \|
E^{\kappa} g \|^{2} (\lambda)\\[-1pt]
&\le&\lambda_{g} e^{-2t\lambda_{g}}\|g\|^{2},
\end{eqnarray*}
since $\lambda e^{-2t\lambda}$ attains its maximum at $\lambda=\frac
{1}{2t}$. Similarly,
\[
 \|e^{-tL^{\kappa}}g  \|^{2}=\int_{\lambda_{g}}^{\infty}
e^{-2t\lambda} \,d \| E^{\kappa} g \|^{2} (\lambda)\le
e^{-2t\lambda_{g}}\|g\|^{2}.\vspace*{-2pt}
\]
\upqed
\end{pf}

\begin{lemma} \label{L:supportbase}
For any nonnegative measurable function $f \in\mL^2$ with\break \mbox{$\|f\|>0$},
the spectral measure $d\|E^{\kappa}f\|^{2}(\lambda)$ corresponding to
$f$ includes $\lkap$ in its support.\vspace*{-2pt}
\end{lemma}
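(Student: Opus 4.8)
The plan is to reduce the statement to two facts that are already available: $\lkap=\inf\Sigma(L^\kappa)$, and the strict positivity of the ground-state solution $\varphi(\lkap,\cdot)$ furnished by \eqref{possolutions}. Since the support of any spectral measure $d\|E^{\kappa}f\|^2$ is contained in $\Sigma(L^\kappa)\subseteq[\lkap,\infty)$, one has $\lambda_f\ge\lkap$ automatically, so it suffices to prove $\lambda_f\le\lkap$, i.e. that $d\|E^{\kappa}f\|^2$ gives positive weight to every interval $[\lkap,\lkap+\varepsilon]$. I would read this through the Sturm--Liouville eigenfunction expansion: there is a scalar spectral measure $d\sigma$ on $[\lkap,\infty)$ with $\supp d\sigma=\Sigma(L^\kappa)$, and a generalized transform $\tilde f(\lambda):=\izf f(x)\varphi(\lambda,x)\rrho(x)\,dx$ built from the very ODE solutions $\varphi(\lambda,\cdot)$ of \eqref{eigenodequation}, for which $d\|E^{\kappa}f\|^2(\lambda)=|\tilde f(\lambda)|^2\,d\sigma(\lambda)$. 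In this language the claim is precisely that $\tilde f$ does not vanish $d\sigma$-almost everywhere immediately above $\lkap$.

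First I would dispose of the case in which $f$ is bounded with compact support in $(0,\infty)$. Here $\tilde f(\lambda)=\int_{\supp f}f(x)\varphi(\lambda,x)\rrho(x)\,dx$ is an \emph{entire} function of $\lambda$, because $\lambda\mapsto\varphi(\lambda,x)$ is entire for each fixed $x$ and the integration runs over a compact set. At the base of the spectrum the integrand is strictly positive: $\varphi(\lkap,\cdot)>0$ by \eqref{possolutions} while $f\ge0$ with $\|f\|>0$, so $\tilde f(\lkap)=\izf f\,\varphi(\lkap,\cdot)\,d\Rho>0$. By continuity of the entire function $\tilde f$ there are $\varepsilon,\delta>0$ with $|\tilde f(\lambda)|\ge\delta$ on $[\lkap,\lkap+\varepsilon]$. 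Since $\lkap\in\supp d\sigma$, every $[\lkap,\lkap+\varepsilon']$ with $0<\varepsilon'\le\varepsilon$ has $\sigma([\lkap,\lkap+\varepsilon'])>0$, whence $d\|E^{\kappa}f\|^2([\lkap,\lkap+\varepsilon'])=\int_{[\lkap,\lkap+\varepsilon']}|\tilde f|^2\,d\sigma\ge\delta^2\sigma([\lkap,\lkap+\varepsilon'])>0$. As this holds for arbitrarily small $\varepsilon'$, we conclude $\lkap\in\supp d\|E^{\kappa}f\|^2$.

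The remaining, and genuinely delicate, point is to pass from compactly supported $f$ to an arbitrary non-negative $f\in\mL^2$. The obstruction is that the spectral projections $E^{\kappa}_\lambda$ are not positivity preserving, so one cannot simply approximate $f$ from below by compactly supported functions and pass to the limit in $\lambda_{f_n}$. When $\lkap$ is an \emph{eigenvalue} there is no difficulty: the semigroup $e^{-tL^\kappa}$ is positivity improving, so by Perron--Frobenius the eigenfunction $u_{\lkap}$ is strictly positive and $\langle f,u_{\lkap}\rangle>0$ exhibits spectral mass at $\lkap$ directly. The hard case is when $\lkap$ lies at the bottom of the continuous spectrum and $\varphi(\lkap,\cdot)\notin\mL^2$, so that $\tilde f(\lkap)=\izf f\,\varphi(\lkap,\cdot)\,d\Rho$ may diverge and with it the entirety of $\tilde f$. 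To attack this I would first exploit positivity improving to replace $f$ by $f_s:=e^{-sL^\kappa}f$, which is strictly positive and smooth, lies in $\mathcal{D}((L^\kappa)^k)$ for every $k$, and satisfies $\lambda_{f_s}=\lambda_f$ (because $e^{-s\lambda}>0$); the pointwise and semigroup estimates of Lemma \ref{L:sqrtcompare} then control $f_s$ and its pairing with $\varphi(\lambda,\cdot)$. One then approximates by $f_s\,\indic_{[0,n]}$ and argues, from the monotone convergence $\widetilde{f_s\indic_{[0,n]}}(\lkap)\uparrow\izf f_s\,\varphi(\lkap,\cdot)\,d\Rho>0$ together with the Weyl--Titchmarsh regularity of $\lambda\mapsto\tilde f_s(\lambda)$ near $\lkap$, that the spectral mass just above $\lkap$ cannot vanish in the limit. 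I expect exactly this step --- controlling $\tilde f$ uniformly as $\lambda\downarrow\lkap$ when $\varphi(\lkap,\cdot)$ fails to be square-integrable --- to be the main obstacle; everything else is either the elementary compact-support computation above or standard spectral theory.
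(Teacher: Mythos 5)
Your treatment of the compactly supported case is correct and self-contained: for bounded $f$ with compact support the transform $\tilde f(\lambda)=\izf f(x)\varphi(\lambda,x)\rrho(x)\,dx$ is indeed entire, positive at $\lkap$ because $\varphi(\lkap,\cdot)>0$, and hence bounded below near $\lkap$, which together with $\lkap\in\supp d\sigma$ forces spectral mass in every interval $[\lkap,\lkap+\varepsilon']$. The difficulty is that this is not the case the lemma is really about, and your argument for general non-negative $f\in\mL^2$ is an acknowledged sketch rather than a proof. The specific step that does not close is the passage from $\widetilde{f_s\indic_{[0,n]}}(\lkap)\uparrow\izf f_s\,\varphi(\lkap,\cdot)\,d\Rho$ to a statement about the spectral measure of $f_s$ near $\lkap$: when $f_s$ is not compactly supported its transform is defined only as an $\mL^2(d\sigma)$ limit, it has no pointwise value at $\lkap$, and the limit of the truncated transforms at the single point $\lkap$ may well be $+\infty$ while the $\mL^2(d\sigma)$-limit vanishes $\sigma$-a.e.\ on a neighbourhood of $\lkap$; monotone convergence at one point gives no lower bound on $\int_{[\lkap,\lkap+\varepsilon]}|\tilde f_s|^2\,d\sigma$. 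Note also that truncating from the left does not bootstrap: $f\indic_{[0,n]}\to f$ in $\mL^2$ only shows that the spectral mass of $f\indic_{[0,n]}$ near $\lkap$ (positive by your first argument) converges to that of $f$, and that limit could be zero.

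The paper closes exactly this gap by truncating from the \emph{right} and converting the assumed spectral gap into an ODE statement. Suppose $\|E^{\kappa}_{\lambda_*}f\|=0$ for some $\lambda_*>\lkap$. Since $|h|\le f$ implies $\bigl\|e^{-\frac{t}{2}L^{\kappa}}h\bigr\|\le\bigl\|e^{-\frac{t}{2}L^{\kappa}}f\bigr\|$ (positivity preservation of the semigroup, which you invoked but only in the other direction), every such $h$, in particular every tail $f_n=f\indic_{[n,\infty)}$, inherits the spectral gap. For $\tilde\lambda\in(\lkap,\lambda_*)$ the resolvent $R_{\tilde\lambda}=(L^{\kappa}-\tilde\lambda)^{-1}$ is then defined on $f_n$; the representation $R_{\tilde\lambda}f_n=\izf e^{s\tilde\lambda}e^{-sL^{\kappa}}f_n\,ds$ shows $R_{\tilde\lambda}f_n\ge 0$, and $L^{\kappa}R_{\tilde\lambda}f_n=\tilde\lambda R_{\tilde\lambda}f_n$ on $[0,n]$ because $f_n$ vanishes there. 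Normalising at $0$ and letting $n\to\infty$ produces a globally non-negative solution of $(L^{\kappa}-\tilde\lambda)u=0$ satisfying the boundary condition, contradicting the characterisation \eqref{possolutions} of $\lkap$ as the largest such $\lambda$. You would need either this construction or some genuinely new control of $\tilde f$ near the bottom of the continuous spectrum to complete your argument.
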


\begin{pf}
Since $e^{-L^{\kappa}}f$ is everywhere nonnegative (except perhaps at
the boundary), and its associated spectral measure has the same support
as $d\|E^{\kappa}f\|^{2}$, we may assume that if there were a
counterexample it would not vanish off the boundary.

Suppose there is some $\lambda_{*}>\lambda_{0}^{\kappa}$ such that
$\|E^{\kappa}_{\lambda^{*}} f\|=0$.
Then for any $h \in\mL^2$ with $|h| \le f$,
\begin{eqnarray*}
e^{-\lambda_{*}t}\|f\|^{2}&\ge&\int_{\lambda_{*}}^{\infty}
e^{-\lambda t} \,d\|E^{\kappa}f\|^{2}(\lambda)\\[-1pt]
&=& \| e^{-({t}/{2})L^{\kappa}} f  \|\\[-1pt]
&\ge& \| e^{-({t}/{2})L^{\kappa}} h  \|\\[-1pt]
&=& \int_{\lkap}^{\infty} e^{-\lambda t} \,d\|E^{\kappa}h\|
^{2}(\lambda).
\end{eqnarray*}
Thus, it must be that $d\|E^{\kappa}h\|^{2}(\lambda)$ is supported on
$[\lambda_{*},\infty)$ as well. Thus, for all such $h$ we have $\|
E^{\kappa}_{\lambda^{*}} h\|=0$.

Let $f_{n}=f\cdot\indic_{[0,n]}$. For any $\tlam\in(\lkap,\lambda
_{*})$, by \eqref{spectraltheorem1} $f_{n}$ is in the domain of the
resolvent $R_{\tlam}=(L^{\kappa}-\tlam)^{-1}$. Furthermore, by
\eqref{spectraltheorem3}, if we choose $\lambda_{**}$ large enough so
that $\|\E_{\lambda_{**}} f\|>0$, then
\[
 \| R_{\tlam}f \|^{2} =\int(\tlam-\lambda)^{-2} \,d \|
E^{\kappa}f \|^{2}(\lambda)\ge(\tlam-\lambda_{**})^{-2} \|\E
_{\lambda_{**}} f\|^{2}>0.
\]
Let $g_{n}:=R_{\tlam} f_{n}/\|R_{\tlam}f_{n}\|$. Then $g_{n}$ satisfies
\[
L^{\kappa} g_{n}(x)=\tlam g_{n} \qquad \mbox{for }x\le n.
\]
(In principle, the equality holds only in the $\mL^{2}$ sense, but it
becomes true for all $x$ since both sides are in $\mD_{\alpha}$,
hence, in particular, continuous.) By the representation
\[
R_{\tlam}f_{n}=\int_{0}^{\infty} e^{s\tlam}e^{-sL^{\kappa}}f_{n} \,ds,
\]
we see that $g_{n}$ is nonnegative.\vadjust{\goodbreak}

The space of solutions to the ordinary differential equation $L^{\kappa
}g=\tlam g$ satisfying boundary condition \eqref{E:fellerbound} is
one-dimensional, so if we renormalize to
\[
\tg_{n}:=
\cases{
(1+\alpha)^{-1}g_{n}(0)^{-1}g_{n}, &\quad if $\alpha<\infty$,\cr
g'_{n}(0)^{-1}g_{n}, &\quad if $\alpha=\infty$,
}
\]
we have $\tg_{n}(x)=\tg_{n'}(x)$ for $x\in[0,n]$ when $n\le n'$. The
limit must then be identical with the function $\varphi(\tlam,\cdot
)$, and is everywhere nonnegative, contradicting the characterization
of $\lkap$ in \eqref{possolutions}.
%
\end{pf}

\subsection{Boundary conditions, recurrence and transience} \label{sec:BC}
Defining the diffusion includes a boundary condition at 0, parametrised
by $\alpha\in[0,\infty]$
%
\begin{equation} \label{E:fellerbound}\quad
2\alpha\phi(0) = \phi'(0)\qquad  \mbox{if }\alpha<\infty,\quad \mbox{or}\quad \phi(0)=0 \qquad \mbox{if }\alpha=\infty.
\end{equation}
(It is more common in probability to use a parameter on $[0,1]$,
corresponding to $\alpha/(1+\alpha)$.) The condition $\alpha=\infty
$ corresponds to instantaneous killing at~0, while $\alpha=0$
corresponds to reflection with no killing. Intermediate parameters
correspond to ``slow killing'' at 0, so that the process is killed when
the local time at 0 reaches an exponentially distributed random
variable. The operator $L^{\kappa,\alpha}$ is associated with the
closure of the quadratic form~$\tilde{q}^{\kappa,\alpha}$.
That is, $L$ is the self-adjoint realization of the differential
expression $-\frac{1}{2\rrho}\frac{d}{dx} (\rrho\frac
{d}{dx} )+\kappa$ in $\mL^2$ that has boundary condition \eqref
{E:fellerbound} at $0$. The quadratic form $q$ is a~Dirichlet form, and
the canonically associated Markov process is a solution for the
martingale problem associated to the operator $L$ with the appropriate
killing or reflection at $0$. This means there exists a family of
measures $(\mathbb{P}_t)_{t \in(0,\infty)}$ on the space
$C([0,\infty),\mathbb{R})$ of real valued continuous functions on
$[0,\infty)$ such that for every $f \in\mL^2$ and every $x \in
(0,\infty)$ (due to the Feller property)
\[
(e^{-tL}f)(x) = \mathbb{E}_x[f(X_t),T_0 > t],
\]
where $(X_t)$ is the canonical process on $C([0,\infty),\mathbb{R})$,
and $T_0$ is a random time defined with respect to the local time at 0.
(Again, if $\alpha=\infty$, then $T_{0}$ is the time of first hitting
0; if $\alpha=0$, then $T_{0}\equiv\infty$.) In this normalization,
the scale measure has density $\rrho(x)^{-1}$ with respect to Lebesgue measure.

It is a trivial consequence of the definition of natural scale that
$\int^{X_{t}}\rrho(x)^{-1}$ is a martingale, and so that $\mathbb
{P}_x (X_{t} \mbox{ hits 0 eventually} ) = 1$ for $x>0$ if
and only if the scale function is infinite at $\infty$; that is, for
$c>0$, $\int_c^{\infty}\rrho(x)^{-1}\,dx = \infty$. When there is
killing at 0, the process is recurrent only when the scale function is
infinite at both ends. In analytic terms, recurrence means that the
associated generator is critical [see \citet{GZ91} and \citet{rP95}].
Recall that $L^{\kappa}$ is called critical iff there exists a unique
(up to constant multiples) positive solution $\psi$ of $L^{\kappa
}\psi= 0$. Otherwise $L^{\kappa}$ is called subcritical. We know from
criticality theory---for example, from Theorem~3.15 of \citet{GZ91}---that the generator must be critical if 0 is an isolated
eigenvalue. A generalization of this fact will be used in Lemma~\ref{spectrum}.

The semigroup $e^{-tL^{\kappa}}$ has a probabilistic representation:
We consider the product space
\[
C([0,\infty)) \times[0,\infty) = \lbrace(\omega,\xi) \in
C([0,\infty))\times[0,\infty) \rbrace
\]
endowed with the natural product $\sigma$-field. Let $(\tilde{\mathbb
{P}}_x)_{x \in(0,\infty)}$ denote the family of measures which is
induced by the Dirichlet form $q^0$. For $x \in(0,\infty)$ we define
the measures
\[
\tilde{\mathbb{P}}_x \otimes e^{-\xi}\,d\xi
\]
and the stopping time
\[
T_{\kappa}(\omega,\xi) = \inf \biggl\lbrace s \geq0 \Big|\int_0^s
\kappa(\omega_s)\,ds \geq\xi \biggr\rbrace.
\]
If we set
\[
\tau_{\partial} = \min ( T_0 , T_{\kappa}  )
\]
then we have the Feynman--Kac representation,
%
\begin{equation}\label{FKrepresent}
\qquad (e^{-tL^{\kappa}}f)(x) = \tilde{\mathbb{E}}_x [f(X_t),\tau
_{\partial}>t ] = \mathbb{E}_x \bigl[e^{-\int_0^{t}\kappa
(X_s)\,ds}f(X_t),T_0 > t \bigr].
\end{equation}
It is easy to see that $e^{-tL^{\kappa}}$ is an integral operator. We
denoty by $p^{\kappa}(t,x,y)$ its integral kernel with respect to the
measure $\Gamma$, that is,
\[
e^{-tL^{\kappa}}f(x) = \int_0^{\infty}p^{\kappa}(t,x,y)f(y)\Gamma
(dy) \qquad \mbox{for every $f \in L^2((0,\infty),\Gamma)$}.
\]
Since we are working with the self-adjoint version of the generator
(with respect to the measure $\Rho$), the Feynman--Kac representation
holds in great generality, following the derivation in \citet{DvC00}.
We will generally omit the tilde, since it will be clear from context
which measure is meant.

Let us recall the usual Feller classification [see, e.g., Chapter 3 in
\citet{BL07}] of boundary points for diffusion generators $-\frac
{1}{2}\frac{d^2}{dx^2} - b(x)\frac{d}{dx}$ in an open interval
$(0,r)$.\vspace*{-2pt}
%
\begin{definition}
Let $c \in(0, r)$ be given and set $\rrho(x) = e^{\int_c^x 2b(y)\,
dy}$. The point $r$ is called \textit{accessible}, if $\int
_c^{r}\rrho(x)^{-1} \int_c^x\rrho(y)\,dy\,dx < \infty$, and
otherwise \textit{inaccessible}. If $r$ is an accessible boundary
point, then it is called \textit{regular} iff $\int_c^{r}\rrho
(x)\int_c^x\rrho(y)^{-1}\,dy\,dx < \infty$. If $r$ is accessible and
$\int_c^{r}\rrho(x)\int_c^x\rrho(y)^{-1}\,dy\,dx = \infty$, then
$r$ is called an \textit{exit boundary}. If $r$ is inaccessible, then
it is an \textit{entrance boundary}, iff $\int_c^{r}\rrho(x)\int
_c^x\rrho(y)^{-1}\,dy\,dx < \infty$.
If $r$ is inaccessible and $\int_c^{r}\rrho(x)\int_c^x\rrho
(y)^{-1}\,dy\,dx = \infty$, then $r$ is called \textit{natural}. Of
course the same classification holds for $0$.\vspace*{-2pt}
\end{definition}

Except where otherwise indicated, we will always assume that the
boundary point $\infty$ is inaccessible.\vadjust{\goodbreak}

It is easy to check that the boundary point $r$ is regular if and only
if $\int_c^{r}\rrho(x)\,dx < \infty$ and $\int_c^{r}\rrho(x)^{-1}\,
dx < \infty$. A boundary point is thus regular in the sense of Feller
if and only if it is regular in the sense of Weyl; cf. \citet{JR76}.
Let us recall the relevant definition from the Weyl theory of
self-adjoint extensions of singular Sturm--Liouville operators
$L^{\kappa} =- \frac{1}{2\rrho}\frac{d}{dx}(\rrho\frac
{d}{dx})+\kappa$ in $(0,r)$, adapted to our special
situation.\vspace*{-2pt}
%
\begin{definition}\label{lplc}
We say that boundary $r$ is of \textit{limit-point type}, if there
exists $c \in(0,r)$ and $z\in\mathbb{C}$, and a solution $f$ of
$(L^{\kappa}-z)f = 0$ such that $\int_c^{r}|f(y)|^2\rrho(y)\,dy =
\infty$. If there exists $c \in(0,\infty)$, such that for every
solution of the equation $(L^{\kappa}-z)f = 0$ the integral $\int
_{c}^{r}|f(y)|^2\rrho(y)\,dy$ is finite, then we say that $r$ is of
\textit{limit-circle type}. An analogous notation applies to the
boundary point~$0$.\vspace*{-2pt}
\end{definition}

A fundamental result in the theory of Sturm--Liouville operators is the
so called Weyl-alternative, which states that exactly one of the above
situations holds and that the limit-point/limit-circle classification
is independent of $z \in\mathbb{C}$ [see \citet{JR76}]. Moreover if
we are in the limit-point case at $r$, then for every $z \in\mathbb
{C}\setminus\mathbb{R}$ there exists exactly one solution of the
equation $(L^{\kappa}-z)f = 0$ which satisfies $\int_c^r|f(s)|^2
\rrho(y)\,dy< \infty$. Roughly limit-circle case at a boundary point
$r$ means that we have to specify boundary conditions at $r$ in order
to get a self-adjoint realization, whereas in the limit-point case at
$r$ no boundary conditions at $r$ are necessary.\vspace*{-2pt}

\subsection{Quasi-limiting and quasi-stationary behavior} \label{sec:quasi}
We say that $X_t$ \textit{converges from the initial distribution $\nu
$ to the quasistationary distribution $\varphi$ on compacta} if for
any positive $z$, and any Borel $A \subset[0,z]$
\[
\lim_{t\rightarrow\infty} \mathbb{P}_{\nu} ( X_t \in A |
X_t \leq z) =\frac{\int_A\varphi(y) \rrho(y)\,dy}{\int_0^{z}\varphi
(y) \rrho(y)\,dy};
\]
$X_t$ \textit{converges from the initial distribution $\nu$ to the
quasistationary distribution $\varphi$} if $\int_0^{\infty}\varphi
(y) \rrho(y)\,dy<\infty$, and for any Borel subset $A \subset
[0,\infty)$
\[
\lim_{t\rightarrow\infty} \mathbb{P}_{\nu} ( X_t \in A |
\tau_{\partial} > t) =\frac{\int_A\varphi(y) \rrho(y)\,dy}{\int
_0^{\infty}\varphi(y) \rrho(y)\,dy}.
\]
Finally we say that $X_t$ \textit{escapes from the initial
distribution $\nu$ to infinity} if
\[
\lim_{t \rightarrow\infty} \mathbb{P}_{\nu} ( X_t \leq z |
\tau_{\partial}> t  ) = 0.\vspace*{-2pt}
\]

\begin{remark}\label{qsl}
In the literature there is no completely standard terminology for
quasistationary distributions. The probability measure $\frac{\varphi
(y) \Rho(dy)}{\int_0^{\infty}\varphi(y) \rrho(y)\,dy}$ described
here is sometimes also called a quasi-limiting distribution.
A~quasistationary distribution $\tilde{\nu}$ is often defined as a
probability measure $\tilde{\nu}$ supported in $(0,\infty)$ satisfying
\[
\mathbb{P}_{\tilde{\nu}} (X_t \in A |\tau_{\partial} >
t ) = \tilde{\nu}(A)  \qquad \forall\mbox{ Borel sets } A\subset
(0,\infty), t > 0.\vadjust{\goodbreak}
\]
Quasilimiting distributions are also called Yaglom limits. It is not
difficult to see that quasilimiting distributions are also
quasistationary distributions.\vspace*{-2pt}
\end{remark}

\subsection{Previous results} \label{sec:previous}
Observe that we have in equation \eqref{possolutions} that $\varphi
(\lambda_0^{\kappa},\cdot)$ is positive. \citet{quasistat} showed
a slightly weaker version of the following result. Their additional
assumptions concerning the $b$ and $\kappa$ are easily seen to be unnecessary.\vspace*{-2pt}
%
\begin{theorem}[{[Theorem 3.3 in \citet{quasistat}]}]\label{Thm33}
Assume that $\infty$ is a natural boundary point and that we are in
the limit-point case at $\infty$. Suppose that either
\[
\liminf_{x \rightarrow\infty}\kappa(x) > \lambda_0^{\kappa}
\qquad \mbox{or} \qquad  \limsup_{x\rightarrow\infty}\kappa(x) < \lambda
_0^{\kappa}.
\]
Then either $X_t$ converges to the quasistationary distribution
$\varphi(\lambda_0^{\kappa},y)\,d\gamma(y)/\break\int_0^{\infty}\varphi
(\lambda_0^{\kappa},y)\,d\gamma(y)$, or $X_t$ escapes to infinity. In
the case $\liminf\kappa(x) > \lambda_0^{\kappa}$, $X_t$~converges
to the quasistationary distribution $\varphi(\lambda_0^{\kappa
},\cdot)$ if and only if\break $\int_0^{\infty}\varphi(\lambda_0^{\kappa
},y) \times\rrho(y)\,dy$ is finite.\vspace*{-2pt}
\end{theorem}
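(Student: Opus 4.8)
The plan is to convert the conditioned law into the self-adjoint semigroup and read off its large-time asymptotics. Writing the compactly supported initial law as $\nu(dx)=h(x)\rrho(x)\,dx$ with $h\ge 0$, self-adjointness together with the Feynman--Kac representation \eqref{FKrepresent} gives $\P_\nu(X_t\in A,\,\tp>t)=\langle e^{-tL^{\kappa}}h,\indic_A\rangle=\int_A (e^{-tL^{\kappa}}h)(y)\,\rrho(y)\,dy$ for bounded $A$, so the conditioned law at time $t$ has $\Rho$-density proportional to $w_t:=e^{t\lkap}e^{-tL^{\kappa}}h$; the factor $e^{t\lkap}$ is cosmetic but keeps the compact part from vanishing. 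Since $h\ge 0$, Lemma \ref{L:supportbase} shows that the bottom of the support of its spectral measure is exactly $\lkap$, so Lemma \ref{L:sqrtcompare} in the form \eqref{E:sqrtcompare2} bounds $w_t$ uniformly on compacta, uniformly in $t>1/(2\lkap)$.

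I would then prove a ratio-limit statement: $w_t\to c\,\varphi(\lkap,\cdot)$ locally uniformly for some $c\ge 0$. The local bound from Lemma \ref{L:sqrtcompare}, combined with the elliptic Harnack inequality, yields equicontinuity on compacta, so any sequence $t_n\to\infty$ has a locally uniformly convergent subsequence whose limit is a nonnegative solution of $(L^{\kappa}-\lkap)w=0$ satisfying boundary condition \eqref{E:fellerbound} at $0$; by \eqref{possolutions} such a limit is a nonnegative multiple of $\varphi(\lkap,\cdot)$. Feeding this shape into Theorem \ref{generallocalMandl} identifies the conditioned law on any $[0,z]$ as $\varphi(\lkap,\cdot)$ normalized there, and dominated convergence (using the uniform-on-compacta bound) gives $m_z(t):=\int_0^z w_t\,d\Rho\to c\int_0^z\varphi(\lkap,\cdot)\,d\Rho$.

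With the shape pinned down, the theorem reduces to comparing the total mass $M(t):=\izf w_t\,d\Rho$ with the compact mass. Setting $p_z:=\lim_t m_z(t)/M(t)$ (subsequentially), local convergence forces $p_z=P\,\int_0^z\varphi\,d\Rho\big/\izf\varphi\,d\Rho$ with $P:=\lim_z p_z\in[0,1]$. If $\varphi(\lkap,\cdot)$ is \emph{not} integrable, then $\int_0^z\varphi\,d\Rho\to\infty$, and since $p_z\le 1$ we must have $p_z\equiv 0$: the process escapes (this recovers the elementary ``only if'' direction, \emph{cf.} \cite{quasistat}). If $\varphi(\lkap,\cdot)$ \emph{is} integrable, everything hinges on whether $P=1$, i.e. no mass is lost to infinity; thus I must establish tightness of $\{w_t\,d\Rho\}$, which simultaneously rules out the intermediate case $0<P<1$ and gives convergence to the quasistationary distribution.

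The tightness estimate is where the hypothesis $\liminf_{x\to\infty}\kappa(x)>\lkap$ enters, and it is the main obstacle. Choosing $z$ so large that $\kappa_z:=\inf_{x\ge z}\kappa(x)>\lkap$, the tail quadratic form on $[z,\infty)$ dominates $\kappa_z\|\cdot\|^{2}$, so the bottom $\mu_z$ of the spectrum of $L^{\kappa}$ on $[z,\infty)$ with a Dirichlet condition at $z$ satisfies $\mu_z\ge\kappa_z>\lkap$. A last-exit (Duhamel) decomposition at level $z$ then expresses the tail mass $\int_z^\infty w_t\,d\Rho$ as an integral over the last crossing time $s$ of a flux $\propto w_s(z)$ against the tail survival weight, which decays like $e^{-\mu_z(t-s)}$; since $\mu_z>\lkap$ and $w_s(z)$ is uniformly bounded by Lemma \ref{L:sqrtcompare}, this yields a bound $\int_z^\infty w_t\,d\Rho\le C(z)/(\mu_z-\lkap)$ uniform in $t$, with $C(z)\to 0$ as $z\to\infty$ when $\varphi(\lkap,\cdot)$ is integrable. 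Hence $\sup_t\int_z^\infty w_t\,d\Rho\to 0$, giving tightness, $M(t)\to c\izf\varphi\,d\Rho<\infty$, and $P=1$. The delicate points are justifying the decomposition on the half-line $[z,\infty)$ (controlling $w_t$ at $\infty$ via the limit-point property, equivalently identifying $\mu_z$ with the decaying $\Rho$-integrable solution of $(L^{\kappa}-\lkap)V=0$) and the uniform-in-$t$ control of the boundary flux $w_s(z)$; these, rather than the soft dichotomy, are the genuine work.
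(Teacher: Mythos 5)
You are proving a statement that this paper does not actually prove: Theorem \ref{Thm33} is imported verbatim from \cite{quasistat}, so the only in-paper material to compare against is the machinery that reproduces pieces of it (Lemma \ref{qsdcomp}, Theorem \ref{generallocalMandl}, Proposition \ref{ratiolimit} for the local shape; Lemma \ref{SEcomb} for the dichotomy; Lemmas \ref{L:L1L2} and \ref{integrability} for the high-killing case). Your first half --- passing to $w_t=e^{t\lkap}e^{-tL^{\kappa}}h$, using Lemmas \ref{L:supportbase} and \ref{L:sqrtcompare} for locally uniform bounds, and identifying every locally uniform subsequential limit as $c\,\varphi(\lkap,\cdot)$ --- is sound and is essentially the route the paper takes to convergence on compacta (one caveat: you need the parabolic ratio-limit input, as in Proposition \ref{ratiolimit}, to know the subsequential limit is time-independent and that $c$ does not depend on the subsequence).

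The genuine gap is that your argument never establishes the dichotomy in the case $\limsup_{x\to\infty}\kappa(x)<\lkap$, which is half of the theorem and is precisely the half the present paper relies on (via Lemma \ref{SEcomb}) in Theorems \ref{qsddrift} and \ref{Escape}. Your only mechanism for upgrading ``shape on compacta'' to ``no splitting of mass'' is the tightness estimate, and that estimate is structurally tied to choosing $z$ with $\inf_{x\ge z}\kappa(x)>\lkap$; when $\limsup\kappa<\lkap$ no such $z$ exists, tightness is in general false (the process can escape, Theorem \ref{Escape}), and your soft subsequential bookkeeping leaves both $0<P<1$ and subsequence-dependence of $P$ entirely open. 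The idea that closes this case in \cite{quasistat} --- and which the paper reproduces as Lemma \ref{SEcomb} --- is absent from your proposal: one shows $a(\nu,r)=\lim_t \P_{\nu}(\tp>t+r\mid\tp>t)$ exists, is multiplicative and hence exactly exponential, and simultaneously equals the mixture \eqref{formelfuera} of the interior survival profile and $e^{-Kr}$; exact exponentiality of a nontrivial mixture is impossible precisely because $K\ne\lkap$, forcing $F(\nu,\mathbb{R}_+)\in\{0,1\}$. Secondarily, even in the case $\liminf\kappa>\lkap$ your claim that the flux constant satisfies $C(z)\to 0$ does not follow from Lemma \ref{L:sqrtcompare}: the constant $C_{\alpha}(z)$ there involves $\bigl(\int_0^z\rrho(y)^{-1}dy\bigr)^{1/2}$ and \emph{grows} in $z$, and the tail Green mass $\int_z^{\infty}\mathbb{E}_y[e^{-\varepsilon T_z}]\,\rrho(y)dy$ is only bounded by $\varepsilon^{-1}$, not small; to make the tail go to zero you need the boundary flux dominated by the decaying eigenfunction, $w_s(z)\lesssim u_{\lkap}(z)$, which is exactly the optional-sampling estimate \eqref{optionalsampling} in the proof of Lemma \ref{integrability} and would have to be proved, not assumed.
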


A priori it would not have been clear that the conditional distribution
converges, and that the mass cannot split, with part of the mass
remaining on a compact interval and the remainder escaping to infinity.
Having recognized that there is a a dichotomy, it is natural to then
seek a simple criterion for discriminating between the cases: escape or
convergence. One such is given in \citet{quasistat}, under which
$X_t$ converges to quasistationarity, namely when $\lambda_0^{\kappa}
< K =: \lim_{t \rightarrow\infty}\kappa(t)$ together with the
growth bound
{\renewcommand{\theequation}{$\mathit{GB}'$}
\begin{equation}\label{eqGB1}
\exists\tilde{b}, \tilde{\kappa}\geq0\
\forall y \mbox{ large enough: } |b(y)| \leq\tilde{b}y   \mbox{
and }  \kappa(y)\leq\tilde{\kappa}y
\end{equation}}

\vspace*{-\baselineskip}

\noindent or the related bound
{\renewcommand{\theequation}{$\mathit{GB}''$}
\begin{eqnarray}
 &&\exists\bar{b}_1, \bar{b}_2, \bar{\kappa},
\beta\geq0  \ \forall y \mbox{ large enough: }\bar{b}_1y^{\beta
}\geq b(y)\geq-\bar{b}_1y, b'(y)\geq-\bar{b}_2y^2\nonumber\hspace*{-35pt}\\[-8pt]\\[-8pt]
 && \hphantom{\exists\bar{b}_1, \bar{b}_2, \bar{\kappa},
\beta\geq0  \ \forall y \mbox{ large enough: }}\mbox{and } \kappa(y)\leq
\tilde{\kappa}y.\nonumber\hspace*{-35pt}
\end{eqnarray}}

\vspace*{-\baselineskip}

\noindent While these conditions are satisfied in many applications they are,
from a~theoretical point of view, unsatisfactory. In particular, it
seems peculiar that an upper bound on the killing rate as in (\ref{eqGB1})
should be necessary. On the contrary increasing the killing rate
$\kappa$ should, from a heuristic point of view, only strengthen the
convergence to quasistationarity.\vspace*{-2pt}
%
\begin{remark}
We make use of Theorem \ref{Thm33} only in the case
\[
\lambda
_0^{\kappa} > \lim_{x \rightarrow\infty}\kappa(x)
\]
and $\Rho ((0,\infty)) < \infty$. In the other cases we use different
techniques. In the next chapter we will show that $\infty$ is always
in the limit-point case. As emphasized and explained in \citet{quasistat} in this case the heuristic behind Theorem~\ref{Thm33} is
quite clear, but the translation of this idea into formal mathematics
is not trivial.\vadjust{\goodbreak}
\end{remark}

\section{Analytic results}
In this chapter we derive several key analytic facts about the spectra
of generators and resolvents. While some of these are standard in the
theory of Sturm--Liouville operators, and well known to specialists in
that field, they are less familiar to probabilists, and we explain them
in some detail here. In Section \ref{sec:FW} we show that the
technical conditions for a limit-point boundary at~$\infty$ may be
weakened. Section \ref{sec:SLspec} derives basic results linking the
spectrum and speed measure. Section \ref{sec:LPHI} presents the
standard parabolic Harnack inequality in the form that we will be
using. Section \ref{sec:SRLT} applies the analytic results to
convergence on compacta. Section~\ref{sec:ID} explains why strong
conditions on the initial conditions are unnecessary. Finally, Section
\ref{sec:entrance} generalizes the results to the case of an entrance
boundary at $\infty$.

\subsection{Classification of boundary points} \label{sec:FW}
We start by establishing a connection between the Feller classification
and the Weyl classification of boundary points. This has already been
investigated in \citet{nW85} for the case $\kappa= 0$, but in this
work the author introduces the notion of weak entrance boundary and
shows that one is in the limit-circle case if the boundary point is of
weak entrance type. We show that there are no weak entrance boundaries
at $\infty$ by proving that $\infty$ is in the limit-point case. The
proof we give is well known in the Schr\"{o}dinger case [see \citet{BMS02} for similar ideas in a much more general context]. We assume
regularity of the coefficients of the Sturm--Liouville expression,
although weaker assumptions would also suffice.
%
\begin{lemma} \label{L:SL}
Let the Sturm--Liouville expression $\tau f(x) = -\frac{1}{\rrho
(x)} (\rrho(x)\times f'(x) )' + \kappa(x)f(x)$ be given. Assume
that $\rrho$ is strictly positive and locally Lipschitz in $(0,\infty
)$ and $\kappa\in\mL^2_{loc}([0,\infty))$ such that $\kappa(x)
\geq-C|x|^2+D$ for some constants $C,D\geq0$. Then we are in the
limit-point case at $\infty$.
\end{lemma}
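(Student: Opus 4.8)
The plan is to argue by contradiction, using the fact (Weyl's alternative, recalled above) that the limit-point/limit-circle classification at $\infty$ is independent of the spectral parameter. So I fix a real number $\lambda$ and suppose, towards a contradiction, that $\infty$ is of limit-circle type; then \emph{every} solution of $(\tau - \lambda)u = 0$ is square-integrable against $\rrho\,dx$ near $\infty$. Writing the equation in the form $(\rrho u')' = (\kappa - \lambda)\rrho u$, I choose two real linearly independent solutions $u_1,u_2$, observe that the modified Wronskian $\rrho(u_1 u_2' - u_1' u_2)$ is a nonzero constant, and normalise it to $1$. Setting $\eta := (u_1^2 + u_2^2)^{1/2}$, I then have $\int_{x_0}^{\infty}\eta^2\rrho\,dx =: M < \infty$ for some $x_0>0$, and $\eta>0$ everywhere since the Wronskian does not vanish.

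The key algebraic step is to show that $\eta$ solves a weighted Ermakov--Pinney equation. Combining $(\rrho u_i')' = (\kappa - \lambda)\rrho u_i$ with the Lagrange identity $\rrho^2(u_1^2+u_2^2)(u_1'^2+u_2'^2) = \bigl(\rrho(u_1 u_1' + u_2 u_2')\bigr)^2 + \bigl(\rrho(u_1 u_2' - u_1' u_2)\bigr)^2$ and the normalisation of the Wronskian, a direct computation (carried out in the a.e./weak sense, which is all the regularity of $\rrho$ and $\kappa$ permits) gives
\[
(\rrho\,\eta')' = (\kappa - \lambda)\rrho\,\eta + \frac{1}{\rrho\,\eta^3}.
\]
The point of this identity is that the Wronskian reappears as a strictly positive source term $1/(\rrho\eta^3)$, and it is precisely this term that will clash with the assumed integrability of $\eta^2\rrho$.

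The contradiction then comes from testing the Pinney identity against the nonnegative function $\zeta_R^2\,\eta/x^2$, where $\zeta_R$ is a Lipschitz cutoff equal to $1$ on $[x_0,R]$, vanishing on $[2R,\infty)$, with $|\zeta_R'|\le 2/R$. The decisive choice is the weight $x^{-2}$, tuned exactly to the borderline growth permitted by the hypothesis $\kappa\ge -Cx^2 + D$. After one integration by parts (the cutoff kills all boundary terms) the left-hand side is bounded \emph{above}, uniformly in $R$: the genuinely negative energy term $-\int \zeta_R^2\rrho(\eta')^2 x^{-2}$ only helps, while the two cross terms are absorbed by Young's inequality into this energy term plus remainders controlled by $M$ (the cutoff remainder over $[R,2R]$ being $O(M/R^4)$, hence vanishing). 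On the right-hand side, choosing $x_0$ large enough that $x^{-2}(\kappa-\lambda)\ge -C'$ gives $\int \zeta_R^2(\kappa-\lambda)\rrho\eta^2 x^{-2} \ge -C'\int_{x_0}^{\infty}\rrho\eta^2 \ge -C'M$, so the potential term is bounded below; meanwhile the Pinney source yields $\int_{x_0}^{R}(x^2\rrho\eta^2)^{-1}\,dx \ge (\log(R/x_0))^2/M$ by Cauchy--Schwarz applied to $\int_{x_0}^{R}x^{-1}\,dx$. Hence the right-hand side tends to $+\infty$ with $R$ while the left-hand side stays bounded, a contradiction; so $\infty$ must be of limit-point type.

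I expect the main obstacle to be exactly the choice of the weight $x^{-2}$ and the verification that it simultaneously (i) renders the potential term harmless and (ii) leaves the Pinney source logarithmically divergent; this is where the sharpness of the $-Cx^2$ threshold is felt, since any faster-growing lower bound would destroy the divergence and indeed permit limit-circle behaviour. A secondary technical point is to carry out the derivation of the Pinney identity and all integrations by parts in the weak sense, since $\rrho$ is only locally Lipschitz and $\kappa$ only locally square-integrable, so that $\eta$ is merely $C^1$ with $\rrho\eta'$ absolutely continuous.
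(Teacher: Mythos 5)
Your argument is correct, but it takes a genuinely different route from the paper's. The paper proves the lemma by operator extension theory: it observes that limit-circle at $\infty$ would force the minimal operator to have deficiency indices $(2,2)$, so that its maximal symmetric extensions are two-dimensional, and then derives a contradiction by showing that the restriction $T_{D}$ of the maximal operator with only a Dirichlet condition at $0$ (a three-dimensional extension) is already symmetric. The symmetry is verified by an integration by parts against cutoffs $\varphi_k(x)=\varphi(x/k)$, where the hypothesis $\kappa\ge -Cx^2$ is used to show $\int\varphi_k^2|f'|^2\,\rrho\,dx=O(k^2)$, so that the cross term $\int\varphi_k\varphi_k'\bar f f'\rrho\,dx$ vanishes as $k\to\infty$. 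You instead work directly with solutions of the eigenvalue ODE: assuming limit-circle, you form $\eta=(u_1^2+u_2^2)^{1/2}$ for a Wronskian-normalised real fundamental system, derive the weighted Ermakov--Pinney equation (your computation via the Lagrange identity is correct, and $\rrho\eta'$ is indeed locally absolutely continuous, so the identity holds a.e.\ and the integrations by parts are legitimate), and play the logarithmic divergence of $\int(x^2\rrho\eta^2)^{-1}dx$ forced by the Wronskian source against the boundedness of everything else. Both proofs exploit $\kappa\ge -Cx^2$ at exactly the same borderline, and your dyadic remainder $O(M/R^4)$ is the continuous analogue of the paper's $k^{-1}\cdot k$ cancellation; but yours is a purely one-dimensional, ODE-level argument (essentially the classical Sears--Titchmarsh/Hartman limit-point criterion) that avoids deficiency-index theory entirely, whereas the paper's argument is the one that survives in higher dimensions (it is modelled on the Schr\"odinger-operator proof it cites). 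One cosmetic point: your cutoff $\zeta_R$ as described does not vanish at the left endpoint $x_0$, so the integration by parts produces a boundary term there; it is a constant independent of $R$ and therefore harmless, but you should either say so or let $\zeta_R$ vanish below $x_0$.
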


\begin{pf}
We can assume, without loss of generality, that $D=0$ and that~$\rrho$
is continuous up to the boundary. The first assumption is obviously
harmless. If $\rrho$ is not continuous up to zero we can consider the
differential expression in $(1,\infty)$ instead of $(0,\infty)$. This
shift does not change the Weyl-classification of $\tau$ at infinity.
Similarly, we may assume that the boundary condition at 0 is Dirichlet
($\alpha=\infty$), since the classification at infinity is unaffected
by the boundary condition at 0.

As usual in the theory of Sturm--Liouville operators we define the
maximal operator $T$ and the minimal operator $\widetilde{T}$
associated to the differential expression $\tau$ as
\begin{eqnarray*}
\mathcal{D}(T)&:=& \lbrace f \in\mL^2 | f, \rrho f' \mbox{
absolutely continuous in $(0,\infty)$, } \tau f \in\mL^2
\rbrace,
\\
Tf&:=&\tau f \quad \mbox{for $f \in\mathcal{D}(T)$}
\end{eqnarray*}
and
\begin{eqnarray*}
\mathcal{D}(\widetilde{T})&:=& \lbrace f \in\mathcal{D}(T)|
  f\mbox{ has compact support in $(0, \infty)$} \rbrace,\\
\widetilde{T}f&:=& \tau f \quad \mbox{for $f \in\mathcal{D}(\widetilde{T})$},
\end{eqnarray*}
respectively. Let $T_{D}$ be the restriction of the maximal operator
$T$ to the domain
\[
\mathcal{D} =  \lbrace f \in\mathcal{D}(T) | f(0) = 0
\rbrace,
\]
that is, we put Dirichlet boundary conditions at the boundary point $0$.

The deficiency indices (we refer to the short summary in the \hyperref[app]{Appendix})
of~$\widetilde{T}$ are $(1,1)$ if the limit-point case holds at
$\infty$, and $(2,2)$ if limit-circle holds at $\infty$. In the
former case, the maximal symmetric (self-adjoint) extensions of
$\widetilde{T}$ are one-dimensional; in the latter case, they are
two-dimensional. If~$T_{D}$ defines a symmetric operator---$\langle
f,T_{D}f\rangle\in\mathbb{R}$ for every $f \in\mathcal{D}$---then
it cannot have dimension higher than 2. But there is one free parameter
at 0; in the limit-circle case there would be two free parameters at
$\infty$. Thus, in the limit-circle case $T_{D}$ would be a
three-dimensional extension of $\widetilde{T}$, so it could not be
symmetric. If we show that $T_{D}$ is symmetric, it will follow that
the Sturm--Liouville problem is in the limit-point case at $\infty$.

Let $\varphi\in C^{\infty}_c(\mathbb{R})$ such that $0\leq\varphi
\leq1$ and
\[
\varphi(x) =
\cases{
1, & \quad if $|x|\leq1$,\cr
0, & \quad if $|x| \geq2$.
}
\]
Further we set $\varphi_k(x) = \varphi(\frac{x}{k})$ ($k \in\mathbb
{N}$). This gives, for $f \in\mathcal{D}(T_{D})$ and $k \in\mathbb{N}$
%
\begin{eqnarray}\label{intparts1}
\langle f,T_{D}f\rangle&=& \lim_{k\rightarrow\infty}\int_0^{\infty
}\varphi_k(x)^2\overline{f(x)}T_{D}f(x) \rrho(x)\,dx \nonumber \\
&=& \lim_{k\rightarrow\infty} \int_0^{\infty}\varphi
_k(x)^2\overline{f(x)} \biggl[-\frac{1}{2\gamma}(\gamma f')'(x)+
\kappa(x)f(x) \biggr]\gamma(x)\,dx \\
&=& \lim_{k\rightarrow\infty} \biggl[\frac{1}{2}\int_0^ {\infty
} (\varphi_k(x)^{2}\overline{f(x)} )'f'(x)\rrho(x)\,dx \nonumber\\
&&\hphantom{\lim_{k\rightarrow\infty} \biggl[}
{} + \int_0^{\infty}\varphi_k(x)^{2}\kappa
(x)|f(x)|^2\rrho(x)\,dx \biggr]\nonumber\\
&=& \lim_{k\rightarrow\infty}  \biggl\lbrace\int_0^{\infty}\varphi
_k^2(x) \biggl(\frac{1}{2}|f'(x)|^2 + \kappa(x)|f(x)|^2 \biggr)\,\rrho
(x)\,dx \nonumber \\
&&\hspace*{69pt}{} + \int_0^{\infty}\varphi_k(x)\varphi_k'(x)\overline
{f(x)}f'(x)\rrho(x)\,dx \biggr\rbrace.\nonumber
\end{eqnarray}
Observe that in the third line the boundary term $\varphi_k^{2}\bar
{f}\rrho f'|_0^{\infty}$ coming from the integration by parts
vanishes, since $\rrho f$ is continuous up to $0$ (since it satisfies
an ODE), $f(0)=0$ and $\varphi_k(x)$ is identically zero for $x$ large enough.

The first term on the right-hand side is real, and we have to prove
that the second term converges to $0$ as $k \rightarrow\infty$. We
have, by the Cauchy--Schwarz inequality and the properties of the
cut-off sequence $(\varphi_k)$,
%
\begin{eqnarray}\label{CauchySchwarzself}\qquad
&& \biggl|\int_0^{\infty}\varphi_k(x)\varphi_k'(x)\overline
{f(x)}f'(x)\rrho(x)\,dx  \biggr| \nonumber \\
&&\qquad \leq \biggl(\int_0^{\infty}\varphi_k(x)^2|f'(x)|^2\rrho(x)\,dx\int
_0^{\infty}|\varphi_k'(x)|^2|f(x)|^2 \rrho(x)\,dx \biggr)^{
{1}/{2}}\\
&&\qquad \leq Ck^{-1} \biggl(\int_0^{\infty}\varphi_k(x)^2|f'(x)|^2 \rrho
(x)\,dx\int_k^{2k}|f(x)|^2\rrho(x)\,dx \biggr)^{{1}/{2}}.\nonumber
\end{eqnarray}
For the first integral on the right-hand side we integrate by parts in
a similar vein to \eqref{intparts1}. The assumptions on $\kappa$ as
well as the elementary inequality $|ab| \leq a^{2}/4 + b^2$ imply
\begin{eqnarray*}
&&\frac{1}{2}\int_0^{\infty}\varphi_k(x)^2|f'(x)|^2\rrho(x)\,dx \\
&&\qquad =
\int_0^{\infty}\varphi_k(x)^{2}\overline{f(x)}(T_{D}f(x)) \rrho
(x)\,dx \\
&&\qquad \quad {} - \int_0^{\infty}\varphi_k(x)^2\kappa(x)|f(x)|^2 \rrho
(x)\,dx - \int_0^{\infty}\varphi_k\varphi_k'(x)\overline{f(x)}f'(x)
\rrho(x)\,dx\\
&&\qquad  \leq\int_0^{\infty}\varphi_k(x)|f(x)||T_{D}f(x)| \rrho
(x)\,dx + C \int_0^{\infty}\varphi_k^2x^2|f(x)|^2 \rrho(x)\,dx\\
&&\qquad \quad {}  +\int_0^{\infty}|\varphi_k(x)\varphi
_k'(x)\overline{f(x)}f'(x)| \rrho(x)\,dx\\
&&\qquad  \leq \int_0^{\infty}\varphi_k(x)|f(x)||T_{D}f(x)| \rrho
(x)\,dx + C \int_0^{\infty}\varphi_k^2x^2|f(x)|^2 \rrho(x)\,dx\\
&&\qquad \quad {}  +\int_0^{\infty} \biggl[\frac{1}{4}|\varphi
_k(x)f'(x)|^2+ |\varphi_k'(x)\overline{f(x)}|^{2} \biggr] \rrho
(x)\,dx\\
&&\qquad  \leq \|f\|\|T_{D}f\| + C(2k)^2\|f\|^2\\
& &\qquad \quad {} +\frac{1}{4}\int_0^{\infty}\varphi
_k(x)^2|f'(x)|^2 \rrho(x)\,dx + M^2 k^{-2}\|f\|^2.
\end{eqnarray*}
This yields
\[
\int_0^{\infty}\varphi_k(x)^2|f'(x)|^2 \rrho(x)\,dx \leq4\|f\|\|
T_{D}f\| + 16Ck^2\|f\|^2 + 4 M^2k^{-2}\|f\|^2
\]
and therefore for large $k$
%
\begin{equation}\label{growthink}
\int_0^{\infty}\varphi_k(x)^2|f'(x)|^2\,dx \leq C_1 + C_2k^2 \leq C_3k^{2}.
\end{equation}
Thus inequalities \eqref{CauchySchwarzself} and \eqref{growthink}
imply that [observe that $f \in L^2((0,\infty),\rrho)$]
\begin{eqnarray*}
 \biggl|\int_0^{\infty}\varphi_k(x)\varphi_k'(x)\overline
{f(x)}f'(x) \rrho(x)\,dx \biggr| &\leq& Ck^{-1} \biggl(C_3k^2\int
_k^{2k}|f(x)|^2 \rrho(x)\,dx \biggr)^{{1}/{2}}\\
&\rightarrow&0,
\end{eqnarray*}
as $k\rightarrow\infty$. This proves the assertion, and so completes
the proof.
\end{pf}

\subsection{The spectrum of Sturm--Liouville operators} \label{sec:SLspec}
We begin with a version of the spectral theorem for self-adjoint
operators on a Hilbert space, specifically adapted to Sturm--Liouville
operators. A proof of it can be found in general references on the
theory of Sturm--Liouville or Schr\"{o}dinger operators, such as \citet{GZ06}, \citet{CL90}, \citet{aZ05}.

Let $\tau=-\frac{1}{2\rrho}\frac{d}{dx} (\rrho\frac{d}{dx})
+ \kappa$ be a Sturm--Liouville expression which is regular at $0$ and
in the limit-point case at infinity, and let $H$ be the self-adjoint
realization of $\tau$ in $\mL^2$ with boundary conditions \ref
{E:fellerbound} at $0$. Let $\varphi(z,\cdot)$ be the unique solution
of the ordinary differential equation $\tau\varphi(z,\cdot)=z\varphi
(z,\cdot)$ satisfying $\varphi(z,0)=1/(1+\alpha)$ and $\frac
{1}{2}\varphi'(z,0)=\alpha/(1+\alpha)$.

Given a continuous function $F\in C(\mathbb{R})$ and a $\sigma
$-finite measure $\mu$ on $\mathbb{R}$, we have a corresponding
maximal multiplication operator $M_{F}$ on $\mL^{2}(\mathbb{R}, \mu
)$ defined by
\begin{eqnarray*}
\mathcal{D}(M_{F})&=& \{g\in\mL^{2}(\mathbb{R}, \mu) \mbox{
s.t. }gF\in\mL^{2}(\mathbb{R}, \mu) \},\\
M_{F}(g)&=&Fg.
\end{eqnarray*}

\begin{theorem}[(Weyl's spectral theorem)] \label{weylspectraltheorem}
There exists a measure $\sigma$ whose support is $\Sigma(H)$, such
that the map taking a compactly supported function $h\in\mL
^2((0,\infty),\Gamma) $ to the function $\hat{h}\in\mL^2(\Sigma
(H), \sigma)$, defined by
\[
\hat{h}(\cdot)= \int_0^{\infty}h(x)\varphi(\cdot,x) \rrho(x)\,dx
\]
may be uniquely extended to a unitary mapping $U\dvtx\mL^{2}((0,\infty
),\Rho)\to\mL^{2}(\Sigma(H),\sigma)$ with the property
\[
U F(H)U^{-1} = M_F.
\]
The spectrum of $H$ is simple, and $\Sigma(F(H)) = \operatorname{ess\,
ran}_{\sigma}(F)$.
\end{theorem}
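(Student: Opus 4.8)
The plan is to realize the abstract spectral theorem \eqref{spectraltheorem1}--\eqref{spectraltheorem3} concretely, exhibiting $U$ as the generalized eigenfunction expansion in the solutions $\varphi(\lambda,\cdot)$ via the resolvent kernel and the Weyl--Titchmarsh $m$-function. First I would construct the resolvent $R_z=(H-z)^{-1}$ for $z\in\mathbb{C}\setminus\Sigma(H)$ as an integral operator. Because $\tau$ is regular at $0$ and, by Lemma \ref{L:SL}, in the limit-point case at $\infty$, the equation $(\tau-z)u=0$ has (up to a scalar) a unique solution $\psi(z,\cdot)$ lying in $\mL^2$ near $\infty$, while $\varphi(z,\cdot)$ is the solution pinned by the boundary condition \eqref{E:fellerbound} at $0$. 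Writing $\psi(z,\cdot)=\chi(z,\cdot)+m(z)\varphi(z,\cdot)$ in terms of a fundamental system normalized at $0$ defines the $m$-function, and the Green kernel is $G_z(x,y)=W(z)^{-1}\varphi(z,x\wedge y)\,\psi(z,x\vee y)$, with $W$ the constant Wronskian.

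Next I would record that $m$ is a Herglotz function, i.e. $\operatorname{Im} m(z)/\operatorname{Im} z>0$ on the upper half-plane; this follows from positivity of $\operatorname{Im}\langle R_z h,h\rangle$ together with the $\mL^2$-normalization of $\psi$ at $\infty$. The Herglotz representation then yields a nonnegative Borel measure $\sigma$ on $\mathbb{R}$, recovered from the boundary values of $m$ as $z\to\lambda+i0$ by the Stieltjes inversion formula. This $\sigma$ is the candidate spectral measure, and I would define the transform $h\mapsto\hat h$ on compactly supported $h$ by $\hat h(\lambda)=\izf h(x)\varphi(\lambda,x)\rrho(x)\,dx$ as in the statement.

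The crux is Parseval's identity. For compactly supported $h\in\mL^2$ I would compute $\langle R_z h,h\rangle$ in two ways: directly from the Green kernel, where in the limit-point case the double integral collapses, after Stieltjes inversion, to $\int(\lambda-z)^{-1}|\hat h(\lambda)|^2\,d\sigma(\lambda)$; and abstractly via \eqref{spectraltheorem2} as $\int(\lambda-z)^{-1}\,d\|E^{\kappa}h\|^2(\lambda)$. Matching the two and invoking uniqueness of the representing measure forces $d\|E^{\kappa}h\|^2(\lambda)=|\hat h(\lambda)|^2\,d\sigma(\lambda)$, which is exactly the isometry of $h\mapsto\hat h$ from $\mL^2((0,\infty),\Rho)$ into $\mL^2(\Sigma(H),\sigma)$. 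Extending by density produces the isometry $U$. I would then establish surjectivity, hence unitarity, using the limit-point hypothesis: since only the single solution $\varphi(\lambda,\cdot)$ enters the expansion, the range of $U$ is dense, and the same one-dimensional feature shows the spectrum is simple. For the intertwining, the kernel computation gives $U R_z U^{-1}=M_{(\,\cdot\,-z)^{-1}}$, equivalently multiplication by $\lambda$ for $H$ itself; functional calculus upgrades this to $U\,F(H)\,U^{-1}=M_F$ for all Borel $F$, and then $\Sigma(F(H))=\text{ess\,ran}_{\sigma}(F)$ is the standard spectrum of a multiplication operator.

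I expect the main obstacle to be the rigorous justification of the Stieltjes-inversion matching in the Parseval step: one must control the boundary limit $z\to\lambda+i0$ of the double resolvent integral uniformly enough to extract $\sigma$ and to identify $|\hat h|^2\,d\sigma$ with the abstract spectral measure $d\|E^{\kappa}h\|^2$. A secondary difficulty is the surjectivity argument, which must genuinely invoke limit-point type at $\infty$ to exclude a second family of generalized eigenfunctions that would otherwise enlarge the spectral multiplicity.
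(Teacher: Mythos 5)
The paper does not prove this theorem; it states it and defers to the standard references \cite{GZ06,CL90,aZ05}, and your outline is precisely the classical Weyl--Titchmarsh argument contained in those references: Green kernel and $m$-function, Herglotz property, Stieltjes inversion to produce $\sigma$, Parseval by matching the two expressions for $\langle R_z h,h\rangle$, and the limit-point hypothesis yielding both surjectivity and simplicity of the spectrum. The proposal is correct and identifies the genuine technical points (the boundary-limit control in the inversion step and the role of limit-point type in excluding multiplicity two), so it is in substance the same proof the paper is invoking by citation.
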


The spectrum $\Sigma(A)$ of a self-adjoint operator $A$ may be divided
into two components: the essential spectrum $\Sigma_{\mathrm{ess}}(A)$,
comprising the limit points and eigenvalues of infinite multiplicity,
and the discrete part $\Sigma_d(A)$, comprising the isolated
eigenvalues of finite multiplicity. In the Sturm--Liouville case every
eigenvalue has finite multiplicity (no more than 2), so the essential
spectrum consists only of limit points of the spectrum. It is well
known that the essential part of the spectrum of self-adjoint operators
is invariant with respect to relatively compact perturbations [see
Theorem 9.15 of \citet{jW00}]. [We recall that an operator
$V\dvtx X\rightarrow X$ on the Banach space $X$ is called relatively compact
with respect to $T\dvtx X\to X$ if $\mathcal{D}(T) \subset\mathcal
{D}(V)$, and if for some $z \in\mathbb{C}\setminus\Sigma(T)$ the
operator $V(T-z)^{-1}$ is compact. We refer to Section 9.2 of
\citet{jW00} for further details.]

The core of our results is contained in the following analytic lemma,
which catalogs some of the key linkages among the base of the spectrum,
the scale measure and the speed measure. These take us beyond the
results of Theorem \ref{Thm33}, by separating the influence of the
drift from the effect of the killing term. Moreover, they show clearly
why the case $\lambda_0^{\kappa} < K$ will turn out to be easier than
the case $\lambda_0^{\kappa}>K$. The major results---particularly
Theorems \ref{generallocalMandl}, \ref{limkbiggerev}, \ref{qsddrift}
and \ref{Escape}---will in essence be just unpacking these analytic
results in probabilistic terminology.

\begin{lemma}\label{spectrum}
With the above definitions:
\begin{enumerate}[(vii)]
\item[(i)] if $\lim_{x \rightarrow\infty}\kappa(x) = K$, then
$\Sigma_{\mathrm{ess}}(L^{0,\alpha})+K = \Sigma_{\mathrm{ess}}(L^{\kappa,\alpha
})$;\hypertarget{it:specess1}{}
\item[(ii)] $\lambda_0^{0,\infty} > 0$ and $\int_0^{\infty}\rrho
(x)^{-1}\,dx=\infty$ imply $\Rho(\mathbb{R}_+) =\izf\rrho(x)\,dx <
\infty$; \hypertarget{it:speedfinite}{}
\item[(iii)] $\lambda_0^{0,\alpha} > 0$ and $\Rho ([0, \infty
) ) = \infty$ imply $\lambda_{0}^{0,0}>0$;\hypertarget{it:neumannbottom}{}
\item[(iv)] $\lambda_0^{0,\alpha} > 0$ and $\Rho ([0, \infty
) ) = \infty$ imply $\lim_{r \rightarrow\infty}\frac
{1}{r}\log\Rho([0,r)) > 0$;\hypertarget{it:speedexpon}{}
\item[(v)] if $\lambda_0^{\kappa,\alpha} < \liminf_{x\rightarrow
\infty}\kappa(x)$, then $\lambda_0^{\kappa}$ is a simple isolated
eigenvalue with a unique positive eigenfunction;\hypertarget{it:isolated}{}
\item[(vi)] if $\alpha>0$ (not pure reflection at 0) or if $ \Gamma
([0,\infty)) = \infty$, then 0 is not an isolated eigenvalue of
$L^{0,\alpha}$;\hypertarget{it:noisolated}{}
\item[(vii)] if $\alpha>0$ (not pure reflection at 0) or if $ \Gamma
([0,\infty)) = \infty$, then $\lambda_0^{\kappa} > \limsup_{x
\rightarrow\infty}\kappa(x)$ implies $\lambda_0^{0,\alpha} > 0$.\hypertarget{it:lambda0}{}
\end{enumerate}
\end{lemma}

\begin{pf}
 {Assertion} \hyperlink{it:specess1}{(i)} can be derived from the fact that
the essential spectra of two self-adjoint operators $L_1$ and $L_2$
coincide if for some $z \in\mathbb{C}\setminus(\Sigma(L_1)\cup
\Sigma(L_2))$ the difference
\[
(L_1-z)^{-1}-(L_2-z)^{-1}
\]
is a compact operator; cf. Theorem 9.15 of \citet{jW00}.
Set $\kappa_n(t) =\mathbf{1}_{[0,n]}(t)(\kappa(t)-K)$. The resolvent
equation gives for $z \in\mathbb{C}\setminus\mathbb{R}$
\begin{eqnarray*}
(L^{\kappa_n}-z)^{-1} - (L-z)^{-1} &=& (L^{\kappa
_n}-z)^{-1}(L-L^{\kappa_n})(L-z)^{-1} \\
&=& -(L^{\kappa_n}-z)^{-1}\kappa_n(L-z)^{-1}.
\end{eqnarray*}
Observe now that the operator $\kappa_n(L-z)^{-1}$ is compact; that
is, the operator acting by multiplication with $\kappa_n$ is
relatively compact with respect to the operator $L$. This can be seen
by considering the explicit form of the resolvent [see Chapter~3.3 in
\citet{JR76}; similar results can be found in \citet{CL55}]. We have
\begin{eqnarray*}
&&[\kappa_n(L-z)^{-1}]g(x) \\
&&\qquad = \kappa_n(x) \frac{1}{W(v,u)} \biggl(
v(x)\int_0^x u(y)g(y)\rrho(y)\,dy + u(x)\int_x^{\infty}v(y)g(y)\rrho
(y)\,dy \biggr)
\end{eqnarray*}
where $u$ and $v$ are linearly independent solutions of
%
\begin{eqnarray}
(\tau-z)w = 0 \mbox{ satisfying }\nonumber \\
\eqntext{\displaystyle u(0)=\frac{1}{1+\alpha},   \frac{1}{2}u'(0)=\frac{\alpha
}{1+\alpha} \mbox{ and }
\int_1^{\infty} |v(y)|^2 \rrho(y)\,dy<\infty.}
\end{eqnarray}
Observe that here we use the fact that we are in the limit-point case
at infinity.
The Wronskian $W(f,g)$ of two locally absolutely continuous functions~$f$
and $g$ is defined by
\[
W(f,g)(x) =  [f(x) g'(x)- f'(x)g(x) ]\rrho(x).
\]
Thus $\kappa_n(L-z)^{-1}$ is an integral operator in $\mL^2$ with
kernel $k(\cdot,\cdot)$ given by
\[
k(x,y) =
\cases{
W(v,u)^{-1}\kappa_n(x)v(x)u(y), &\quad  if $ y \leq x$, and \cr
W(v,u)^{-1}\kappa_n(x)v(y)u(x), &\quad  if $ y \geq x$.
}
\]
The known properties of $u$ and $v$ imply
\begin{eqnarray*}
&&\int_0^{\infty}\int_0^{\infty}|k(x,y)|^2 \rrho(y) \rrho(x)\,dy
\,dx \\
&&\qquad = \frac{1}{W(v,u)^2}\int_0^{n} \biggl(|v(x)|^2\int_0^x|u(y)|^2
\rrho(y) \,dy + |u(x)|^2\int_x^{\infty}|v(y)|^2 \rrho(y)\,dy \biggr)\\
&&\qquad \quad \hphantom{\frac{1}{W(v,u)^2}\int_0^{n}}
{}\times|\kappa_n(x)|^{2}\rrho(x)\,dx \\
&&\qquad < \infty.
\end{eqnarray*}
Thus $\kappa_n(L-z)^{-1}$ is Hilbert--Schmidt, hence also compact.

We complete the proof by observing that the resolvent equation
\[
(L^{\kappa_n+K}-z)^{-1} - (L^{\kappa}-z)^{-1} = (L^{\kappa
_n}-z)^{-1}(\kappa- \kappa_n-K)(L^{\kappa}-z)^{-1},
\]
implies
\begin{eqnarray*}
 \|(L^{\kappa_n+K}-z)^{-1} - (L^{\kappa}-z)^{-1} \| &\leq&\|
(L^{\kappa_n}-z)^{-1}\| \|\kappa-\kappa_n-K\|_{\infty} \|(L^{\kappa
}-z)^{-1}\| \\
&\leq&\frac{1}{(\Im z)^2} \|\kappa-\kappa_n-K\|_{\infty}
\rightarrow0
\end{eqnarray*}
as $n\rightarrow\infty$; that is, $L^{\kappa_{n}+K}$ converges in
the norm-resolvent sense to $L^{\kappa}$. In the second inequality we
used the fact that the operator norm of the operator that acts as
multiplication by a function $f$ is just the supremum norm of
$f$.

 {Assertion} \hyperlink{it:speedfinite}{(ii)} is contained in \citet{MSM01}
and also follows from Theorem 1 of the recent work \citet{rP09}.
[In \citet{rP09} somewhat stronger conditions on the drift are imposed,
but these are not actually necessary for the proof.]

 {Assertion} \hyperlink{it:neumannbottom}{(iii)}: Because $L^{0,\alpha}$ and
$L^{0,0}$ differ only in their (one-dimensional) boundary conditions,
the difference $(L^{0,0}+1)^{-1}-(L^{0,\alpha}+1)^{-1}$ has
one-dimensional range, so it is compact. For more details, see
\citeauthor{jW00} (\citeyear{jW00}), Satz~10.17. The bottom of the essential spectrum of $L^{0,0}$
is then strictly positive, since it coincides with the bottom of the
essential spectrum\break of~$L^{0,\alpha}$, hence is above the bottom of the
full spectrum of $L^{0,\alpha}$. If $\lambda_0^{0,0}:=\inf\on
{spec}(L^{0,0})=0$, then $\lambda_0^{0,0}=0$ is necessarily an
isolated eigenvalue of the operator $L^{0,0}$. Let us assume that
$\lambda_0^{0,0} = 0$. The unique (up to positive multiples)
nontrivial and nonnegative eigenfunction $v_{N} \in\mL^2$
associated to $\lambda_0^{0,0}=0$ therefore solves the boundary value problem
\[
L^{0,0} v_{N} = \lambda_0^{0,0} v_{N} = 0,  \qquad   v_{N}(0) =\frac
{1}{1+\alpha} \quad \mbox{and}\quad   \frac{1}{2}\frac{dv_{N}}{dx}(0) =
\frac{\alpha}{1+\alpha}.
\]
Since this ordinary differential equation has a unique solution, and
since the constant function $\mathbf{1}$ is also a solution of this
equation, we conclude that $v_N = \mathbf{1}$. Thus $\mathbf{1}\in
\mL^2$, which means that $\Rho ((0,\infty) )<\infty$,
contradicting our assumption that $\Rho$ is infinite. It follows that
$\lambda_0^{0,0}>0$.

 {Assertion} \hyperlink{it:speedexpon}{(iv)} follows from the above and the
work of \citet{lN98}. His result implies that the bottom
of the essential spectrum of the operator~$L^{0,0}$ is bounded above by
$\limsup_{r \rightarrow\infty}\frac{1}{r}\log\Rho((0,r))$. This
is $0$ if the volume growth is subexponential. Since we have already
showed that $\lambda_{0}^{0,0}>0$, the result follows.

 {Assertion} \hyperlink{it:isolated}{(v)}: Assume first that $\lim_{x
\rightarrow\infty}\kappa(x)$ exists. If $\lambda^{\kappa}_0 < \lim
_{x\to\infty}\kappa(x) = K$, then an application of the result \hyperlink{it:specess1}{(i)} shows that $L^{\kappa} = L + K + (\kappa-K)$ has the
same essential spectrum as $L + K$. Since $L$ is a positive operator,
the bottom of the spectrum of $L + K$, hence a fortiori of the
essential spectrum, has to be at least $K$, hence bigger than $\lkap$,
which implies \hyperlink{it:isolated}{(v)}.

Let us now assume only that $\liminf_{x \rightarrow\infty}\kappa
(x)>\lambda_0^{\kappa}$. By the decomposition principle [see Section
131 in \citet{AG81}]
it is not difficult to see that $L^{\kappa}$ has the same essential
spectrum as the operator $L^{\kappa}_a$ ($a>0$), defined as the
self-adjoint extension of $\tau^{\kappa}$ in $\mL^2((a,\infty),\Rho
)$ satisfying Dirichlet boundary conditions at $a$.
If $a_0>0$ and $\varepsilon>0$ are such that $\inf_{x\geq a_0}\kappa
(x)>\lambda_0^{\kappa}+\varepsilon$ we conclude that
\begin{eqnarray*}
\inf\Sigma_{\mathrm{ess}}(L^{\kappa}) &\geq&\inf\Sigma(L_{a_{0}}^{\kappa})
\\
&\geq&\mathop{\inf_{\varphi\in C^{\infty}_c(a_0,\infty)}}_{\|
\varphi\|_{\mL^2((a_0,\infty),\Rho)}=1} \int_{a_0}^{\infty
}|\varphi'(x)|^2 \rrho(x)\,dx + \int_{a_0}^{\infty}\kappa
(x)|\varphi(x)|^2\rrho(x)\,dx \\
&\geq&\lambda_0^{\kappa}+\varepsilon.
\end{eqnarray*}

 {Assertion} \hyperlink{it:noisolated}{(vi)}: Let $f,g$ be continuous
functions, nonnegative and not identically zero, with compact support
on $(0,\infty)$, and $\lambda_{1}:= \inf\Sigma(L^{0,\alpha
})\setminus\{0\}$, where $\alpha>0$. Suppose 0 is an isolated
eigenvalue, so that $\lambda_{1}>0$. Then
\begin{eqnarray*}
 \langle f, e^{-tL^{0,\alpha}}g \rangle&=& \int_{\Sigma
(L^{0,\alpha})} e^{-t\lambda}\,d \langle f, E g \rangle
(\lambda)\\
&=&  \langle f, E^{0,\alpha}(\{0\}) g \rangle+ \int
_{\lambda_{1}}^{\infty} e^{-t\lambda} \,d \langle f, E^{0,\alpha
} g \rangle(\lambda)\\
&\to&\langle f,u_{0}\rangle\langle g,u_{0}\rangle\qquad \mbox{as } t\to
\infty.
\end{eqnarray*}
This is positive, since $u_{0}$ may be chosen to be strictly positive.

By Lemma \ref{L:sqrtcompare} [observe that $e^{-tL^{0,\alpha}}g \in
\mathcal{D}(L^{0,\alpha})$ according to equation (3)] there is a
constant $C$ such that for all $x\in\on{supp}(g)$,
\begin{eqnarray*}
 |e^{-tL^{0,\alpha}}g(x) |&\le& C \bigl\| \sqrt{L^{0}}
e^{-tL^{0}} g \bigr\|\\
&\le& C \biggl(\int_{0}^{\infty} \lambda e^{-2\lambda t} d\|E^{0,\alpha
}_{\lambda} g\|^{2} \biggr)^{1/2}\\
&\le& C\|g\| e^{-\lambda_{1}t} \qquad \mbox{for $t$ sufficiently large},
\end{eqnarray*}
so that
\[
 \langle f, e^{-tL^{0}}g \rangle\le C\|g\| e^{-\lambda
_{1}t} \int_{0}^{\infty} f(x) \rrho(x)\,dx \to0
\]
as $t\to\infty$, which is a contradiction.

 {Assertion} \hyperlink{it:lambda0}{(vii)}: Suppose first that the limit $K$
of $\kappa(x)$ exists. Intuitively, what we are saying is that when
the mass in a neighborhood of 0 shrinks at a rate faster than~$K$ (what
$\lambda_{0}^{\kappa}$ measures), it is being driven by drift: Either
the mass is being swept down into a region of high killing near 0, or
it is being swept up away from~0. In the latter case, the drift will
still cause the mass near 0 to shrink exponentially in the absence of
killing; in the former case, the killing at 0 will do the job, except
in the case of pure reflection at~0.\looseness=-1

By part \hyperlink{it:specess1}{(i)}, we see that $L^{\kappa} = L + K +
(\kappa-K)$ and $L + K$ have the same essential spectrum. In
particular we conclude that $\inf\Sigma_{\mathrm{ess}}(L) + K = \inf\Sigma
_{\mathrm{ess}}(L+K) \geq\lambda_0^{\kappa}$ and therefore $\inf\Sigma
_{\mathrm{ess}}(L) \geq\lambda_0^{\kappa} - K > 0$, so $\lambda_{0}=0$ would
imply that $0$ is an isolated eigenvalue. Since this is impossible, by
assertion~\hyperlink{it:noisolated}{(vi)}, it follows that $\lambda_0 > 0$. The
extension to the case when the limit does not exist goes exactly the
same way as in the proof of assertion \hyperlink{it:isolated}{(v)} above.
\end{pf}

\begin{remark}
$\!\!$It was shown in \citet{rP09} that conclusion \hyperlink{it:speedfinite}{(ii)}
of Lemma~\ref{spectrum} can be sharpened. Assuming that absorption is
certain, it was shown that
%
\begin{equation}\label{Mu}
\frac{1}{8A(b)} \leq\lambda_0 \leq\frac{1}{2A(b)},
\end{equation}
where
\[
A(b) = \sup_{x>0} \biggl(\int_{x}^{\infty} \rrho(y)\,dy \biggr)
\biggl(\int_0^x\rrho(y)^{-1}\,dy  \biggr).
\]
Related analytic inequalities, which are usually referred to as
weighted Hardy inequalities, can be found in \citet{bM72}. Indeed the
results of Muckenhoupt (\citeyear{bM72}) imply Pinsky's bounds.
\end{remark}

\begin{remark}
The fact that the bottom of the spectrum is an isolated eigenvalue is
also of practical interest, because in this case the associated
eigenfunction can be approximated accurately by the ground states of
regular Sturm--Liouville operators on bounded intervals [see the recent
survey \citet{jW05}]. Such a result has recently been rederived
[\citet{dV09}] in the context of approximating the minimal quasistationary
distribution of a diffusion generator with discrete spectrum via
interacting particle systems of Fleming--Viot type.
\end{remark}

\begin{remark}\label{heatasympt}
Assume that $\lambda_0^{\kappa}$ is an eigenvalue with associated
eigenfunction $u_{\lambda_0^{\kappa}} \in\mL^2$, which by general
theory is strictly positive and simple. Then
%
\begin{equation} \label{E:heatasympt}
\lim_{t\rightarrow\infty}e^{\lambda_0^{\kappa}t}p^{\kappa}(t,x,y)
= c  u_{\lambda_0^{\kappa}}(x)u_{\lambda_0^{\kappa}}(y),
\end{equation}
where $c$ is a normalizing constant. This was proved in \citet{bS93}
for the transition function of Brownian motion on Riemannian manifolds
but the proof carries over without essential changes to our case.
\end{remark}

We will also make use of the following result which is a special case
of Theorem~3.1 in \citet{quasistat}.
%
\begin{lemma}[{[Theorem 3.1 in \citet{quasistat}]}]\label{qsdcomp}
Let $0 \leq f \in\mL^2$ with compact support $\operatorname{supp}(f) \subset
[0,\infty)$ be given, and let $\nu_f$ denote the measure $f(x)\rrho
(x)\,dx$. Let $L^{\kappa}$ be as in Lemma \ref{spectrum} and let
$p^{\kappa}(t,\cdot,\cdot)$ denote the integral kernel of
$e^{-tL^{\kappa}}$. Then for arbitrary measurable bounded sets $A, B
\subset(0,\infty)$
\[
\lim_{t \rightarrow\infty}\frac{\int_0^{\infty}f(x)\int
_Bp^{\kappa}(t,x,y) \rrho(y) \rrho(x)\,dy\,dx}{\int_0^{\infty
}f(x)\int_Ap^{\kappa}(t,x,y) \rrho(y)\rrho(x)\,dy\,dx} = \frac
{\int_B\varphi(\lambda_0^{\kappa},y) \rrho(y)\,dy}{\int_A\varphi
(\lambda_0^{\kappa},y) \rrho(y)\,dy},
\]
that is, $X_t$ converges from the initial distributions $\frac{\nu
_f}{\int_0^{\infty}f(s) \rrho(ds)}$ on compacta to the
quasistationary distribution $\varphi(\lambda_0^{\kappa},\cdot)$.
\end{lemma}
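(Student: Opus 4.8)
The plan is to reduce the statement to a Tauberian (Laplace-type) asymptotic analysis of the semigroup applied to $f$, carried out through Weyl's spectral transform of Theorem \ref{weylspectraltheorem}. Since $p^{\kappa}(t,\cdot,\cdot)$ is the $\Rho$-symmetric kernel of $e^{-tL^{\kappa}}$, Fubini together with the symmetry $p^{\kappa}(t,x,y)=p^{\kappa}(t,y,x)$ lets me rewrite the numerator as $I_{B}(t):=\int_{B}(e^{-tL^{\kappa}}f)(y)\,\rrho(y)\,dy$ and the denominator as $I_{A}(t)$, so that the claim becomes $I_{B}(t)/I_{A}(t)\to \Phi_{B}(\lkap)/\Phi_{A}(\lkap)$, where I abbreviate $\Phi_{B}(\lambda):=\int_{B}\varphi(\lambda,y)\,\rrho(y)\,dy$. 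Applying the unitary $U$ of Theorem \ref{weylspectraltheorem}, with transform $\hat f(\lambda)=\izf f(x)\varphi(\lambda,x)\,\rrho(x)\,dx\in\mL^{2}(\Sigma(L^{\kappa}),\sigma)$, and using that $e^{-tL^{\kappa}}f$ lies in the domain of all powers of $L^{\kappa}$ (hence is continuous), I obtain the pointwise eigenfunction expansion $(e^{-tL^{\kappa}}f)(y)=\int_{\Sigma(L^{\kappa})}e^{-t\lambda}\hat f(\lambda)\varphi(\lambda,y)\,d\sigma(\lambda)$, and therefore $I_{B}(t)=\int_{\Sigma(L^{\kappa})}e^{-t\lambda}\hat f(\lambda)\Phi_{B}(\lambda)\,d\sigma(\lambda)$.

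Next I would record three facts that make the edge $\lambda=\lkap$ dominant. First, $\Phi_{B}$ is continuous in $\lambda$: by continuous dependence of solutions of the linear ODE \eqref{eigenodequation} on the parameter $\lambda$, the map $\lambda\mapsto\varphi(\lambda,y)$ is continuous locally uniformly in $y$, and $B$ is bounded. Second, by \eqref{possolutions} the solution $\varphi(\lkap,\cdot)$ is strictly positive, so on $\supp(f)$ it is bounded below by a positive constant; by the same uniform continuity $\varphi(\lambda,\cdot)$ remains positive on $\supp(f)$ for $\lambda$ in some interval $[\lkap,\lkap+\delta_{0}]$, whence $\hat f(\lambda)>0$ there (as $f\ge 0$, $f\not\equiv 0$). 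Third, by Lemma \ref{L:supportbase} the spectral measure $d\|E^{\kappa}f\|^{2}=|\hat f|^{2}\,d\sigma$ contains $\lkap$ in its support, so the positive measure $d\mu:=\indic_{[\lkap,\lkap+\delta_{0}]}\hat f\,d\sigma$ also has $\lkap$ in its support. (Throughout I assume, as the ratio requires, that $A$ and $B$ have positive $\Rho$-measure, so that $\Phi_{A}(\lkap),\Phi_{B}(\lkap)>0$.)

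I would then split $I_{B}(t)=N_{B}(t)+F_{B}(t)$ into the near part over $[\lkap,\lkap+\delta_{0}]$ and the far part over $(\lkap+\delta_{0},\infty)$. The far part is controlled by Lemma \ref{L:sqrtcompare}: the function $g:=E^{\kappa}((\lkap+\delta_{0},\infty))f$ satisfies $\lambda_{g}\ge\lkap+\delta_{0}$, and the inner $\lambda$-integral defining $F_{B}$ is exactly $(e^{-tL^{\kappa}}g)(y)$, so \eqref{E:sqrtcompare2} gives $\sup_{y\in B}\lv (e^{-tL^{\kappa}}g)(y)\rv\le C_{B}e^{-t(\lkap+\delta_{0})}$ for a $t$-independent constant $C_{B}$, hence $\lv F_{B}(t)\rv\le C_{B}'e^{-t(\lkap+\delta_{0})}$, and likewise for $A$. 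For the near part, since $\Phi_{A}$ is continuous with $\Phi_{A}(\lkap)>0$ I may shrink $\delta_{0}$ so that $\Phi_{A}\ge c_{A}>0$ on $[\lkap,\lkap+\delta_{0}]$; then $N_{A}(t)\ge c_{A}\int e^{-t\lambda}\,d\mu(\lambda)\ge c_{A}e^{-t(\lkap+\delta)}\mu([\lkap,\lkap+\delta])$ for every $\delta\le\delta_{0}$, with $\mu([\lkap,\lkap+\delta])>0$ because $\lkap\in\supp\mu$. Choosing $\delta<\delta_{0}$ shows $F_{A}(t)/N_{A}(t)\to 0$ and $F_{B}(t)/N_{A}(t)\to 0$. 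Finally the elementary Laplace estimate for the positive measure $\mu$ — write $\Phi_{B}(\lambda)-\Phi_{B}(\lkap)$, split near and away from $\lkap$, and use $\lkap=\inf\supp\mu$ — gives $N_{B}(t)/N_{A}(t)\to\Phi_{B}(\lkap)/\Phi_{A}(\lkap)$. Combining the four pieces yields $I_{B}(t)/I_{A}(t)\to\Phi_{B}(\lkap)/\Phi_{A}(\lkap)$, which is precisely the asserted ratio.

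The main obstacle is the Tauberian step when $\lkap$ sits at the bottom of the continuous spectrum (the non-isolated, $R$-null case), where $N_{A}(t)$ may decay at a rate arbitrarily close to $e^{-t\lkap}$ and no spectral gap is available: the argument must extract the edge behaviour purely from $\lkap=\inf\supp\mu$ and the continuity of $\Phi_{A},\Phi_{B}$, while simultaneously ruling out cancellation in the underlying signed integral $\int e^{-t\lambda}\hat f\,\Phi_{B}\,d\sigma$. Positivity is what saves it — the sign of $\hat f$ near $\lkap$, inherited from $\varphi(\lkap,\cdot)>0$ and $f\ge 0$, converts the relevant portion of the spectral integral into an honest positive measure, and Lemma \ref{L:supportbase} guarantees that this measure does not vanish as $\lambda\downarrow\lkap$.
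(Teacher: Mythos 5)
Your argument is correct and is essentially the paper's own: the paper proves this lemma by deferring to the spectral-representation argument in the proof of Theorem \ref{generallocalMandl}, which uses exactly your decomposition into a truncated spectral integral near $\lkap$ (handled by a Laplace-edge estimate exploiting continuity of $\lambda\mapsto\int_B\varphi(\lambda,y)\,d\Rho(y)$ and positivity of $\varphi(\lambda,\cdot)$ for $\lambda$ near $\lkap$) plus a high-spectrum remainder killed by Lemma \ref{L:sqrtcompare}. Your invocation of Lemma \ref{L:supportbase} to guarantee the near-edge measure is nontrivial is a slightly more explicit version of the same point the paper makes via the positivity of the integrals of $\varphi(\lambda,\cdot)$ on $[\lkap,\lambda_1]$ together with $\lkap\in\supp\sigma$.
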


The above lemma can be proved directly using the spectral
representation for Sturm--Liouville operators. The reader will see the
necessary arguments later in this work in the proof of Theorem \ref
{generallocalMandl}. Our first goal is to extend this result to the
case of general compactly supported initial
distributions~$\nu$.\vadjust{\goodbreak}

We begin by deducing some consequences of Lemma \ref{qsdcomp}.
This will lead to Proposition \ref{ratiolimit}, which is a ``strong
ratio limit theorem.'' Before we start proving the strong ratio limit
theorem we explain another analytic fact which has no direct relation
to spectral theory but which will turn out to be very useful.

\subsection{Local parabolic Harnack inequality} \label{sec:LPHI}
A crucial tool for smoothing analytic information about the transition
kernel between different times and sites is the local parabolic Harnack
inequality, which quite generally holds for second order parabolic
differential equations. One version appropriate to our current purposes
may be found in \citet{gL96}, and states that for fixed $x_0, t_0 \in
(0,\infty)$ and $R>0$ there is a constant $C$ such that for every weak
solution $u$ of $(\partial_t - L^{\kappa})u = 0$ which is
nonnegative in $Q((x_0,t_0),4R) \subset(0,\infty) \times(0,\infty)$,
\[
\sup_{\Theta((x_0,t_0),R/2)} u \leq C\inf_{Q((x_0,t_0),R)} u,
\]
where
\[
Q((x_0,t_0),R) = \bigl\lbrace(x,t) \in\mathbb{R}^2 |\max
\bigl(|x-x_0|,\sqrt{|t-t_0|} \bigr)<R,t<t_0\bigr\rbrace
\]
and $\Theta((x_0,t_0),R) = Q((x_0,t_0-R^2),R)$. As in Theorem 10 of
\citet{eD97} this inequality can be applied to the transition kernel
$p^{\kappa}(t,x,y)$ in order to prove that for every compact $K
\subset(0,\infty)$ and $T>0$ there is a constant $c=c(K,T)> 0$ such
that for $t\geq T$, $x_1,x_2,x_3,x_4 \in K$
%
\begin{equation} \label{E:harnack}
c^{-1}p^{\kappa}(t,x_1,x_2) \leq p^{\kappa}(t,x_3,x_4) \leq
cp^{\kappa}(t,x_1,x_2).
\end{equation}
Moreover the local parabolic Harnack inequality shows that there exists
a~locally bounded function $\zeta\dvtx(0,\infty)\rightarrow(0,\infty)$
such that for every $t \geq1$, $y>0$, and $x,z>0$ satisfying
$|z-x|<\frac{1}{2}\wedge\frac{|x|}{4}$
%
\begin{equation} \label{E:harnack2}
p^{\kappa}(t,x,y) \leq\zeta(x)p^{\kappa}(t+1,z,y).
\end{equation}

\subsection{Strong ratio limit theorem and convergence on compacta}
\label{sec:SRLT}
%
\begin{lemma}\label{weakcomp}
For any fixed $x_0 \in(0, \infty)$ the family of functions
\[
 \biggl\lbrace[0,\infty) \times\mathbb{R}_+ \times\mathbb{R}_+\ni
(t,x,y)\mapsto\frac{p^{\kappa}(t+s,x,y)}{p^{\kappa}(s,x_0,x_0)}\Big|
s \geq1 \biggr\rbrace
\]
is relatively compact in the space $C((0, \infty)^2, \mathbb{R})$ of
real-valued continuous functions on $(0,\infty)^2$, endowed with the
vague topology.
\end{lemma}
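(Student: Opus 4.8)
The plan is to prove relative compactness by appealing to the Arzel\`a--Ascoli theorem: in $C\bigl((0,\infty)^2,\R\bigr)$ equipped with the topology of uniform convergence on compact sets (the ``vague'' topology meant here), a family is relatively compact precisely when it is locally uniformly bounded and locally equicontinuous, with both estimates holding uniformly in the index $s\ge 1$. The whole difficulty is therefore to extract $s$-independent bounds and moduli of continuity for the normalised kernels
\[
 u_{s}(t,x,y):=\frac{p^{\kappa}(t+s,x,y)}{p^{\kappa}(s,x_{0},x_{0})},
\]
and the tool that delivers both, in scale-invariant form, is the local parabolic Harnack inequality of section \ref{sec:LPHI}.

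First I would establish the uniform bound. Fix a compact set $K\subset(0,\infty)$ containing $x_{0}$, together with $x,y$, and a time horizon $t\in[0,t_{\max}]$. Since $t+s\ge 1$ for $s\ge 1$, the Harnack comparison \eqref{E:harnack} (applied with $T=1$ and the four points $x,y,x_{0},x_{0}\in K$) yields a constant $c=c(K)$ with
\[
 p^{\kappa}(t+s,x,y)\le c\,p^{\kappa}(t+s,x_{0},x_{0}).
\]
It then remains to compare the diagonal at time $t+s$ with the diagonal at time $s$. Here I would invoke Weyl's spectral theorem (Theorem \ref{weylspectraltheorem}): the diagonal kernel admits the representation $p^{\kappa}(r,x_{0},x_{0})=\int_{\Sigma(L^{\kappa})}e^{-\lambda r}\varphi(\lambda,x_{0})^{2}\,d\sigma(\lambda)$, and because $\Sigma(L^{\kappa})\subset[\lambda_{0}^{\kappa},\infty)\subset[0,\infty)$, each integrand $e^{-\lambda r}$ is nonincreasing in $r$. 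Hence $r\mapsto p^{\kappa}(r,x_{0},x_{0})$ is nonincreasing, so $p^{\kappa}(t+s,x_{0},x_{0})\le p^{\kappa}(s,x_{0},x_{0})$ for every $t\ge0$. Combining the two displays gives $u_{s}(t,x,y)\le c$ on the chosen compact region, uniformly in $s\ge 1$, which is the desired local uniform bound.

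Next I would treat equicontinuity, which is where the real work lies. For each fixed $y$, the map $(t,x)\mapsto p^{\kappa}(t+s,x,y)$ is a nonnegative weak solution of $(\partial_{t}+L^{\kappa})u=0$, so Moser's theory --- the very Harnack inequality recorded in section \ref{sec:LPHI} --- produces interior H\"older estimates with exponent $\beta\in(0,1)$ and constant depending only on the compact region and the coefficients, not on the height of the solution. After dividing by $p^{\kappa}(s,x_{0},x_{0})$ and using the bound from the previous step, these become $s$-independent H\"older estimates for $u_{s}$ jointly in $(t,x)$. By self-adjointness with respect to $\Rho$ we have $p^{\kappa}(r,x,y)=p^{\kappa}(r,y,x)$, so the same reasoning gives a uniform modulus of continuity in $(t,y)$; the auxiliary comparison \eqref{E:harnack2} can be used to control the effect of small spatial translations. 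Splitting
\[
 |u_{s}(t_{1},x_{1},y_{1})-u_{s}(t_{2},x_{2},y_{2})|
 \le |u_{s}(t_{1},x_{1},y_{1})-u_{s}(t_{2},x_{2},y_{1})|
 +|u_{s}(t_{2},x_{2},y_{1})-u_{s}(t_{2},x_{2},y_{2})|
\]
and bounding the first term by the H\"older estimate in $(t,x)$ and the second by the estimate in $y$ yields joint equicontinuity on compacta, uniformly over $s\ge 1$. The main obstacle is precisely this uniformity in $s$: ordinary parabolic regularity controls a single solution, and only the \emph{scale invariance} of the Harnack inequality, together with the normalisation by the diagonal term and the monotonicity established above, makes the constants independent of the index $s$. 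With local uniform boundedness and local equicontinuity in hand, Arzel\`a--Ascoli furnishes the asserted relative compactness.
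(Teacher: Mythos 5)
Your argument is correct and follows essentially the same route as the paper's proof: a local uniform bound obtained from the parabolic Harnack comparison \eqref{E:harnack} combined with the monotonicity of the on-diagonal kernel $r\mapsto p^{\kappa}(r,x_{0},x_{0})$, local equicontinuity from interior parabolic regularity, and Arzel\`a--Ascoli. The only cosmetic difference is that the paper treats $r_n$ as a solution of the doubled equation $(2\partial_t+L^{\kappa}_x+L^{\kappa}_y)r_n=0$ and invokes Lieberman's Theorem 6.28 for joint equicontinuity, whereas you handle the $x$- and $y$-variables separately via symmetry and a triangle-inequality splitting.
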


\begin{pf}
Let $(s_n)_{n \in\mathbb{N}}$ be a sequence with $1 \leq s_n
\rightarrow\infty$, and set for $t\in[0,\infty)$, $x,y \in(0,
\infty)$
\[
r_n(t,x,y) = \frac{p^{\kappa}(t+s_n,x,y)}{p^{\kappa}(s_n,x_{0},x_{0})},\vadjust{\goodbreak}
\]
where $a \in(0,\infty)$ is fixed. The functions $(t,x,y) \mapsto
r_n(t,x,y)$ ($n\in\mathbb{N}$) are solutions to the parabolic
equation
\[
(2\partial_t + L^{\kappa}_x + L^{\kappa}_y)r_n(t,x,y)=0,
\]
where the operator $L_x^{\kappa}$ and $L_y^{\kappa}$ act as
$L^{\kappa}$ on $x$- and $y$-variable, respectively. By the local
parabolic Harnack inequality (see Section \ref{sec:LPHI}) we conclude
that for each compact set $K \subset(0,\infty)$ there exists a
constant $C_K$ such that for all $n\in\mathbb{N}$, $t \geq0$ and
$x,y,a \in K$
\[
p^{\kappa}(t+s_n,x,y) \leq C_K p^{\kappa}(t+s_n,x_{0},x_{0}).
\]
By general spectral theory it is proved in \citet{eD97} that $r
\mapsto p^{\kappa}(r,x_0,x_0)$ is nonincreasing. Therefore we
conclude that for $t\geq0$ and $x,y \in K$
\[
\frac{p^{\kappa}(t+s_n,x,y)}{p^{\kappa}(s_n,x_{0},x_{0})} \leq C_K.
\]
Theorem 6.28 in \citet{gL96}
shows that the set $\lbrace r_n \mid n \in\mathbb{N}\rbrace$ is
locally uniformly equicontinuous. Therefore by the theorem of
Arzela--Ascoli [\citet{jK55}, Theorem 17], there exists a subsequence
$(r_{n_k})_{k \in\mathbb{N}}$ which converges locally uniformly.
\end{pf}

The proof above is modeled on Theorem 2.2 of \citet{ABT02}. Since
that theorem assumed the operator was critical and the coefficients
were H\"{o}lder-continuous, some modification was required

The analytic core of quasilimiting behavior is the convergence of
ratios of transition kernels, which we state and prove here as
Proposition \ref{ratiolimit}. This will imply convergence to the
quasistationary distribution on compacta, Theorem \ref
{generallocalMandl}. Convergence on the whole state space will then
require a~consideration of the recurrence or transience, to decide
whether most of the mass stays in a compact interval or escapes to infinity.

Results comparing transition probabilities at different times and
sites, in the limit as time goes to infinity, are commonly referred to
as strong ratio limit theorems. Strong ratio limit theorems for certain
branching processes can be found in \citet{AN72}. A proof of the
strong ratio property for certain Markov chains on the integers was
given in \citet{hK95}.

\begin{Prop}\label{ratiolimit}
For any $a \in(0, \infty)$
\[
\lim_{s\rightarrow\infty}\frac{p(t+s,x,y)}{p(s,a,a)} = e^{-\lkap
t}\frac{\varphi({\lambda^ {\kappa}_0},x)\varphi(\lambda^{\kappa
}_0,y)}{\varphi(\lambda^{\kappa}_0,a)\varphi(\lambda^{\kappa}_0,a)}.
\]
\end{Prop}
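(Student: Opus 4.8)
The plan is to combine the compactness furnished by Lemma~\ref{weakcomp} with the convergence-on-compacta of Lemma~\ref{qsdcomp} to pin down all subsequential limits of the ratios, and then to use the parabolic equation satisfied by $p^\kappa$ to extract the exponential time factor; uniqueness of the limit will then upgrade subsequential convergence to full convergence. Throughout I write $p^\kappa$ for the kernel denoted $p$ in the statement.

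First I would fix a sequence $s_n\to\infty$ and set $r_n(t,x,y)=p^\kappa(t+s_n,x,y)/p^\kappa(s_n,a,a)$. By Lemma~\ref{weakcomp} some subsequence converges locally uniformly on $[0,\infty)\times(0,\infty)^2$ to a limit $r$. Each $r_n$ is a nonnegative solution of $(2\partial_t+L_x^\kappa+L_y^\kappa)r_n=0$ and the $r_n$ are locally uniformly bounded, so interior parabolic estimates give convergence of the relevant derivatives along a further subsequence, whence $r$ is again a nonnegative solution of the same equation. Since $r_n(0,a,a)=1$ we get $r(0,a,a)=1$, so $r\not\equiv 0$, and the parabolic Harnack inequality forces $r>0$ throughout $(0,\infty)\times(0,\infty)^2$.

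Next I would identify the spatial profile of $r$. Fix $t\geq 0$ and a nonnegative, compactly supported $f$, and set $g(y):=\int_0^\infty f(x)r(t,x,y)\rrho(x)\,dx$, which is positive since $r>0$ and $f\not\equiv 0$. Dividing $\int f(x)\int_B p^\kappa(t+s_n,x,y)\rrho(y)\rrho(x)\,dy\,dx$ by $p^\kappa(s_n,a,a)$ and passing to the limit (local uniform convergence, compact supports) shows this quantity tends to $\int_B g(y)\rrho(y)\,dy$, and likewise with $A$ in place of $B$; taking the quotient and invoking Lemma~\ref{qsdcomp} gives $\int_B g\,\rrho/\int_A g\,\rrho=\int_B\varphi(\lkap,y)\rrho(y)\,dy/\int_A\varphi(\lkap,y)\rrho(y)\,dy$. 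Fixing $A$ and varying $B$ forces $g=c(f,t)\varphi(\lkap,\cdot)$. As this holds for every such $f$ and $r(t,\cdot,\cdot)$ is continuous, testing the relation $r(t,x,y_1)\varphi(\lkap,y_2)=r(t,x,y_2)\varphi(\lkap,y_1)$ against all $f$ shows $r(t,x,y)/\varphi(\lkap,y)$ is independent of $y$; the symmetry $r(t,x,y)=r(t,y,x)$ then yields the factorization $r(t,x,y)=h(t)\varphi(\lkap,x)\varphi(\lkap,y)$. Substituting this product into the parabolic equation and using $L^\kappa\varphi(\lkap,\cdot)=\lkap\,\varphi(\lkap,\cdot)$ gives $h'(t)=-\lkap\,h(t)$, so $h(t)=h(0)e^{-\lkap t}$, while $r(0,a,a)=1$ fixes $h(0)=\varphi(\lkap,a)^{-2}$. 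Thus $r$ is determined independently of the subsequence, so every subsequence of the original family has a further subsequence converging to the same limit, and the claimed convergence follows.

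I expect the main obstacle to be precisely this identification step: transferring the information in Lemma~\ref{qsdcomp}, which is phrased through integrals of $p^\kappa$ against compactly supported $f$ and over bounded sets, into a pointwise product structure for the locally uniform limit $r$, while simultaneously ensuring enough regularity—via the parabolic Harnack inequality and interior estimates—that $r$ is a genuine solution to which the separation-of-variables argument and the eigenfunction equation can be applied. The remaining computations (the exponential time factor and the normalization) are then routine.
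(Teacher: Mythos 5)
Your proof is correct, and its skeleton --- compactness from Lemma \ref{weakcomp}, identification of every subsequential limit, and the subsequence trick to upgrade to full convergence --- is exactly the paper's; the identification of the spatial profile through Lemma \ref{qsdcomp} and the symmetry of the kernel is also the same. Where you genuinely diverge is in how the factor $e^{-\lkap t}$ is extracted. The paper pins down the time dependence \emph{first}, by quoting Lemma 7.5 of \cite{quasistat} (the ratio limit $\langle e^{-(t+s)L^{\kappa}}f,f\rangle/\langle e^{-sL^{\kappa}}f,f\rangle\to e^{-\lkap t}$), which forces $\psi(t,x,y)=e^{-\lkap t}\psi(0,x,y)$ and leaves only the $t=0$ slice to be identified. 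You instead run the Lemma \ref{qsdcomp} identification at every fixed $t$, obtaining $r(t,x,y)=h(t)\varphi(\lkap,x)\varphi(\lkap,y)$, and recover the exponential by inserting this product into $(2\partial_t+L^{\kappa}_x+L^{\kappa}_y)r=0$ and using $L^{\kappa}\varphi(\lkap,\cdot)=\lkap\,\varphi(\lkap,\cdot)$. This works: since $\varphi(\lkap,\cdot)$ solves the eigenvalue ODE classically, testing the weak formulation against product test functions gives $h'=-\lkap h$ distributionally, hence $h(t)=h(0)e^{-\lkap t}$ by continuity of $h$; note that locally uniform convergence already suffices to show the limit is a weak solution, so the interior derivative estimates you invoke are not needed. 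What your route buys is independence from the external spectral-theoretic ratio limit of \cite{quasistat}; what it costs is the extra (routine but real) bookkeeping of verifying that $r$ is a solution, that $r>0$ so that the denominators in the Lemma \ref{qsdcomp} step do not vanish (your appeal to the parabolic Harnack inequality starting from $r(0,a,a)=1$ is the right tool and deserves a full sentence), and that the separation of variables is legitimate in the weak sense. Both arguments rely tacitly on the symmetry of $r(t,\cdot,\cdot)$ inherited from the $\Rho$-symmetry of $p^{\kappa}$, which is harmless.
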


\begin{pf}
For every sequence $(s_n)_{n\in\mathbb{N}} \subset(0,\infty)$
converging to infinity we know by Lemma \ref{weakcomp} that for some
subsequence $(s_{n_k})_k$ of $(s_n)$ there exists a~function $\psi$
such that
\[
\frac{p^{\kappa}(t+s_{n_k},x,y)}{p^{\kappa}(s_{n_k},a,a)}
\rightarrow\psi(t,x,y),
\]
where the convergence is locally uniform in $[0,\infty)\times
(0,\infty)^2$. Since by Lem\-ma~7.5 in \citet{quasistat} [see also
\citet{eD97}] for every $f \in\mL^2$ with compact support
\[
\lim_{s\rightarrow\infty}\frac{\langle e^{-(t+s)L^{\kappa
}}f,f\rangle}{\langle e^{-sL^{\kappa}}f,f\rangle} = e^{-\lambda
_0^{\kappa}t},
\]
one easily concludes that
\[
\psi(t,x,y) = e^{-\lambda_0^{\kappa}(t)}\psi(0,x,y).
\]
Lemma \ref{qsdcomp} shows that for every $f,g, h\in C^{\infty
}_0(0,\infty)$
\begin{eqnarray*}
&&\frac{\int_0^{\infty} g(y)\varphi(\lambda_0^{\kappa},y) \rrho
(y)\,dy}{\int_0^{\infty}h(y)\varphi(\lambda_0^{\kappa},y) \rrho
(y)\,dy} \\
&&\qquad = \lim_{k \rightarrow\infty}\frac{\int_0^{\infty}f(x)\int
_0^{\infty}g(y)p^{\kappa}(s_{n_k},x,y) \rrho(y)\rrho(x) \, dy \,
dx}{\int_0^{\infty}f(x)\int_0^{\infty}h(y)p^{\kappa}(s_{n_k},x,y)
\rrho(y)\rrho(x) \, dy \, dx} \\
&&\qquad = \lim_{k \rightarrow\infty}\frac{\int_0^{\infty}f(x)\int
_0^{\infty}g(y)({p^{\kappa}(s_{n_k},x,y)}/{p^{\kappa
}(s_{n_k},x_0,x_0)}) \rrho(y)\rrho(x) \, dy \, dx}{\int_0^{\infty
}f(x)\int_0^{\infty}h(y)({p^{\kappa}(s_{n_k},x,y)}/{p^{\kappa
}(s_{n_k},x_0,x_0)}) \rrho(y)\rrho(x) \, dy \, dx} \\
&&\qquad = \frac{\int_0^{\infty}f(x)\int_0^{\infty}g(y) \psi(0,x,y)
\rrho(y)\rrho(x) \, dy \, dx}{\int_0^{\infty}f(x)\int_0^{\infty
}h(y) \psi(0,x,y) \rrho(y)\rrho(x) \, dy \, dx} .
\end{eqnarray*}
This implies that for $x \in(0,\infty)$, $g,h\in C^{\infty
}_0((0,\infty)$
\begin{eqnarray*}
&&\frac{\int_0^{\infty} g(y)\varphi(\lambda_0^{\kappa},y) \rrho
(y)\,dy}{\int_0^{\infty}h(y)\varphi(\lambda_0^{\kappa},y) \rrho
(y)\,dy} \int_0^{\infty}h(y) \psi(0,x,y) \rrho(y)\,dy\\
&&\qquad =\int_0^{\infty
}g(y) \psi(0,x,y) \rrho(y)\,dy,
\end{eqnarray*}
and hence for every $h$
\[
\psi(0,x,y) = \varphi(\lambda_0^{\kappa},y)\frac{\int_0^{\infty
}h(z) \psi(0,x,z) \rrho(z)\,dz}{\int_0^{\infty}h(z)\varphi(\lambda
_0^{\kappa},z) \rrho(z)\,dz}.
\]
Due to the symmetry of $\psi(0,\cdot,\cdot)$ we conclude that for
some constant $c \geq0$
\[
\psi(0,x,y) = c  \varphi(\lambda_0^{\kappa},x)\varphi(\lambda
_0^{\kappa},y).
\]
Because of $\psi(0,a,a) = 1$ we arrive at $c^{-1} = \varphi(\lambda
_0^{\kappa},a)\varphi(\lambda_0^{\kappa},a)$. Since this is true
for every subsequence, the assertion of the theorem is proved.
\end{pf}

\begin{Corollary}\label{C:ratiolimitintegrated}
If $\nu$ is any compactly supported initial distribution, and $f$ a
nonnegative compactly supported measurable function with $\nu[f]>0$,
then for any fixed $t$,
\[
 |\log\E_{\nu}[f(X_{t+s})] -\log\E_{\nu}[f(X_{s})]  |
\]
is bounded for $s\in\R^{+}$.
\end{Corollary}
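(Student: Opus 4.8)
\noindent The plan is to derive this from the strong ratio limit theorem, Proposition~\ref{ratiolimit}, by integrating it against the compactly supported data. Write
\[
g(s):=\E_{\nu}[f(X_{s})]=\int_{0}^{\infty}\!\!\int_{0}^{\infty} f(y)\,p^{\kappa}(s,x,y)\,\rrho(y)\,dy\;\nu(dx),
\]
which by the Feynman--Kac representation \eqref{FKrepresent} already incorporates the killing. The assertion to be proved is exactly that $h(s):=\log g(t+s)-\log g(s)$ is bounded on $(0,\infty)$.

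First I would fix a reference point $a\in(0,\infty)$ and normalise the kernels by $p^{\kappa}(s,a,a)$. Dividing the integrand by $p^{\kappa}(s,a,a)$ and invoking Proposition~\ref{ratiolimit} (once with exponent $t$ and once with $t=0$), together with the local uniform control furnished by Lemma~\ref{weakcomp} and the Harnack comparison \eqref{E:harnack}, I would pass the limit inside the double integral over the compact sets $\supp\nu\times\supp f$. This yields
\[
\frac{g(t+s)}{p^{\kappa}(s,a,a)}\xrightarrow{\;s\to\infty\;} e^{-\lkap t}\,I,\qquad \frac{g(s)}{p^{\kappa}(s,a,a)}\xrightarrow{\;s\to\infty\;} I,
\]
where $I:=\varphi(\lkap,a)^{-2}\bigl(\int_{0}^{\infty}\varphi(\lkap,x)\,\nu(dx)\bigr)\bigl(\izf f(y)\varphi(\lkap,y)\,\rrho(y)\,dy\bigr)$. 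The constant $I$ is finite, by continuity (hence boundedness) of $\varphi(\lkap,\cdot)$ on the compact supports, and strictly positive, since $\varphi(\lkap,\cdot)>0$ and $\nu[f]>0$ force $\izf f\varphi(\lkap,\cdot)\,\rrho>0$. Taking the quotient of the two displays (legitimate because $I\neq0$) gives $g(t+s)/g(s)\to e^{-\lkap t}$, so that $h(s)\to-\lkap t$ as $s\to\infty$; in particular $h$ is bounded on $[1,\infty)$.

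It then remains to control $h$ near $0$. The map $s\mapsto g(s)$ is continuous and strictly positive on $(0,\infty)$ --- continuity from continuity of the kernel in $s$, positivity from strict positivity of $p^{\kappa}(s,\cdot,\cdot)$ in the interior for $s>0$ --- so $h$ is continuous on $(0,\infty)$. As $s\downarrow 0$, strong continuity of the semigroup gives $g(s)\to\nu[f]>0$, while $g(t+s)\to g(t)\in(0,\infty)$; hence $h$ also has a finite limit at $0^{+}$. A continuous function on $(0,\infty)$ possessing finite limits at both endpoints is bounded, which is the claim.

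The one genuine obstacle is the interchange of limit and integral in the second step, and this is exactly where the compact support of both $\nu$ and $f$ enters. On $\supp\nu\times\supp f$ the normalised kernels $p^{\kappa}(t+s,x,y)/p^{\kappa}(s,a,a)$ are uniformly bounded (Lemma~\ref{weakcomp} combined with \eqref{E:harnack}) and, as the proof of Proposition~\ref{ratiolimit} shows, converge locally uniformly to their stated limit; dominated convergence against the finite measures $\nu(dx)$ and $f(y)\rrho(y)\,dy$ then applies. Everything else is soft.
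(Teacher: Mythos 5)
Your argument is correct and rests on the same two pillars as the paper's proof --- Proposition~\ref{ratiolimit} and the parabolic Harnack inequality --- but it combines them differently. The paper never interchanges a limit with the integration against $\nu(dx)\,f(y)\rrho(y)dy$: it uses \eqref{E:harnack} to sandwich $g(s)=\E_{\nu}[f(X_{s})]$ between $c^{\mp 1}\,p^{\kappa}(s,x_{*},y_{*})\,\Rho[f]$ for a single fixed pair $(x_{*},y_{*})$ in $\supp(\nu)\cup\supp(f)$, so that $g(t+s)/g(s)$ is trapped between $c^{\mp 2}$ times a ratio of kernels at fixed points, to which Proposition~\ref{ratiolimit} applies directly; only two-sided bounds are needed, never an exact limit of the integrated quantity. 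You instead normalise by $p^{\kappa}(s,a,a)$ and pass the limit through the double integral by dominated convergence, using the uniform bound on compacta from Lemma~\ref{weakcomp} together with the local uniformity of the convergence in Proposition~\ref{ratiolimit} (which, as you correctly note, is what its proof actually delivers even though the statement is pointwise). Your route costs a little more care at the interchange step but yields the sharper conclusion $g(t+s)/g(s)\to e^{-\lkap t}$ --- an integrated strong ratio limit theorem --- of which the stated boundedness is an immediate corollary; the paper's sandwich is shorter and sidesteps the interchange entirely. Two shared caveats, not specific to your write-up: the positivity of $\int f\varphi(\lkap,\cdot)\,\rrho$ really requires $\Rho[f]>0$ rather than just $\nu[f]>0$ (the paper's proof makes the identical implicit assumption when it writes $\Rho[f]$ as a positive finite constant), and your appeal to strong continuity of the semigroup to get $g(s)\to\nu[f]$ as $s\downarrow 0$ is not automatic for a merely measurable $f$ paired against a measure $\nu$, though the paper glosses over the same endpoint by simply declaring $g$ positive and continuous.
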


\begin{pf}
Since $g(s):=\int\!\!\int p^{\kappa}(s,x,y)\rrho(y) f(y) \,d\nu(x)$ is
positive and continuous, it suffices to show that
\[
0<\liminf_{s\to\infty} \frac{g(t+s)}{g(s)}\le\limsup_{s\to\infty
} \frac{g(t+s)}{g(s)}<\infty.
\]
By \eqref{E:harnack} we may find positive $c$ such that for $s$
sufficiently large, and any $x_{*},y_{*}\in K:=\supp(\nu)\cup\supp(f)$,
\[
c^{-1} p^{\kappa}(s,x_{*},y_{*})\Rho[f]\le g(s)\le c p^{\kappa
}(s,x_{*},y_{*})\Rho[f].
\]
Thus,
\[
c^{-2} \frac{p^{\kappa}(s+t,x_{*},y_{*})}{p^{\kappa
}(s,x_{*},y_{*})}\le\frac{g(s+t)}{g(s)}\le c^{2} \frac{p^{\kappa
}(s+t,x_{*},y_{*})}{p^{\kappa}(s,x_{*},y_{*})}.
\]
By Proposition \ref{ratiolimit} the upper and lower bounds converge to
$c^{2}e^{-\lkap}$ and $c^{-2}e^{-\lkap}$, respectively, as $s\to
\infty$.
\end{pf}

\begin{remark}
\label{rem:Martin}
In terms of parabolic Martin boundary theory Proposition \ref
{ratiolimit} says that every sequence $(s_n,x) \subset(0,\infty)
\times(0,\infty)$ with $\lim_{n\rightarrow\infty}s_n = \infty$
converges in the parabolic Martin topology to the parabolic Martin
boundary point corresponding to the minimal parabolic function
$h_{\lambda_0^{\kappa}}(t,x) = e^{\lambda_0^{\kappa}t}\varphi
(\lambda_0^{\kappa},x)$. The parabolic function $h_{\lambda
_0^{\kappa}}$ must actually be invariant, since it corresponds to a
point in the parabolic Martin boundary whose time coordinate is $\infty$.
\end{remark}

\begin{remark}\label{Daviesconjecture}
The existence of strong ratio limits for general symmetric diffusion,
that is, the existence of
\[
\lim_{t\rightarrow\infty}\frac{p(t,x,y)}{p(t,x_0,y_0)},
\]
where $p(t,\cdot,\cdot)$ denotes the transition kernel of the
diffusion, was investigated under special conditions by \citet{eD97}
and is now often referred to as Davies's conjecture. In a private
communication, Gady Kozma disproved this conjecture by presenting a
counterexample. Proposition \ref{ratiolimit} shows that in one
dimension the Davies conjecture is true, if one boundary point is
regular. It is an open question whether the Davies conjecture
generally holds in one dimension.
\end{remark}

\begin{Prop} \label{P:Harnackconstantlimit}
For all positive $z$, including $z=\infty$,
\[
\limsup_{t\to\infty} \frac{1}{t} \log\P_{x} \{ X_{t} \le
z \}  \quad \mbox{and}\quad  \liminf_{t\to\infty} \frac{1}{t}
\log\P_{x} \{ X_{t} \le z  \}
\]
are both constant in $x>0$. Hence also
\[
\limsup_{t\to\infty} \frac{1}{t} \log\P_{x} \{ X_{t} \le z
\cond\tp>t \}  \quad \mbox{and}\quad  \liminf_{t\to\infty}
\frac{1}{t} \log\P_{x} \{ X_{t} \le z \cond\tp>t \}
\]
are both constant in $x>0$.
\end{Prop}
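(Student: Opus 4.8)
The plan is to reduce both assertions to a single pointwise comparison of heat kernels obtained by chaining the local parabolic Harnack inequality \eqref{E:harnack2} along a path between the two starting points. Throughout, write $q_{x}(t,z):=\int_{0}^{z}p^{\kappa}(t,x,y)\rrho(y)\,dy$ for $z\in(0,\infty]$; by the Feynman--Kac representation \eqref{FKrepresent} this is the quantity denoted $\P_{x}\{X_{t}\le z\}$ in the statement, $q_{x}(t,\infty)=\P_{x}\{\tp>t\}$ is the survival probability, and $\P_{x}\{X_{t}\le z\mid \tp>t\}=q_{x}(t,z)/q_{x}(t,\infty)$.

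First I would fix two interior points $x,x'\in(0,\infty)$ and join them by a finite chain $x=x_{0},x_{1},\dots,x_{N}=x'$ with $|x_{i+1}-x_{i}|<\tfrac12\wedge\tfrac{|x_{i}|}{4}$; finitely many steps suffice, since the $x_{i}$ may be taken in the compact segment between $x$ and $x'$, which is bounded away from $0$, so the admissible step size is uniformly positive. Applying \eqref{E:harnack2} successively, advancing the time by one at each step, yields a constant $C=\prod_{i=0}^{N-1}\zeta(x_{i})<\infty$ (finite by local boundedness of $\zeta$) with $p^{\kappa}(t,x,y)\le C\,p^{\kappa}(t+N,x',y)$ for every $y>0$ and every $t\ge1$. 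The essential feature is that this bound is uniform in $y$, so integrating against $\rrho(y)\,dy$ over an arbitrary $[0,z]$ (including $z=\infty$) gives $q_{x}(t,z)\le C\,q_{x'}(t+N,z)$ with $C,N$ independent of $z$ and $t$. Interchanging $x$ and $x'$ produces the reverse inequality $q_{x'}(t,z)\le C'\,q_{x}(t+N',z)$.

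For the unconditioned statement I would take $\tfrac1t\log$ of $q_{x}(t,z)\le C\,q_{x'}(t+N,z)$. Since $\tfrac1t\log C\to0$ and the shift $t\mapsto t+N$ alters $\tfrac1t\log q_{x'}$ only through the factor $\tfrac{t+N}{t}\to1$, which preserves both $\limsup$ and $\liminf$ because $q_{x'}(\cdot,z)\le1$ forces these logarithmic quantities to be nonpositive, the limsup and liminf of $\tfrac1t\log q_{x}(t,z)$ are each dominated by those of $\tfrac1t\log q_{x'}(t,z)$. The symmetric inequality gives the opposite domination, hence equality, which is the asserted constancy in $x$.

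For the conditional statement the only additional ingredient is the monotonicity of the survival probability: $t\mapsto q_{x}(t,\infty)=\P_{x}\{\tp>t\}$ is non-increasing. Combining $q_{x}(t,z)\le C\,q_{x'}(t+N,z)$ with $q_{x}(t,\infty)\ge C'^{-1}q_{x'}(t-N',\infty)\ge C'^{-1}q_{x'}(t+N,\infty)$, where the last step uses monotonicity and holds for $t$ large, I would obtain $\P_{x}\{X_{t}\le z\mid\tp>t\}\le CC'\,\P_{x'}\{X_{t+N}\le z\mid\tp>t+N\}$, and then conclude exactly as in the unconditioned case. I expect this conditional step to be the main obstacle: the two chained comparisons enter the numerator and denominator with mismatched time shifts, and it is precisely the monotonicity of $q_{x}(\cdot,\infty)$ that lets me realign them without disturbing the exponential rate.
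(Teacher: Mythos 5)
Your proof is correct, and it takes a genuinely different route from the paper's at the one step where something nontrivial has to be done. The paper also starts from the local parabolic Harnack inequality \eqref{E:harnack2}, but it only compares two \emph{nearby} starting points (asserting without detail that the general case reduces to this), and it neutralizes the resulting unit time shift by invoking Corollary \ref{C:ratiolimitintegrated} --- i.e.\ the strong ratio limit theorem, Proposition \ref{ratiolimit} --- to show that $\frac{1}{t}\log\bigl(\P_{x}\{X_{t+1}\le z\}/\P_{x}\{X_{t}\le z\}\bigr)\to 0$, arguing by contradiction. You instead make the chaining explicit (which is the honest version of the paper's ``without loss of generality'') and then dispose of the time shift by the elementary observation that $q_{x'}(\cdot,z)\le 1$ forces $\frac1t\log q_{x'}(t+N,z)\le\frac{1}{t+N}\log q_{x'}(t+N,z)$, so a bounded additive shift can only help in the direction you need; symmetry then gives equality of the limsups and liminfs. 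This buys you two things: the argument is self-contained modulo the Harnack inequality (no appeal to the ratio limit theorem), and it covers $z=\infty$ without comment, whereas Corollary \ref{C:ratiolimitintegrated} as stated requires a \emph{compactly supported} $f$ and so does not literally apply to $\indic_{[0,\infty)}$. Your treatment of the conditional statement --- realigning the mismatched shifts in numerator and denominator via monotonicity of $t\mapsto\P_{x}\{\tp>t\}$ --- is also spelled out where the paper merely says the proof is identical. What the paper's route buys in exchange is brevity, since the ratio-limit machinery is already in place by that point in the text.
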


\begin{pf}
We prove only the first statement for $\limsup$, the other proof being
identical. Suppose we have $x,x'$ such that
%
\begin{equation} \label{E:xxprime}
\limsup_{t\to\infty} \frac{1}{t} \log\P_{x} \{ X_{t} \le z
 \} < \limsup_{t\to\infty} \frac{1}{t} \log\P_{x'} \{
X_{t} \le z  \} .
\end{equation}
We may assume without loss of generality that $|x-x'|< \frac
{1}{2}\wedge\frac{|x|}{4}\wedge\frac{|x'|}{4}$. (If not, then there
must be other starting points closer together where the limits differ.)
Applying \eqref{E:harnack2}, we have for all $t\geq1$,
\[
\P_{x'} \{ X_{t} \le z  \} \leq\zeta(x')\P_{x} \{
X_{t+1} \le z  \} ,
\]
so that
\begin{eqnarray*}
\limsup_{t\to\infty} \frac{1}{t}\log\P_{x'} \{ X_{t} \le
z \} &\leq&\limsup_{t\to\infty}\frac{1}{t}\log\P_{x} \{
X_{t} \le z  \}\\
&&{} +\limsup_{t\to\infty} \frac{1}{t} \log\frac{\P_{x}
\{ X_{t+1} \le z  \}}{\P_{x} \{ X_{t} \le z  \}}.
\end{eqnarray*}
The limits on the second line are 0 by Corrolary \ref
{C:ratiolimitintegrated}. We have then a~contradiction to \eqref
{E:xxprime}, which completes the proof.
\end{pf}

\subsection{Conditions on the initial distribution} \label{sec:ID}
In their version of the ratio limit theorem [\citeauthor{quasistat} (\citeyear{quasistat}), Theorem
3.1], the authors had to pose an additional condition on the
initial distribution $\nu$, and they stated the general case as an
open problem. Their most general condition reads\looseness=1
{\renewcommand{\theequation}{$\mathit{ID}'$}
\begin{equation}\label{eqID}
\qquad \begin{tabular}{@{}p{300pt}@{}}
If $X_0$ has distribution $\nu$, then
$\exists s \geq0$ for which the distribution of~$X_s$ has a density
$f \in\mL^2$, with $\liminf_{\lambda
\downarrow\lambda_0^{\kappa}} Uf(\lambda) > - \infty$,
\end{tabular}
\end{equation}}

\vspace*{-8pt}
\setcounter{equation}{8}\looseness=0

\noindent where the $U$ denotes the unitary operator from Theorem \ref
{weylspectraltheorem}. [In \citet{quasistat} the definition of the
operator $U$ is slightly different. This is connected to the fact that
there the authors work $L^2$ spaces with respect to the Lebesgue
measure.] This condition is obviously satisfied for compactly supported
initial distributions having a density, but it is not obvious how to
verify that a given general initial distribution $\nu$ with compact
support satisfies the condition (\ref{eqID}). Using some results from
spectral theory and several ideas from \citet{quasistat}, we can
remove this restriction. An essential ingredient in the proof is Lemma
\ref{L:sqrtcompare}, which allows us to ignore the upper end of the
spectrum for large $t$.

\begin{lemma}\label{L:lowspec}
Given $g\in\mL^{2}$, we have
%
\begin{equation} \label{E:lowspec}
\lim_{t\to\infty} t^{-1}\log \| e^{-tL^{\kappa}}g  \|
=-\lambda_{g}.
\end{equation}
\end{lemma}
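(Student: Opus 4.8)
The plan is to read both the upper and the lower bound directly off the spectral representation of $e^{-tL^{\kappa}}g$, in the same spirit as the final computation in the proof of Lemma \ref{L:sqrtcompare}. We may assume $\|g\|>0$, so that $\lambda_{g}$ is finite and the spectral measure $d\|E^{\kappa}g\|^{2}$ has positive total mass $\|g\|^{2}$. Applying the spectral theorem \eqref{spectraltheorem3} with $f(\lambda)=e^{-t\lambda}$, and using that by the definition \eqref{E:lambdag} the measure $d\|E^{\kappa}g\|^{2}$ is carried by $[\lambda_{g},\infty)$, we have
\begin{equation}\label{E:lowspecrep}
\bigl\|e^{-tL^{\kappa}}g\bigr\|^{2}=\int_{\lambda_{g}}^{\infty}e^{-2t\lambda}\,d\|E^{\kappa}g\|^{2}(\lambda).
\end{equation}
The statement \eqref{E:lowspec} is then just a Laplace-type asymptotic for this integral as $t\to\infty$.

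For the upper bound, on the range of integration in \eqref{E:lowspecrep} we have $e^{-2t\lambda}\le e^{-2t\lambda_{g}}$, so $\|e^{-tL^{\kappa}}g\|^{2}\le e^{-2t\lambda_{g}}\|g\|^{2}$; taking logarithms, dividing by $t$, and letting $t\to\infty$ gives $\limsup_{t\to\infty}t^{-1}\log\|e^{-tL^{\kappa}}g\|\le -\lambda_{g}$. For the lower bound, fix $\varepsilon>0$. Because $\lambda_{g}$ is the infimum of the support of $d\|E^{\kappa}g\|^{2}$, the mass $m_{\varepsilon}:=\|E^{\kappa}_{\lambda_{g}+\varepsilon}g\|^{2}$ carried by $[\lambda_{g},\lambda_{g}+\varepsilon]$ is strictly positive: were it zero, then $\lambda_{g}+\varepsilon$ would belong to the set $\{\lambda:\|E_{\lambda}g\|=0\}$ in \eqref{E:lambdag}, exceeding its supremum. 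Restricting the integral in \eqref{E:lowspecrep} to $[\lambda_{g},\lambda_{g}+\varepsilon]$ and using $e^{-2t\lambda}\ge e^{-2t(\lambda_{g}+\varepsilon)}$ there yields $\|e^{-tL^{\kappa}}g\|^{2}\ge m_{\varepsilon}\,e^{-2t(\lambda_{g}+\varepsilon)}$, whence $\liminf_{t\to\infty}t^{-1}\log\|e^{-tL^{\kappa}}g\|\ge-(\lambda_{g}+\varepsilon)$. Letting $\varepsilon\downarrow 0$ gives the matching lower bound, and combining the two halves proves \eqref{E:lowspec}.

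No genuine obstacle arises; the only point demanding care is the strict positivity of $m_{\varepsilon}$, which is precisely the assertion that every right-neighborhood of $\lambda_{g}$ carries positive spectral mass, i.e.\ the content of $\lambda_{g}$ being the \emph{infimum} of the support of the spectral measure rather than merely a point below which the measure vanishes. With that observation the whole argument is an elementary application of the Laplace method to \eqref{E:lowspecrep}, and is essentially the estimate already used (with $f(\lambda)=\sqrt{\lambda}\,e^{-t\lambda}$) at the end of the proof of Lemma \ref{L:sqrtcompare}.
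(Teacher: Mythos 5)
Your proof is correct and follows essentially the same route as the paper: the same spectral-theorem upper bound $\|e^{-tL^{\kappa}}g\|^{2}\le e^{-2t\lambda_{g}}\|g\|^{2}$, and the same lower bound obtained by restricting the spectral integral to $[\lambda_{g},\lambda_{g}+\varepsilon]$ (the paper writes $\lambda_{*}$ for your $\lambda_{g}+\varepsilon$) and using that this interval carries positive spectral mass because $\lambda_{g}$ lies in the support of $d\|E^{\kappa}g\|^{2}$. Your explicit justification of the positivity of $m_{\varepsilon}$ from the definition \eqref{E:lambdag} is a welcome touch that the paper leaves implicit.
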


\begin{pf}
By the spectral theorem \eqref{spectraltheorem3} we know that
\begin{eqnarray*}
\limsup_{t\to\infty} t^{-1}\log \| e^{-tL^{\kappa}}g  \|^{2}&=&\limsup_{t\to\infty} t^{-1}\log\int_{\lambda_{g}}^{\infty
} e^{-2t\lambda} \,d \| E^{\kappa}_{\lambda} g \|^{2}(\lambda)\\
&\le&-2\lambda_{g} +\limsup_{t\to\infty} t^{-1}\log \| g \|^{2}\\
&=& -2\lambda_{g}.
\end{eqnarray*}
For the lower bound we take any $\lambda_{*}>\lambda_{g}$, and have
\begin{eqnarray*}
\liminf_{t\to\infty} t^{-1}\log \| e^{-tL^{\kappa}}g  \|
^{2}&\ge&\liminf_{t\to\infty} t^{-1}\log\int_{\lambda
_{g}}^{\lambda_{*}} e^{-2t\lambda} \,d \| E^{\kappa}_{\lambda}
g \|^{2} (\lambda)\\
&\ge&-2\lambda_{*} +\liminf_{t\to\infty} t^{-1}\log \|
E^{\kappa}  ([\lambda_{g},\lambda_{*}] )g \|^{2}.
\end{eqnarray*}
Since $\lambda_{g}$ is in the support of $d\|E^{\kappa}g\|$, this is
equal to $-2\lambda_{*}$. Since this is true for any $\lambda
_{*}>\lambda_{g}$, this completes the proof of \eqref{E:lowspec}.
\end{pf}

For a Radon measure $\nu$ on $(0,\infty)$ and a Borel measurable
function $f\dvtx\break(0,\infty)\mapsto\mathbb{C}$ we use the notation
$\langle\nu, f\rangle:= \int_0^{\infty} f(s) \nu(ds)$.
%
\begin{theorem}\label{generallocalMandl}
The killed diffusion $X_t$ converges on compacta to the quasistationary
distribution with density proportional to $\varphi(\lambda_0^{\kappa
},\cdot)$ from any initial distribution which is compactly supported
in $(0,\infty)$.
\end{theorem}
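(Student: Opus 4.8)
The plan is to reduce the statement to a ratio limit theorem for transition kernels and then read it off from the strong ratio limit Proposition \ref{ratiolimit}. By the definition of convergence on compacta in Section \ref{sec:quasi}, it suffices to fix $z>0$ and show, for every Borel $A\subseteq[0,z]$, that
\[
\frac{\P_\nu(X_t\in A)}{\P_\nu(X_t\le z)}\longrightarrow\frac{\int_A\varphi(\lkap,y)\,\rrho(y)\,dy}{\int_0^z\varphi(\lkap,y)\,\rrho(y)\,dy}.
\]
Writing the sub-probability density of $X_t$ on survival, started from $\nu$, as $h_t(y):=\int_0^\infty p^\kappa(t,x,y)\,d\nu(x)$, Fubini gives $\P_\nu(X_t\in A)=\int_A h_t(y)\,\rrho(y)\,dy$. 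I would fix a reference point $a\in(0,\infty)$ and normalise throughout by $p^\kappa(t,a,a)$.

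First I would settle the interior behaviour. Since $\nu$ is compactly supported and the convergence in Proposition \ref{ratiolimit} is locally uniform (by Lemma \ref{weakcomp}), I can pass $\int d\nu(x)$ through the limit to obtain, locally uniformly in $y\in(0,\infty)$,
\[
\frac{h_t(y)}{p^\kappa(t,a,a)}=\int_0^\infty\frac{p^\kappa(t,x,y)}{p^\kappa(t,a,a)}\,d\nu(x)\longrightarrow c\,\varphi(\lkap,y),\qquad c:=\frac{\langle\nu,\varphi(\lkap,\cdot)\rangle}{\varphi(\lkap,a)^2}>0.
\]
On any interval $[\delta,z]$ with $\delta>0$ this convergence is uniform, so $p^\kappa(t,a,a)^{-1}\int_A h_t\,\rrho\to c\int_A\varphi(\lkap,\cdot)\,\rrho$ uniformly over Borel $A\subseteq[\delta,z]$. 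The constant $c$ and the factor $p^\kappa(t,a,a)$ then cancel in the ratio, which proves the claim for sets bounded away from $0$.

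The remaining, and genuinely delicate, point is the neighbourhood of the regular boundary $0$: I must rule out mass concentrating there, i.e. show $\lim_{\epsilon\downarrow0}\limsup_{t\to\infty}p^\kappa(t,a,a)^{-1}\P_\nu(X_t\in[0,\epsilon])=0$. Because $\varphi(\lkap,\cdot)$ is continuous and $\rrho$ is integrable at $0$, we have $\int_0^\epsilon\varphi(\lkap,y)\,\rrho(y)\,dy\to0$, so it is enough to produce a constant $C$ and $\epsilon_0,T>0$ with $h_t(y)\le C\,p^\kappa(t,a,a)$ for all $y\in[0,\epsilon_0]$ and $t\ge T$; integrating then gives $p^\kappa(t,a,a)^{-1}\P_\nu(X_t\in[0,\epsilon])\le C\,\Rho([0,\epsilon])\to0$ uniformly in $t$, ruling out an atom at $0$ and letting $[0,z]$ be approximated from inside by $[\delta,z]$. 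The crude bounds of Lemma \ref{L:sqrtcompare} are insufficient here, since they control $h_t$ near $0$ only through the global norms $\|h_t\|$ and $\|\sqrt{L^\kappa}h_t\|$, whose ratio to $p^\kappa(t,a,a)$ can diverge in the escape case; and the interior Harnack inequality \eqref{E:harnack} does not reach the endpoint. Instead I would exploit the regularity of $0$ to obtain a boundary Harnack inequality: reflecting $p^\kappa(t,x,\cdot)$ across $0$ in accordance with the boundary condition \eqref{E:fellerbound} produces a nonnegative solution of a parabolic equation on a two-sided neighbourhood of $0$, to which the interior estimate \eqref{E:harnack} applies and yields $p^\kappa(t,x,y)\le C\,p^\kappa(t,x,y')$ for $y\in[0,\epsilon_0]$ and a fixed interior point $y'$, uniformly in $t\ge T$. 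Integrating this against $\nu$ and invoking the interior convergence of $h_t(y')/p^\kappa(t,a,a)$ already established supplies the required uniform bound near $0$.

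With the interior ratio limit and the tightness at $0$ in hand, both $p^\kappa(t,a,a)^{-1}\P_\nu(X_t\in A)$ and $p^\kappa(t,a,a)^{-1}\P_\nu(X_t\le z)$ converge, to $c\int_A\varphi(\lkap,\cdot)\,\rrho$ and $c\int_0^z\varphi(\lkap,\cdot)\,\rrho$ respectively, and taking their quotient proves the theorem. I expect the boundary analysis of the previous paragraph — extending Harnack control up to the regular endpoint $0$ — to be the main obstacle, as it is precisely the step where global spectral bounds fail and one must argue locally; the rest is a routine combination of Proposition \ref{ratiolimit}, Lemma \ref{weakcomp} and Fubini's theorem.
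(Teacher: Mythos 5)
Your overall architecture --- interior ratio limit from Proposition \ref{ratiolimit} plus a tightness estimate at the regular endpoint $0$ --- is genuinely different from the paper's proof, which never touches the transition kernel near the boundary at all. The paper works entirely in the spectral representation: by Weyl's expansion, $\nu\bigl[E^{\kappa}([\lkap,\lambda_1])e^{-tL^{\kappa}}\mathbf{1}_A\bigr]=\int_{[\lkap,\lambda_1]}e^{-t\lambda}\bigl[\int\varphi(\lambda,x)d\nu(x)\int_A\varphi(\lambda,y)d\Rho(y)\bigr]d\sigma(\lambda)$, a Laplace-type argument localises the $\lambda$-integral at $\lkap$, and Lemma \ref{L:sqrtcompare} shows the spectral tail above $\lambda_1$ decays at rate $e^{-\lambda_1 t}$, hence is negligible against the low-spectrum part which decays no faster than $e^{-\lkap t}$. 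Since $\varphi(\lambda,\cdot)$ is jointly continuous up to $y=0$, Borel sets $A\subset[0,z]$ touching the boundary cause no difficulty. Your route instead leans on Proposition \ref{ratiolimit}, which in this paper is derived from the imported Lemma \ref{qsdcomp} (Theorem 3.1 of \cite{quasistat}); that is not circular, but note that the paper's spectral proof of this very theorem is also what supplies the ``direct'' justification of Lemma \ref{qsdcomp} promised in the text.

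The genuine gap is the boundary Harnack step, which you correctly identify as the crux but do not actually establish. The proposed mechanism --- reflect $p^{\kappa}(t,x,\cdot)$ across $0$ ``in accordance with the boundary condition \eqref{E:fellerbound}'' and apply the interior estimate \eqref{E:harnack} to the extension --- works only for the Neumann case $\alpha=0$, where even reflection preserves both the equation and nonnegativity. For Dirichlet conditions ($\alpha=\infty$), the natural odd reflection produces a sign-changing solution, so the nonnegativity hypothesis of the parabolic Harnack inequality fails; for intermediate Robin conditions there is no reflection formula at all; and the extended equation would in any case have a drift coefficient $b$ that is only $\mL^1_{loc}$ at the reflection point, outside the regularity class for which \eqref{E:harnack} is quoted. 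Since Dirichlet killing at $0$ is the central case of the paper, the estimate $h_t(y)\le C\,p^{\kappa}(t,a,a)$ for $y\in[0,\epsilon_0]$, uniformly in $t\ge T$, remains unproved --- and, as you yourself observe, the global bounds of Lemma \ref{L:sqrtcompare} cannot substitute for it, because their ratio to $p^{\kappa}(t,a,a)$ diverges precisely in the $R$-null and $R$-transient regimes. A correct completion would either prove a genuine boundary Harnack inequality up to a regular Robin/Dirichlet endpoint under the paper's weak hypotheses on $b$, or abandon the kernel-level argument near $0$ in favour of the integrated spectral estimate the paper uses, in which the factor $\|\mathbf{1}_A\|=\Rho(A)^{1/2}$ is compared against a denominator whose exponential rate is pinned at $\lkap$ only after truncating the spectral measure to $[\lkap,\lambda_1]$.
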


\begin{pf}
An application of Weyl's eigenfunction expansion theorem and Fubini's
theorem tells us that the operator $E^\kappa([\lambda_0^{\kappa
},\lambda_1))e^{-tL^{\kappa}}$ has a continuous integral kernel
%
\begin{equation}\label{truncatedintegralkernel}
(t,x,y) \mapsto h^{\lambda_1}(t,x,y) = \int_{[\lambda_0^{\kappa
},\lambda_1]}e^{-t\lambda}\varphi(\lambda,x)\varphi(\lambda,y)\,
d\sigma(\lambda)
\end{equation}
with respect to the measure $\Rho$. This implies that for every
compact subset $K \subset[0,\infty)$ the function $E^\kappa([\lambda
_0^{\kappa},\lambda_1])e^{-tL^{\kappa}}\mathbf{1}_K$ is continuous
and therefore
\[
\langle\nu,E^\kappa([\lambda_0^{\kappa},\lambda_1])e^{-tL^{\kappa
}}\mathbf{1}_K\rangle= \int_{\mathbb{R}}E^\kappa([\lambda
_0^{\kappa},\lambda_1])e^{-tL^{\kappa}}\mathbf{1}_K(x) \,d\nu(x)
\]
is well defined. For every Borel set $A \subset[0,z]$, then
%
\begin{eqnarray}\label{kernelform}\qquad
&&\nu [ E^\kappa([\lambda_0^{\kappa},\lambda_1])e^{-tL^{\kappa
}}\mathbf{1}_A ] \nonumber\\[-8pt]\\[-8pt]
&&\qquad = \int_{[\lambda_0^{\kappa},\lambda
_1]}e^{-t\lambda} \biggl[\int_{0}^{z}\varphi(\lambda,x) \, d\nu
(x)\int_A\varphi(\lambda,y)\,d\Rho(y)  \biggr]\,d\sigma(\lambda).\nonumber
\end{eqnarray}

Let $g\dvtx[\lambda_{0}^{\kappa},\infty)\to\R$ be any continuous
function. Then
%
\begin{equation} \label{gbound}\qquad
\limsup_{t \rightarrow\infty}  \biggl|\frac{\int_{[\lambda
_0^{\kappa},\lambda_1]}e^{-t\lambda}g(\lambda)\,d\sigma(\lambda
)}{\int_{[\lambda_0^{\kappa},\lambda_1]}e^{-t\lambda}\,d\sigma
(\lambda)}-g(\lambda_0^{\kappa})\biggr | \le\sup_{[\lambda
_0^{\kappa},\lambda_1]}|g(\lambda)-g(\lambda_0^{\kappa})|.
\end{equation}
As in the proof of Theorem 3.1 in \citet{quasistat}, for any $\lambda
_1,\tilde{\lambda}_1,\lambda_2 > \lambda_0^{\kappa}$ set $\lambda
_{*}=\lambda_1\wedge\tilde{\lambda}_1\wedge\lambda_2\wedge\tilde
\lambda_2$ and $\lambda^{*}=\lambda_1\vee\tilde{\lambda}_1\vee
\lambda_2\vee\tilde\lambda_2$. Then we have the bound
\begin{eqnarray*}
&& \biggl|\frac{\int_{[\lambda_0^{\kappa},\lambda_1]}e^{-t\lambda
}g(\lambda)\,d\sigma(\lambda)}{\int_{[\lambda_0^{\kappa},\lambda
_2]}e^{-t\lambda}\,d\sigma(\lambda)}-\frac{\int_{[\lambda
_0^{\kappa},\tilde{\lambda}_1]}e^{-t\lambda}g(\lambda)\,d\sigma
(\lambda)}{\int_{[\lambda_0^{\kappa},\tilde\lambda
_2]}e^{-t\lambda}\,d\sigma(\lambda)} \biggr|\\[-2pt]
&&\qquad \leq e^{(\lambda_0^{\kappa}-\lambda_*)t}\frac{\int_{[\lambda_0^{\kappa},\lambda
^{*}]}|g(\lambda)|\,d\sigma(\lambda)}{\int_{[\lambda_0^{\kappa
},\lambda_{*})]}\,d\sigma(\lambda)},
\end{eqnarray*}
which tells us that
%
\begin{equation}\label{lambdaindependence}
\limsup_{t \rightarrow\infty}\frac{\int_{[\lambda_0^{\kappa
},\lambda_1]}e^{-t\lambda}g(\lambda)\,d\sigma(\lambda)}{\int
_{[\lambda_0^{\kappa},\lambda_2]}e^{-t\lambda}\,d\sigma(\lambda)}
 \mbox{ is independent of }\lambda_1, \lambda_2 \in(\lambda
_0^{\kappa},\infty).
\end{equation}
Since $g$ is continuous, \eqref{gbound} and \eqref
{lambdaindependence} combine to show that for a general positive
continuous $h$,
%
\begin{equation}\label{gundeins}
\lim_{t \rightarrow\infty}  \biggl|\frac{\int_{[\lambda_0^{\kappa
},\lambda_1]}e^{-t\lambda}h(\lambda)g(\lambda)\,d\sigma(\lambda
)}{\int_{[\lambda_0^{\kappa},\lambda_1]}h(\lambda)e^{-t\lambda}\,
d\sigma(\lambda)}-g(\lambda_0^{\kappa}) \biggr| = 0.
\end{equation}
By \eqref{kernelform} we now see that for every $\lambda_1 \in
(\lambda_0^{\kappa},\infty)$
%
\begin{eqnarray}\label{lokalspektralconv}
&&\lim_{t \rightarrow\infty} \frac{\nu[ E^\kappa([\lambda
_0^{\kappa},\lambda_1])e^{-tL^{\kappa}}\mathbf{1}_A]}{\nu[
E^\kappa([\lambda_0^{\kappa},\lambda_1])e^{-tL^{\kappa}}\mathbf
{1}_{[0,z]}]} \nonumber\\[-2pt]
&&\qquad  = \lim_{t \rightarrow\infty}
\frac{\int_{[\lambda_0^{\kappa},\lambda_1]}e^{-t\lambda}
[\int_{0}^{z}\varphi(\lambda,x) \, d\nu(x)\int_A\varphi(\lambda
,y)\,d\Rho(y)  ]\,d\sigma(\lambda)}
{\int_{[\lambda_0^{\kappa},\lambda_1]}e^{-t\lambda} [\int
_{0}^{z}\varphi(\lambda,x) \, d\nu(x)\int_0^{z}\varphi(\lambda
,y)\,d\Rho(y) ]\,d\sigma(\lambda)}\\[-2pt]
&&\qquad  =\frac{\int_A\varphi(\lambda_0^{\kappa},y) \rrho
(y)\,dy}{\int_0^z\varphi(\lambda_0^{\kappa},y) \rrho(y)\,dy}.\nonumber
\end{eqnarray}

The assertion of the theorem follows immediately from \eqref{lokalspektralconv} once it is shown that
%
\begin{equation}\label{finalconvcomp}
\lim_{t \rightarrow\infty}\frac{\mathbb{P}_{\nu}(X_t \in
A)}{\mathbb{P}_{\nu}(X_t \leq z)} = \lim_{t \rightarrow\infty
}\frac{ \nu [ E^\kappa([\lambda_0^{\kappa},\lambda
_1])e^{-tL^{\kappa}}\mathbf{1}_A ]}{\nu [ E^\kappa
([\lambda_0^{\kappa},\lambda_1])e^{-tL^{\kappa}}\mathbf
{1}_{[0,z]} ]}.
\end{equation}
Observe that
%
\begin{eqnarray}\label{laststep}
&&\frac{\nu [ e^{-tL^{\kappa}}\mathbf{1}_A ]}{\nu [
E^\kappa([0,\lambda_1])e^{-tL^{\kappa}}\mathbf{1}_A ]}\nonumber  \\[-2pt]
&&\qquad =
\frac{\nu [ E^\kappa([0,\lambda_1])e^{-tL^{\kappa}}\mathbf
{1}_A ]+\nu [ E^\kappa((\lambda_1,\infty))e^{-tL^{\kappa
}}\mathbf{1}_A ]}{\nu [ E^\kappa([0,\lambda
_1])e^{-tL^{\kappa}}\mathbf{1}_A ]}\\[-2pt]
&&\qquad = 1 +\frac{\nu [ E^\kappa((\lambda_1,\infty))e^{-tL^{\kappa
}}\mathbf{1}_A ]}{\nu [ E^\kappa([0,\lambda
_1])e^{-tL^{\kappa}}\mathbf{1}_A ]}.\nonumber
\end{eqnarray}
Since $ e^{-tL^{\kappa}}\mathbf{1}_A$ and $E^\kappa([0,\lambda
_1])e^{-tL^{\kappa}}\mathbf{1}_A$ are continuous, the function
$E^\kappa((\lambda_1,\break\infty))e^{-tL^{\kappa}}\mathbf{1}_A$ must
also be continuous.\vadjust{\goodbreak} Thus $\nu[ E^\kappa((\lambda_1,\infty
))e^{-tL^{\kappa}}\mathbf{1}_A]$ is well defined. By Lemma \ref
{L:sqrtcompare},
%
\begin{equation}\label{Zaehler}
-\lim_{t \rightarrow\infty}\frac{1}{t}\log |\langle\nu,
E^\kappa ((\lambda_1,\infty) )e^{-tL^{\kappa}}\mathbf
{1}_A\rangle | \geq\lambda_1.
\end{equation}
As $\varphi(\lambda^{\kappa}_0,x) > 0$ for every $x \in(0,\infty)$
there is, by continuity, $\lambda_1 > \lambda^{\kappa}_0$ such that
for every $\lambda\in[\lambda^{\kappa}_0,\lambda_1]$
\[
\int_0^{\infty}\varphi(\lambda,x)\,d\nu(x) \mbox{ and } \int
_A\varphi(\lambda,y)\rrho(y)\,dy \mbox{ are both positive}.
\]
Then it is easy to see that
%
\begin{equation}\label{Nenner}
 -\lim_{t \rightarrow\infty}\frac{1}{t}\log\!\int_{[\lambda^{\kappa
}_0,\lambda_1]}e^{-\lambda t}\!\int_0^{\infty}\!\varphi(\lambda,x)\,d\nu
(x)\!\int_A\varphi(\lambda,y)  \rrho(y)\,dy\,d\sigma(\lambda) \leq
\lambda_0^{\kappa}.\hspace*{-35pt}
\end{equation}
Equations \eqref{laststep}, \eqref{Zaehler} and \eqref{Nenner}
combine to show that\vspace*{-1pt}
\[
\lim_{t \rightarrow\infty}\frac{\nu [ e^{-tL^{\kappa
}}\mathbf{1}_A ]}{\nu [E^\kappa([0,\lambda
_1])e^{-tL^{\kappa}}\mathbf{1}_A ]} = 1,
\]
and therefore \eqref{finalconvcomp}.
\end{pf}

\subsection{\texorpdfstring{Entrance boundary at $\infty$}{Entrance boundary at infinity}} \label{sec:entrance}
As mentioned above, with the exception of the recent work [\citet
{6authors}], work on these problems has generally assumed that $0$ is
regular and $\infty$ natural. Intuitively, we should expect the
problems to be easier if $\infty$ is an entrance boundary. We show
that this is indeed the case in Theorem \ref{nichtnatuerlich}, as the
spectrum of the operator $L^{\kappa}$ is purely discrete. This
interesting fact has not been mentioned by previous authors [cf.
Section 3 in \citet{6authors}] working on quasistationary distributions
for one-dimensional diffusions. The proof relies on standard ideas from
the spectral theory of differential operators.\vspace*{-2pt}

\begin{theorem}\label{nichtnatuerlich}
If $\infty$ is an entrance boundary, then the spectrum of $L^{\kappa
}$ is discrete.\vspace*{-2pt}
\end{theorem}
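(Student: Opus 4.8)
The plan is to show that $L^{\kappa}$ has compact resolvent, which for a self-adjoint operator is equivalent to $\Sigma_{ess}(L^{\kappa})=\emptyset$, i.e. to discreteness. By the decomposition principle — already used in the proof of Lemma \ref{spectrum}\eqref{it:isolated} — the essential spectrum is unaffected by the boundary condition at $0$ and by passing to the operator $L^{\kappa}_{a}$ on $\mL^{2}((a,\infty),\Rho)$ with Dirichlet conditions at some $a>0$. So it suffices to prove that $L^{\kappa}_{a}$ has compact resolvent, and by the standard correspondence between compact resolvents and compact form embeddings this reduces to showing that the form domain embeds compactly into $\mL^{2}((a,\infty),\Rho)$. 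Since $\kappa\ge 0$ gives $q^{\kappa}\ge q^{0}$ on the relevant domain, every set bounded in the graph norm of $q^{\kappa}$ is bounded in that of $q^{0}$; hence it is enough to treat $\kappa\equiv 0$, i.e. to prove the compactness of the embedding of $\{\varphi:\varphi(a)=0,\ \int_{a}^{\infty}|\varphi'(y)|^{2}\rrho(y)\,dy<\infty\}$ into $\mL^{2}((a,\infty),\Rho)$.

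Next I would recast the entrance hypothesis as a decay statement for the product of the scale with the tail of the speed measure. Writing $G(x)=\int_{a}^{x}\rrho(y)^{-1}\,dy$ and $F(x)=\int_{x}^{\infty}\rrho(y)\,dy$, the assumption that $\infty$ is an entrance boundary yields that $\int_{a}^{\infty}\rrho<\infty$, that $\int_{a}^{\infty}\rrho^{-1}=\infty$, and that the double integral $\int_{a}^{\infty}G(x)\rrho(x)\,dx$ is finite. Since $G$ is nondecreasing,
\[
0\le G(x)F(x)=G(x)\int_{x}^{\infty}\rrho(y)\,dy\le \int_{x}^{\infty}G(y)\rrho(y)\,dy,
\]
and the right-hand side is the tail of a convergent integral, so $G(x)F(x)\to 0$ as $x\to\infty$. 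At the lower (regular) endpoint $a$ one has $G(x)\to 0$ while $F$ stays bounded, so $G(x)F(x)\to 0$ there as well.

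Finally I would invoke the classical compactness criterion for the weighted Hardy operator $(Tg)(x)=\int_{a}^{x}g(y)\,dy$ (the mere continuity of $T$ on $\mL^{2}((a,\infty),\Rho)$ being the weighted Hardy inequality associated with the finiteness of $\sup_{x}G(x)F(x)$, cf. \cite{bM72}, and the compact version following from the additional vanishing of this Muckenhoupt functional at both endpoints established in the previous paragraph). Writing $\varphi(x)=\int_{a}^{x}\varphi'(y)\,dy$ identifies the form embedding with $T$ acting on derivatives, so $T$ compact gives the desired compact embedding. Carrying this back through the reductions of the first paragraph yields compactness of $(L^{\kappa}+1)^{-1}$, hence that $\Sigma(L^{\kappa})$ is discrete.

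The main obstacle is the combination of the second and third steps: converting the purely integral-geometric entrance condition into the precise vanishing $\lim_{x\to\infty}G(x)F(x)=0$ that the Hardy compactness criterion demands, and checking that the Muckenhoupt-type supremum is finite so that the embedding is at least bounded before upgrading boundedness to compactness. The reductions to $\kappa\equiv 0$ and to a Dirichlet problem, and the equivalence of compact resolvent with discreteness, are routine; the real content is that an entrance boundary forces exactly the Hardy decay which makes the embedding compact rather than merely continuous.
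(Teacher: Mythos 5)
Your proof is correct, but it takes a genuinely different route from the paper's. The paper argues through oscillation theory: for $\kappa\equiv 0$ it integrates by parts between a consecutive local maximum $x$ and local minimum $\tx$ of a solution of $\tau f=\lambda f$ to obtain $\frac{1}{2\lambda}<\int_{x}^{\tx}\rrho(y)\int_{1}^{y}\rrho(z)^{-1}\,dz\,dy$, so finiteness of $\int_{1}^{\infty}\rrho(y)\int_{1}^{y}\rrho(z)^{-1}\,dz\,dy$ (the entrance condition) forces every solution to have finitely many zeros, and a theorem of Hartmann then yields discreteness; the passage from $L^{0}$ to $L^{\kappa}$ with $\kappa\geq 0$ is done by the minimax principle, which is the same monotonicity observation as your nested form balls. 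You instead prove compactness of the resolvent via the compactness criterion for the weighted Hardy operator: the entrance condition $N:=\int_{a}^{\infty}G(y)\rrho(y)\,dy<\infty$ gives both $\sup_{x}G(x)F(x)\leq N<\infty$ (boundedness of the embedding) and, since $G(x)F(x)\leq\int_{x}^{\infty}G(y)\rrho(y)\,dy$ is the tail of a convergent integral, the vanishing of the Muckenhoupt functional at $\infty$ and at the regular endpoint, which is exactly what compactness requires. Your reductions (Dirichlet condition at $a$, and $\kappa\equiv 0$ by form monotonicity) are sound, though the cut at $a$ is not strictly necessary since $0$ is regular. The paper's argument buys self-containedness --- one elementary integration by parts plus a citation of Hartmann's theorem --- while yours buys conceptual clarity and generality: it identifies the entrance condition with the decay of the Muckenhoupt functional and essentially recovers the general discreteness criterion of \cite{CR02} that the paper itself points to in Remark \ref{logistic}. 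The one external ingredient you must cite with care is the compactness (not merely boundedness) half of the weighted Hardy theorem, which is classical but is not contained in \cite{bM72}.
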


\begin{pf}
Assume that $0$ is a regular boundary point, and we begin by
considering the case $\kappa\equiv0$. Let $f$ be a solution to the
eigenvalue equation $-\frac{1}{2\rrho} (\rrho f')'=\lambda f$ on
$(0,\infty)$, for some $\lambda>0$.

Let $x>1$ be any local maximum, and $\tilde{x}>x$ the first local
minimum following $x$ (assuming there is one). Since $f$ solves the
equation $\tau u = \lambda u$ one easily sees that local maxima are
positive and local minima negative. Integrating by parts and using the
fact that $f'(x)=f'(\tx)=0$, we have
\begin{eqnarray*}
0&<&f(x) - f(\tx) = \int_{\tx}^{x}f'(y)\,dy\\
&=&\int_{\tx}^{x}(\rrho(y)f'(y)) \rrho(y)^{-1}\,dy\\
&=&\int_{x}^{\tx}(\rrho(y)f'(y))' \int_{1}^{y}\rrho(z)^{-1}\,dz \,dy\\
&=&-2\lambda\int_{x}^{\tx}\rrho(y)f(y) \int_{1}^{y}\rrho(z)^{-1}\,dz \,dy\\
&<&2\lambda \bigl(f(x)-f(\tx)  \bigr)\int_{x}^{\tx}\rrho(y) \int_{1}^{y}\rrho(z)^{-1}\,dz \,dy,
\end{eqnarray*}
from which we conclude that
\[
\frac{1}{2\lambda}<\int_{x}^{\tx}\rrho(y) \int_{1}^{y}\rrho
(z)^{-1}\,dz \,dy.
\]
Since we have assumed that $\infty$ is an entrance boundary, we know that
\begin{eqnarray*}
\infty&>&\int_{1}^{\infty}\rrho(y) \int_{1}^{y}\rrho(z)^{-1}\,dz \,dy\\
&\ge&\sum_{\mathrm{pairs }\ (x,\tx)}\int_{x}^{\tx}\rrho(y) \int
_{1}^{y}\rrho(z)^{-1}\,dz \,dy\\
&\ge&(2\lambda)^{-1} \cdot\#\mbox{pairs }(x,\tx).
\end{eqnarray*}
Since the zeroes of $f$ are separated by alternating local minima and
maxima, it follows that $f$ has only finitely many zeroes on $(1,\infty
)$, hence only finitely many zeroes in all. It follows from a theorem
of Hartmann [\citet{jW67}, Theorem~1.1] that the spectrum of $L^{0}$ (the
operator with $\kappa\equiv0$) is discrete.

Suppose now that the spectrum of $L^{\kappa}$ is not discrete. Then
there is a $\lambda_{*}$ such that $E_{\lambda_{*}}$ has
infinite-dimensional range. All such $v \in\operatorname
{Ran}(E_{\lambda_{*}})$ are in the domain of $q^{\kappa}$ and satisfy
$q^{\kappa}(v,v)\le\lambda_{*}\|v\|^{2}$. But then they are also in
the domain of $q^{0}$ and satisfy $q^{0}(v,v)\le\lambda_{*}\|v\|
^{2}$. By the minimax principle for the discrete spectrum [cf.
\citeauthor{jW00} (\citeyear{jW00}), Theorem 8.8],  this contradicts the fact that $L$ has discrete spectrum.
\end{pf}

\begin{remark}\label{logistic}
There are general necessary and sufficient conditions for the
discreteness of the spectrum of Sturm--Liouville operators obtained in
\citet{CR02}, of which Theorem \ref{nichtnatuerlich} may be seen as
a special case. However, general versions found in the literature, such
as the main result in \citet{CR02} and Theorem 1 in \citet{rP09},
do not seem to be immediately applicable.
\end{remark}

\section{Convergence to quasistationarity} \label{sec:qstat}
In this section we consider the problem of convergence to the Yaglom
limit. More precisely we ask for conditions, which ensure that $X_t$
converges to the quasistationary distribution given by the density
$\varphi(\lambda_0^{\kappa},\cdot)$. Recall that we always assume
that $0$ is regular.

\subsection{The asymptotic measure and asymptotic killing rate} \label
{sec:asympt}
We begin by collecting some basic results about the asymptotic
distribution of the process on sets which may not be compact. These
results hold whenever $\lkap\ne K$, but will be required primarily in
Section \ref{sec:lowkill}, where $\lkap>K$.

As in \citet{quasistat} we define, for Borel sets $A$, the family of measures
\[
F_t(\nu,A) = \mathbb{P}_{\nu} (X_t \in A |\tau_{\partial}
> t  )
\]
and
\[
a_t(\nu,r) = \mathbb{P}_{\nu} ( \tau_{\partial} > t+r |
\tau_{\partial}> t ) = \int F_t(\nu,dy)\mathbb{P}_y (
\tau_{\partial} > r ).
\]
If the process $X_t$ started from the compactly supported initial
distribution~$\nu$ escapes to infinity, then for any sequence
$(t_n)_{n \in\mathbb{N}}$ converging to infinity the measures
$F_{t_n}(\nu,\cdot)$ converge weakly to point the measure $\delta
_{\infty}$. If the process $X_t$ started from $\nu$ converges to the
quasistationary distribution $\varphi$ then then the limit of
$F_{t_n}(\nu,dy)$ is concentrated on $\mathbb{R}_+$, and has the
density $\frac{\varphi(\lambda_0^{\kappa},\cdot)}{\int_0^{\infty
}\varphi(\lambda_0^{\kappa},y) \rrho(y)\,dy}$ with respect to $\Rho$.
The next lemma is in essence a combination of Lemma~5.3 and
Theorem~3.3 in \citet{quasistat}, together with our Lemma~\ref{qsdcomp}.

\begin{lemma}\label{SEcomb}
Assume that $\infty$ is a natural boundary point and suppose that
$\lambda^{\kappa}_0 \ne K$. Then the limit $a(\nu,r) = \lim
_{t\rightarrow\infty}a_{t}(\nu,r)$ exists, and satisfies
%
\begin{eqnarray}\label{formelfuera}
a(\nu,r) &=& F(\nu,\mathbb{R}_+) \int\varphi(\lambda_0^{\kappa
},y)\mathbb{P}_y (\tau_{\partial}>r ) \rrho(y)\,dy \nonumber\\[-8pt]\\[-8pt]
&&{}+\bigl(1-F(\nu,\mathbb{R}_+)  \bigr) e^{-Kr}.\nonumber
\end{eqnarray}
Either $F(\nu,\mathbb{R}_+) = 0$ for every compactly supported
initial distribution $\nu$ or $F(\nu,\mathbb{R}_+) = 1$ for every
such $\nu$.

There exists $\eta_{\nu} \in\mathbb{R}$ (called the \textup
{asymptotic mortality rate}) such that
%
\begin{equation}\label{eta}
a(\nu,r) = e^{-\eta_{\nu} r}.
\end{equation}
If the process escapes to infinity then $\eta_{\nu}=K$.
\end{lemma}

\begin{pf}
Let $\nu$ be a compactly supported initial distribution. Let $(t_n)
\subset(0,\infty)$ be a sequence converging to infinity. On the
compactification $[0,\infty]$ of $(0, \infty)$ the sequence of
measures $F_{t_n}(\nu,dy)$ has a limit point. By Theorem~\ref{Thm33}
this limit point is either a measure on $(0,\infty)$ which has the
density $\frac{\varphi(\lambda_0^{\kappa},\cdot)}{\int_0^{\infty
}\varphi(\lambda_0^{\kappa},y) \rrho(y)\,dy}$\vspace*{1pt} with respect to the
measure $\Rho$ or is the point mass at $\infty$. Theorem \ref{Thm33}
shows that there is only one limit point, and that the limit point is
independent of the sequence $(t_n)$ and the initial distribution $\nu$.
Thus $F_t(\nu,dy)$ converges weakly. If $\infty$ is natural, then
according to Proposition~3.1 in combination with Proposition 4.3 in
\citet{A74} the unkilled diffusion process ($\kappa= 0$) satisfies
$\lim_{y \rightarrow\infty}\mathbb{P}_{y}(T_a \leq t)=0$ for every
$a>0$. Due to our assumption on $\kappa$ we are given $\varepsilon>
0$ and sufficiently large $a>0$, and we therefore get ($x \geq a$)
\begin{eqnarray*}
e^{-(K-\varepsilon)t} &\leq&\mathbb{P}_x(\tau_{\partial}>t) =
\mathbb{E}_x \bigl[e^{-\int_0^t \kappa(X_s)\,ds};T_a\leq t \bigr]+
\mathbb{E}_x \bigl[e^{-\int_0^t \kappa(X_s)\,ds};T_a > t \bigr]\\
&\leq&\varepsilon+ e^{-(K+\varepsilon)t}.
\end{eqnarray*}
Thus we conclude that
\[
\lim_{y \rightarrow\infty}\mathbb{P}_y (\tau_{\partial}>
r ) = e^{-Kr}.
\]
This shows that
\begin{eqnarray*}
\lim_{t \rightarrow\infty}\int F_t(\nu,dy)\mathbb{P}_y ( \tau
_{\partial} > r ) &=& F(\nu,\mathbb{R}_+) \int\varphi(\lambda
_0^{\kappa},y)\mathbb{P}_y (\tau_{\partial}>r ) \rrho
(y)\,dy \\
&&{} +  \bigl(1-F(\nu,\mathbb{R}_+) e^{-Kr} \bigr),
\end{eqnarray*}
which is \eqref{formelfuera}.

For any $r,s\ge0$ we have
\begin{eqnarray*}
a(\nu,r+s) &=& \lim_{t \rightarrow\infty} \frac{\mathbb{P}_{\nu
}(\tau_{\partial} > t+r+s)}{\mathbb{P}(\tau_{\partial}>t)} \\
&=& \lim_{t \rightarrow\infty} \frac{\mathbb{P}_{\nu}(\tau
_{\partial} > t+r+s)}{\mathbb{P}_{\nu}(\tau_{\partial} > t+s)}
\frac{\mathbb{P}_{\nu}(\tau_{\partial}>t+s)}{\mathbb{P}_{\nu
}(\tau_{\partial}>t)}\\
&=&  \biggl(\lim_{t \rightarrow\infty} \frac{\mathbb{P}_{\nu}(\tau
_{\partial} > t+r+s)}{\mathbb{P}_{\nu}(\tau_{\partial} >
t+s)} \biggr)    \biggl(\lim_{t \rightarrow\infty} \frac{\mathbb
{P}_{\nu}(\tau_{\partial}>t+s)}{\mathbb{P}_{\nu}(\tau_{\partial
}>t)} \biggr)\\
&=& a(\nu,r)a(\nu,s),
\end{eqnarray*}
which directly implies \eqref{eta}. The final statement follows
directly from \eqref{formelfuera}.
\end{pf}

The quantity $\eta_{\nu}$ plays an important role. The reason for
this consists of the implication
%
\begin{equation}\label{elementaryfact}
\lim_{t \rightarrow\infty}\frac{\mathbb{P}_{\nu} (\tau
_{\partial}>t+r )}{\mathbb{P}_{\nu} (\tau_{\partial
}>t )}=e^{-\eta_{\nu}r} \quad \Rightarrow\quad -\lim_{t\rightarrow\infty
}\mathbb{P}_{\nu} (\tau_{\partial}>t ) = \eta_{\nu},
\end{equation}
whose elementary proof is left to the reader. Thus in order to decide,
whether $X_t$ converges to the quasistationary distribution, we
investigate the asymptotic behavior of the function $r \mapsto\mathbb
{P}_{\nu} (\tau_{\partial} > r ) $, as $r \rightarrow
\infty$.

\begin{lemma}\label{exporder}
Suppose that $\Rho$ is a finite measure. Then for any compactly
supported initial distribution $\nu$ we have
\[
-\lim_{t\rightarrow\infty}\frac{1}{t}\log\mathbb{P}_{\nu}
(\tau_{\partial} > t ) = \lambda_0^{\kappa}.\vadjust{\goodbreak}
\]
\end{lemma}

\begin{pf}
Since $\Rho$ is finite, the constant function $\mathbf{1}$ is in $\mL
^2$. Lemma \ref{L:sqrtcompare} implies
%
\begin{eqnarray}
\limsup_{t \rightarrow\infty}\frac{1}{t}\log\mathbb{P}_{\nu
} (\tau_{\partial}> t ) &=&\limsup_{t \rightarrow\infty
}\frac{1}{t}\log\int
e^{-tL^{\kappa}}\indic(x)\,d\nu(x)\nonumber\\[-8pt]\\[-8pt]
&\leq&- \lambda_0^{\kappa}.\nonumber
\end{eqnarray}

Now we need a corresponding lower bound. Fix any $z>0$, and let $\mc
{I}:=\{x\dvtx|x-z|<\frac{1}{2}\wedge\frac{z}{4}\}$.
By \eqref{E:harnack2} there is a constant $C(z)=\sup_{w\in\mc{I}}
\zeta(w)$ such that
%
\begin{eqnarray}\label{lowerbound}
 \| e^{-{t}/{2}L^{\kappa}}\indic_{\mc{I}}  \|^{2}&=&
 \langle e^{-tL^{\kappa}}\indic_{\mc{I}} , \indic_{\mc{I}}
 \rangle\nonumber \\
&=&\int_{\mc{I}} \int_{\mc{I}} p^{\kappa}(t,x,y)\,d\Rho(y)\,d\Rho
(x)\nonumber\\[-8pt]\\[-8pt]
&\leq& C(z) \int_{\mc{I}} p^{\kappa}(t+1,z,y) \rrho(y)\,dy \nonumber \\
&\le& C(z) \mathbb{P}_z  (\tau_{\partial}>t+1 ).\nonumber
\end{eqnarray}
By Lemma \ref{L:lowspec} and Lemma \ref{L:supportbase} (using that
$\mathbf{1}_{\mc{I}}$ is strictly positive on $\mc{I}$) we see that
\[
\liminf_{t \rightarrow\infty}\frac{1}{t}\log\mathbb{P}_{z}
(\tau_{\partial}> t ) \ge-\inf\supp d\|E^{\kappa}\mathbf
{1}_\mc{I}\|^2(\lambda)=-\lambda_{0}^{\kappa}.
\]
Since by Proposition \ref{P:Harnackconstantlimit} the exponential rate
of decay of $\mathbb{P}_z  (\tau_{\partial}>t )$ is
locally constant in $z$ this completes the proof.
\end{pf}

\subsection{\texorpdfstring{High killing at $\infty$}{High killing at infinity}}
In this section we consider the case where the asymptotic killing rate
is strictly bigger than $\lambda^{\kappa}_0$. Theorem \ref{Thm33}
shows that one has convergence to the quasistationary distribution if
and only if the lowest eigenfunction is integrable. We give a proof of
this assertion and moreover prove that the lowest eigenfunction is
actually always integrable. Therefore $\liminf\kappa>\lambda^{\kappa
}_0$ always implies convergence to the quasistationary distribution. In
contrast to \citet{quasistat}, we do not need to assume that $\infty
$ is a natural boundary. Thus $\infty$ is only assumed to be
inaccessible. Since according to Lemma \ref{spectrum}\hyperlink{it:isolated}{(v)} the bottom of the spectrum
is an isolated eigenvalue, the corresponding eigenfunction is
square-integrable, as well as $\lambda_0^{\kappa}$-invariant.

\begin{theorem}\label{limkbiggerev}
Suppose that $\liminf_{x\rightarrow\infty}\kappa(x) > \lambda
_0^{\kappa}$. Then we have for every Borel set $U \subset(0,\infty)$
%
\begin{equation} \label{E:recurrent}
\lim_{t\rightarrow\infty}e^{\lambda^{\kappa}_0 t}\mathbb{P}^x(X_t
\in U; \tau_{\partial} > t) = u_{\lambda^{\kappa}_0}(x)\int_U
u_{\lambda^{\kappa}_0}(y) \rrho(y)\,dy,
\end{equation}
where $u_{\lambda_0^{\kappa}} \in\mL^2$ denotes the uniquely
determined (up to positive multiples) eigenfunction associated\vadjust{\goodbreak} to the
eigenvalue $\lambda_0^{\kappa}$. Moreover, the process $(X_t)$
associated to the Dirichlet form $q^{\kappa}$ converges to the
quasistationary distribution $u_{\lambda_0^{\kappa}}$.
\end{theorem}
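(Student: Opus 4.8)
The plan is to extract from the hypothesis the spectral structure at the bottom of the spectrum, and then reduce the theorem to a single analytic statement about the ground state together with a ``no escape of mass'' estimate. Write $u:=u_{\lkap}$ for the ground state, normalised so that $\|u\|=1$. Since $\liminf_{x\to\infty}\kappa(x)>\lkap$, Lemma~\ref{spectrum}(\ref{it:isolated}) tells us that $\lkap$ is a simple isolated eigenvalue; hence $u\in\mL^2$ is a genuine eigenfunction, is strictly positive on $(0,\infty)$, and is $\lkap$-invariant, i.e.\ $e^{-tL^{\kappa}}u=e^{-\lkap t}u$. The isolation also produces a spectral gap $\lambda_{1}:=\inf\bigl(\Sigma(L^{\kappa})\setminus\{\lkap\}\bigr)>\lkap$, and Remark~\ref{heatasympt} gives the pointwise asymptotic $e^{\lkap t}p^{\kappa}(t,x,y)\to u(x)u(y)$, which by the parabolic Harnack inequality \eqref{E:harnack} holds locally uniformly. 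Using the Feynman--Kac representation \eqref{FKrepresent} in the form $\P^{x}(\tp>t)=\izf p^{\kappa}(t,x,y)\rrho(y)\,dy$, the asymptotic \eqref{E:recurrent} is precisely the assertion that this integral may be passed to the limit term by term, and convergence to the quasistationary distribution will then follow by taking ratios.

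The first ingredient, and the genuinely new one, is the integrability $\izf u(y)\rrho(y)\,dy<\infty$. If $\Rho(\R_{+})<\infty$ this is immediate from Cauchy--Schwarz, since $u\in\mL^2$. In general I would argue from the ODE: the eigenfunction equation $L^{\kappa}u=\lkap u$ reads $(\rrho u')'=2\rrho(\kappa-\lkap)u$. Choose $a_{0}$ and $\epsilon>0$ with $\kappa(y)-\lkap\ge 2\epsilon$ for $y\ge a_{0}$; since $u>0$ this forces $(\rrho u')'\ge 4\epsilon\,\rrho u>0$ on $[a_{0},\infty)$, so $\rrho u'$ is strictly increasing there. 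Were $\rrho u'\ge 0$ at some point, $u$ would be eventually nondecreasing and bounded below by a positive constant, giving $\izf u^{2}\rrho=\infty$ when $\Rho$ is infinite; hence $\rrho u'<0$ throughout $[a_{0},\infty)$ and $\rrho u'$ increases to a finite limit $\ell\le 0$. Integrating $(\rrho u')'\ge 4\epsilon\,\rrho u$ over $[a_{0},\infty)$ then yields $4\epsilon\int_{a_{0}}^{\infty}\rrho u\le \ell-\rrho(a_{0})u'(a_{0})<\infty$, so $u$ is integrable against $\Rho$.

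The heart of the matter, and the step I expect to be the main obstacle, is passing to the limit under the $y$-integral, equivalently showing that no exponentially weighted mass escapes to infinity. Splitting $\indic=\indic_{[0,N]}+\indic_{(N,\infty)}$, the head term is controlled spectrally: since $\indic_{[0,N]}\in\mL^2$, the difference $g_{t}:=e^{\lkap t}e^{-tL^{\kappa}}\indic_{[0,N]}-\bigl(\int_{0}^{N}u\rrho\bigr)u$ has spectral measure supported in $[\lambda_{1},\infty)$, so $\|g_{t}\|$ and $\|\sqrt{L^{\kappa}}g_{t}\|$ decay like $e^{-(\lambda_{1}-\lkap)t}$, and Lemma~\ref{L:sqrtcompare} gives $g_{t}(x)\to 0$ locally uniformly. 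Letting $N\to\infty$ and applying Fatou to the nonnegative tail already gives $\liminf_{t}e^{\lkap t}\P^{x}(\tp>t)\ge u(x)\izf u\rrho$. The reverse inequality requires $\lim_{N\to\infty}\limsup_{t}e^{\lkap t}\,\P^{x}\bigl(X_{t}>N,\ \tp>t\bigr)=0$. When $\Rho(\R_{+})<\infty$ one sidesteps the tail entirely by applying the head argument to $\indic\in\mL^2$ directly. When $\Rho(\R_{+})=\infty$, $\indic_{(N,\infty)}\notin\mL^2$ and this is the crux: I would exploit $\kappa\ge\lkap+2\epsilon$ on $[a_{0},\infty)$ through the Dirichlet realisation $L^{\kappa}_{a_{0}}$ on $(a_{0},\infty)$, whose spectrum lies above $\lkap+\epsilon$ by the variational estimate already carried out in the proof of Lemma~\ref{spectrum}(\ref{it:isolated}); a last-exit decomposition at $a_{0}$ then shows that the surviving mass reaching $(N,\infty)$ decays at the strictly faster rate $\lkap+\epsilon$, which beats the $e^{\lkap t}$ weighting. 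Turning this decomposition into a clean uniform-in-$t$ bound is the hardest part of the argument.

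Granting the two estimates, the theorem follows. For a compactly supported initial distribution $\nu$ and a bounded Borel set $A$, the head argument (now applied to $\indic_{A}\in\mL^2$) together with Lemma~\ref{L:sqrtcompare}, which licenses integrating the locally uniform pointwise limit against $\nu$, gives $e^{\lkap t}\langle\nu,e^{-tL^{\kappa}}\indic_{A}\rangle\to\bigl(\int_{A}u\rrho\bigr)\langle\nu,u\rangle$; integrating \eqref{E:recurrent} against $\nu$ gives the denominator $e^{\lkap t}\P_{\nu}(\tp>t)\to\bigl(\izf u\rrho\bigr)\langle\nu,u\rangle$. Dividing yields $\P_{\nu}(X_{t}\in A\mid\tp>t)\to\int_{A}u\rrho\big/\izf u\rrho$, and the extension to unbounded $A$ follows by splitting at $N$ and invoking the same no-escape bound. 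This is exactly convergence to the quasistationary distribution with density $u/\izf u\rrho$.
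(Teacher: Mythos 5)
Your overall architecture matches the paper's --- reduce the theorem to (a) integrability of $u_{\lkap}$ with respect to $\Rho$ and (b) passage to the limit in $e^{\lkap t}\izf p^{\kappa}(t,x,y)\rrho(y)\,dy$ --- and your proof of (a) in the infinite-$\Rho$ case is correct and genuinely different from the paper's. You integrate the eigenvalue ODE $(\rrho u')'=2\rrho(\kappa-\lkap)u$ directly, using monotonicity of $\rrho u'$ beyond $a_{0}$ and square-integrability of $u$ to force $\rrho u'<0$ there; the paper (Lemma \ref{integrability}) instead argues probabilistically, via optional sampling of the Feynman--Kac martingale at $T_{a}\wedge T$, the potential-theoretic identity $\E_x\bigl[e^{-\varepsilon T_a}\bigr]=C(a)g^{\varepsilon}(x,a)$, and symmetry of the kernel. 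Your ODE argument is shorter and self-contained in one dimension; the paper's route is the one that survives outside the Sturm--Liouville setting.

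The gap is exactly where you flag it: the uniform-in-$t$ tail bound $\lim_{N}\limsup_{t}e^{\lkap t}\,\P_{x}\bigl(X_{t}>N,\ \tp>t\bigr)=0$ when $\Rho$ is infinite. Your sketch via the Dirichlet realisation on $(a_{0},\infty)$ reproduces, for the sub-process, the very $\mL^{1}$-versus-$\mL^{2}$ difficulty you are trying to resolve ($\indic_{(N,\infty)}$ is still not square-integrable), and the last-exit decomposition is never carried out. The paper (Lemma \ref{L:L1L2}) closes this with a single move for which you already possess every ingredient: write $p^{\kappa}(t,x,y)$ as the $u_{\lkap}$-weighted average of $p^{\kappa}(t,\tilde{x},y)$ over a small ball $B_{r}(x)$, apply the Harnack bound \eqref{E:harnack2}, and use the invariance $\int p^{\kappa}(t+1,\tilde{x},y)u_{\lkap}(\tilde{x})\rrho(\tilde{x})\,d\tilde{x}=e^{-(t+1)\lkap}u_{\lkap}(y)$ to obtain $e^{\lkap t}p^{\kappa}(t,x,y)\le C(x)\,u_{\lkap}(y)$ uniformly in $t$ and $y$. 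Since you have already shown $u_{\lkap}\in\mL^{1}(\Rho)$, dominated convergence together with \eqref{E:heatasympt} then yields \eqref{E:recurrent} in one step --- no head/tail splitting, no Fatou, no last-exit decomposition --- and the same domination disposes of unbounded sets $A$ in the quasistationary convergence. Replace your last-exit sketch with this domination argument and the proof is complete.
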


The theorem will be the direct consequence of two lemmas: Lemma \ref
{L:L1L2}, which states that quasilimiting convergence follows whenever
the eigenfunction $u_{\lkap}$ is in $\mL^{2}$ and $\mL^{1}$; and
Lemma \ref{integrability}, which states that $u_{\lkap}$ is indeed
in~$\mL^{1}$ when $\liminf_{x\to\infty}\kappa(x)>\lkap$.

\begin{lemma}\label{L:L1L2}
Suppose $u_{\lkap}\in\mL^{1}\cap\mL^{2}$. Then \eqref
{E:recurrent} holds, and $X_t$ converges to the quasistationary
distribution $u_{\lkap}(y)\Rho(dy)/\int_0^{\infty}u_{\lkap}(y)\Rho(dy)$.
\end{lemma}

\begin{pf}
We know from Lemma \ref{spectrum} [part \hyperlink{it:isolated}{(v)} that
$\lambda^{\kappa}_0$ is an isolated eigenvalue. Therefore, the
eigenfunction $u_{\lambda^{\kappa}_0}$ is square integrable and satisfies
%
\begin{equation} \label{E:L2ae}
 e^{-tL^{\kappa}}u_{\lambda^{\kappa}_0} = e^{-t\lambda^{\kappa
}_0}u_{\lambda^{\kappa}_0} \mbox{ in }\mL^{2},\mbox{ hence
identically (since $u_{\lkap}$ is continuous)}.\hspace*{-35pt}
\end{equation}
By \eqref{E:harnack2}, for $r>0$ sufficiently small,
\begin{eqnarray*}
p^{\kappa}(t,x,y) &=& \frac{\int_{B_r(x)}p^{\kappa}(t,x,y)u_{\lambda
^{\kappa}_0}(\tilde{x}) \rrho(\tilde{x})\,d\tilde{x}}{\int
_{B_r(x)}u_{\lambda^{\kappa}_0}(\tilde{x}) \rrho(\tilde
{x})\,d\tilde{x}} \\
&\leq&\zeta(x)\frac{\int_{B_r(x)}p^{\kappa}(t+1,\tilde
{x},y)u_{\lambda^{\kappa}_0}(\tilde{x}) \rrho(\tilde{x})\,d\tilde
{x}}{\int_{B_r(x)}u_{\lambda^{\kappa}_0}(\tilde{x}) \rrho(\tilde
{x})\,d\tilde{x}} \\
&\leq&\zeta(x) \frac{e^{-(t+1)\lambda^{\kappa}_0}u_{\lambda
^{\kappa}_0}(y)}{\int_{B_r(x)}u_{\lambda^{\kappa}_0}(\tilde{x})
\rrho(\tilde{x})\,d\tilde{x}}.
\end{eqnarray*}
For fixed $x$, $p^{\kappa}(t,x,y)e^{t\lkap}$ is dominated by a
constant times $u_{\lkap}(y)$, which is in~$\mL^{1}$.
The dominated convergence theorem, together with \eqref{E:heatasympt},
implies that there is a constant $c$ such that for any Borel set $U$,
%
\begin{eqnarray}\label{laststeptoqsd}
\qquad \lim_{t\rightarrow\infty}e^{\lambda^{\kappa}_0 t}\mathbb
{P}_{x} (X_t \in U , \tau_{\partial} > t  ) &=&\lim
_{t\rightarrow\infty} \int_0^{\infty} e^{\lambda^{\kappa}_0
t}p^{\kappa}(t,x,y)\mathbf{1}_U(y)
\rrho(y)\,dy\nonumber\\[-8pt]\\[-8pt]
&=& c u_{\lambda^{\kappa}_0}(x) \int_Uu_{\lambda^{\kappa}_0}(y)
\rrho(y) \,dy.\nonumber
\end{eqnarray}
Taking quotients,
\begin{eqnarray*}
\lim_{t\rightarrow\infty}\mathbb{P}^x ( X_t \in U|\tau
_{\partial} > t ) &=& \lim_{t\rightarrow\infty}\frac{\mathbb
{P}^x ( X_t \in U, \tau_{\partial} > t )}{\mathbb
{P}^x (\tau_{\partial} > t )} \\
&=& \frac{\lim_{t\rightarrow\infty}e^{\lambda^{\kappa}_0t}\mathbb
{P}^x (X_t \in U, \tau_{\partial} > t )}{\lim
_{t\rightarrow\infty}e^{\lambda^{\kappa}_0t}\mathbb{P}^x
(\tau_{\kappa} > t )} \\
&=& \frac{c\int_Uu_{\lambda^{\kappa}_0}(y) \rrho(y)\,dy}{c\int
_0^{\infty}u_{\lambda^{\kappa}_0}(y) \rrho(y)\,dy}.
\end{eqnarray*}
\upqed
\end{pf}\eject

For the second part of the proof we apply an argument used in \citet
{CL90} to derive properties of eigenfunctions of Schr\"{o}dinger
operators. Some modification is required to deal with the complication
that we have a domain with boundary, and we do not know a priori that
the eigenfunctions are bounded. The one-dimensional setting helps us to
overcome these complications.

\begin{lemma}\label{integrability}
Assume that $\lambda^{\kappa}_0<K := \liminf_{x \rightarrow\infty
}\kappa(x)$. Then the square integrable nonnegative eigenfunction
$u_{\lambda^{\kappa}_0}$ associated to the isolated
eigenvalue~$\lambda^{\kappa}_0$ is integrable with respect to the measure $\Rho$.
\end{lemma}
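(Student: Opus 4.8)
The plan is to work directly with the second-order ODE satisfied by $u:=u_{\lkap}$ in Sturm--Liouville form and to exploit the sign of $\kappa-\lkap$ near infinity, rather than passing to an associated Schr\"odinger operator (which would introduce the unwanted term $b'$ and require control we do not have). Since $\lkap<\liminf_{x\to\infty}\kappa(x)$, I first fix $a_{0}>0$ and $\varepsilon>0$ with $\kappa(x)-\lkap\ge\varepsilon$ for all $x\ge a_{0}$. By Lemma \ref{spectrum}~(\ref{it:isolated}) the eigenfunction $u$ is strictly positive, and rewriting $L^{\kappa}u=\lkap u$ in the form $-\frac{1}{2\rrho}(\rrho u')'+\kappa u=\lkap u$ gives
\begin{equation*}
(\rrho u')'(x)=2\rrho(x)\bigl(\kappa(x)-\lkap\bigr)u(x)\ge 2\varepsilon\,\rrho(x)u(x)\ge 0\qquad(x\ge a_{0}).
\end{equation*}
Writing $v:=\rrho u'$, this says that $v$ is non-decreasing on $[a_{0},\infty)$.

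I would then split according to whether $\Rho\bigl((0,\infty)\bigr)=\izf\rrho(x)\,dx$ is finite. If it is finite the claim is immediate, since Cauchy--Schwarz gives $\izf u(y)\rrho(y)\,dy\le\|u\|\,\Rho\bigl((0,\infty)\bigr)^{1/2}<\infty$. So the substantive case is $\izf\rrho\,dx=\infty$, and there the decisive step is to show $v\le 0$ on $[a_{0},\infty)$, i.e.\ that $u$ is non-increasing. Indeed, if $v(x_{0})>0$ for some $x_{0}\ge a_{0}$, then monotonicity of $v$ forces $u'=v/\rrho>0$ on $[x_{0},\infty)$, hence $u(x)\ge u(x_{0})>0$ and
\begin{equation*}
\int_{x_{0}}^{\infty}u(y)^{2}\rrho(y)\,dy\ge u(x_{0})^{2}\int_{x_{0}}^{\infty}\rrho(y)\,dy=\infty,
\end{equation*}
contradicting $u\in\mL^{2}$.

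Once $v\le 0$ is established, $v$ is non-decreasing and bounded above by $0$, so it has a finite limit $v_{\infty}\in(-\infty,0]$. Integrating the differential inequality from $a_{0}$ and using $\kappa-\lkap\ge\varepsilon$ yields
\begin{equation*}
2\varepsilon\int_{a_{0}}^{\infty}u(y)\rrho(y)\,dy\le\int_{a_{0}}^{\infty}(\rrho u')'(y)\,dy=v_{\infty}-v(a_{0})\le -v(a_{0})<\infty,
\end{equation*}
so $\int_{a_{0}}^{\infty}u\,\rrho<\infty$; since $u$ is continuous and $\rrho$ is locally integrable, $\int_{0}^{a_{0}}u\,\rrho<\infty$ as well, and adding the two bounds gives $u\in\mL^{1}$.

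The main obstacle is the middle step---ruling out an eigenfunction that eventually increases. The point is that positivity of $u$ together with the strict killing gap $\kappa-\lkap\ge\varepsilon>0$ makes the flux $v=\rrho u'$ monotone, and the only way to reconcile this monotonicity with square-integrability, when $\rrho$ has infinite total mass, is for $v$ to remain non-positive and bounded. This is precisely where the one-dimensional structure supplies what the a priori boundedness hypotheses provide in the Schr\"odinger setting of \cite{CL90}. After that, integrating the differential inequality converts the strict positivity of $\kappa-\lkap$ into the desired $\mL^{1}$ bound essentially for free.
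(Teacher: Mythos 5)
Your proof is correct, and it takes a genuinely different route from the paper's. The paper argues probabilistically: it feeds the eigenvalue identity through the Feynman--Kac representation \eqref{invariance}, applies optional sampling to the martingale $M_t$ at the hitting time $T_a$ of $[0,a]$, kills the first term using Lemma \ref{L:sqrtcompare} to obtain the pointwise bound $u_{\lkap}(x)\le u_{\lkap}(a)\,\E_x\bigl[e^{-\varepsilon T_a}\bigr]$, and then integrates this against $\Rho$ via the potential-theoretic identity \eqref{E:potential} and the symmetry of $p(t,x,y)$, which gives $\izf g^{\varepsilon}(x,a)\rrho(x)\,dx\le\varepsilon^{-1}$. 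You instead stay entirely inside the Sturm--Liouville ODE: the killing gap makes the flux $v=\rrho u'$ monotone on $[a_0,\infty)$, square-integrability rules out $v$ ever becoming positive when $\Rho$ is infinite (with the finite-$\Rho$ case dispatched by Cauchy--Schwarz), and integrating $(\rrho u')'\ge 2\varepsilon\rrho u$ converts the resulting bounded variation of $v$ into the $\mL^{1}$ bound $\int_{a_0}^{\infty}u\,\rrho\le -v(a_0)/(2\varepsilon)$. Your argument is more elementary and self-contained --- it needs no Feynman--Kac formula, no optional sampling, and no Green's-function estimate, only that $u$ and $\rrho u'$ are absolutely continuous solutions of the eigenvalue ODE and that $u$ is positive (Lemma \ref{spectrum}~(\ref{it:isolated})). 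What the paper's route buys in exchange is a pointwise decay estimate $u_{\lkap}(x)\le u_{\lkap}(a)\,\E_x[e^{-\varepsilon T_a}]$ rather than just an integral one, and a mechanism (eigenfunction decay governed by the Green's function where the potential is negligible, as noted in the remark following the lemma) that survives outside the one-dimensional setting, where your flux-monotonicity argument has no analogue. One cosmetic point: you appeal to strict positivity of $u$ at $x_0$; this is indeed supplied by Lemma \ref{spectrum}~(\ref{it:isolated}) on the open half-line, and even without it your contradiction survives since $v>0$ on $[x_0,\infty)$ forces $u$ to be strictly increasing, hence bounded below by a positive constant on $[x_0+1,\infty)$.
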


\begin{pf}
By \eqref{E:L2ae} and the Feynman--Kac formula
%
\begin{equation}\label{invariance}
e^{-\lambda^{\kappa}_0 t} u_{\lambda_0}(x) = \mathbb{E}_x
\bigl[e^{-\int_0^t\kappa(X_s^{*})\,ds}u_{\lambda^{\kappa
}_0}(X_t^{*}),T_0 > t \bigr],
\end{equation}
for every $x \in[0,\infty)$, where $X^{*}_{s}$ is the diffusion which
is killed only at the boundary. For $t \geq0$ we define the martingale
\[
M_t = e^{-\int_0^t(\kappa-\lambda^{\kappa}_0)(X^*_s)\,ds}u_{\lambda
^{\kappa}_0}(X^*_t)\mathbf{1}_{\lbrace T_0 > t\rbrace}.
\]
By the assumption $\lambda^{\kappa}_0 < K$ there exist positive real
numbers $a$ and $\varepsilon$ such that $\kappa(x) - \lambda^{\kappa
}_0 > \varepsilon$ for every $x \in[a, \infty)$. Let $T_a$ be the
first hitting time of the set $[0,a]$.

By the optional sampling theorem we get for every $T>0$ and $x > a$
%
\begin{eqnarray}\label{optionalsampling}
u_{\lambda^{\kappa}_0}(x) &=& \mathbb{E}_x \bigl[ e^{-\int_0^{T_a
\wedge T}(\kappa-\lambda^{\kappa}_0)(X^*_s)\,ds}u_{\lambda^{\kappa
}_0}(X^*_{T_a \wedge T})\mathbf{1}_{\lbrace T_0 > T_a \wedge T\rbrace
} \bigr] \nonumber\\
&=& \mathbb{E}_x \bigl[e^{-\int_0^{T}(\kappa-\lambda^{\kappa
}_0)(X^*_s)\,ds}u_{\lambda^{\kappa}_0}(X^*_{T})\mathbf{1}_{\lbrace
T_a>T \rbrace} \bigr] \nonumber\\[-8pt]\\[-8pt]
&&{} + \mathbb{E}_x \bigl[e^{-\int_0^{T_a}(\kappa-\lambda
^{\kappa}_0)(X^*_s)\,ds}u_{\lambda^{\kappa}_0}(a)\mathbf
{1}_{\lbrace T_a \leq T \rbrace} \bigr] \nonumber \\
&\le& e^{-\varepsilon T} \mathbb{E}_x \bigl[u_{\lambda^{\kappa
}_0}(X^*_T)\mathbf{1}_{\lbrace T_0 > T\rbrace} \bigr]+ u_{\lambda
^{\kappa}_0}(a) \mathbb{E}_x [e^{-\varepsilon T_{a}\wedge
T} ] .\nonumber
\end{eqnarray}

By Lemma \ref{L:sqrtcompare} and the spectral theorem \eqref
{spectraltheorem3} the first term is bounded by
%
\begin{eqnarray}\label{boundednesspotential}
&&e^{-\varepsilon T} \bigl(C_{\alpha}(x)\bigl\|\sqrt{L}e^{-TL}u_{\lambda
^{\kappa}_0}\bigr\|+C'_{\alpha}(x)\|e^{-TL}u_{\lambda^{\kappa}_0}\|
\bigr)\nonumber \\
&&\qquad = e^{-\varepsilon T} \biggl[C_{\alpha}(x) \biggl(\int_{0}^{\infty}
\lambda e^{-2T\lambda} d\|E^{0}u_{\lambda^{\kappa}_0}\|^2(\lambda
) \biggr)^{1/2}\nonumber \\
&&\qquad \quad \hspace*{33pt}{} +C'_{\alpha} \biggl(\int_{0}^{\infty} e^{-2T\lambda}
\,d\|E^{0}u_{\lambda^{\kappa}_0}\|^2(\lambda) \biggr)^{1/2} \biggr]\\
&&\qquad \le e^{-\varepsilon T}2T^{-1/2} \|u_{\lambda^{\kappa}_0}\|\nonumber \\
&&\qquad \onrarrow{T\to\infty} 0.\nonumber
\end{eqnarray}

We have then, from \eqref{optionalsampling} and the dominated
convergence theorem, that
%
\begin{eqnarray}\label{fortyone}
0 &\leq& u_{\lkap}(x) \le\lim_{T\to\infty} u_{\lambda^{\kappa
}_0}(a) \mathbb{E}_x [e^{-\varepsilon T_{a}\wedge T} ] \nonumber\\[-8pt]\\[-8pt]
& =& u_{\lambda^{\kappa}_0}(a) \mathbb{E}_x [e^{-\varepsilon
T_{a}} ].\nonumber
\end{eqnarray}
We now appeal to a basic fact from potential theory [stated and proved
in much greater generality as Proposition D.15 of \citet{DvC00}; see
also page 285 of \citet{BG68}]: There is a~constant $C(a)$ such that
for all $x\ge a$,
%
\begin{equation} \label{E:potential}
\mathbb{E}_x [e^{-\varepsilon T_a};T_a < \infty ] = C(a)
g^{\varepsilon}(x,a),
\end{equation}
where $g^{\varepsilon}$ is the $\varepsilon$-potential, defined by
\[
g^{\varepsilon}(x,y) = \int_0^{\infty}e^{-\varepsilon t}p(t,x,y)\,dt,
\]
where $p(t,x,y)=p^0(t,x,y)$ denotes the integral kernel of the operator
$e^{-tL}$. Since $e^{-tL}$ is self-adjoint, the integral kernel
$p(t,x,y)$ is symmetric with respect to $\Rho$, so that from \eqref{fortyone}
\begin{eqnarray*}
\int_a^{\infty} u_{\lkap}(x) \rrho(x)\,dx&\le& u_{\lambda^{\kappa
}_0}(a)\int_{a}^{\infty}\mathbb{E}_x [e^{-\varepsilon T_a}
] \rrho(x) \,dx\\
&\le& C(a) u_{\lambda^{\kappa}_0}(a) \izf g^{\varepsilon}(x,a) \rrho
(x)\,dx \\
&=& C(a)u_{\lambda^{\kappa}_0}(a) \izf\izf e^{-\varepsilon t}
p(t,x,a) \rrho(x) \,dx \,dt \\
&=& C(a) u_{\lambda^{\kappa}_0}(a) \izf\izf e^{-\varepsilon t}
p(t,a,x) \rrho(x) \,dx \,dt \\
&\le& C(a) \varepsilon^{-1}u_{\lambda^{\kappa}_0}(a) .
\end{eqnarray*}
Since $u_{\lkap}(x) \rrho(x)$ is bounded on $[0,a]$, this completes
the proof.
\end{pf}

\begin{remark}
The above result reflects a general principle, which seems to be well
known to analysts and mathematical physicists: The decay of the
eigenfunctions associated with isolated eigenvalues is dictated by the
decay of Green's function, at least in regions where the potential
$\kappa$ is negligible.
\end{remark}

\subsection{\texorpdfstring{Low killing at $\infty$: The recurrent case}{Low killing at infinity: The recurrent case}} \label{sec:lowkill}
We assume for the remainder of this section that $K:=\lim_{x
\rightarrow\infty}\kappa(x)$ exists. Whereas the total surviving
mass in the case $K > \lambda^{\kappa}_0$ decays at the strictly
exponential rate $e^{-\lambda_{0}^{\kappa}t}$, in the case $\lim_{x
\rightarrow\infty}\kappa(x) < \lambda_0^{\kappa}$ one typically has
%
\begin{equation}\label{nonpositiverecurrence}
\lim_{t \rightarrow\infty}e^{\lambda_0^{\kappa}t} \mathbb
{P}_x(X_t \in A, \tau_{\partial} > t) = 0
\end{equation}
for every bounded Borel set $A \subset[0,\infty)$. (This can be seen
for a Brownian motion with constant\vadjust{\goodbreak} drift by direct computation.)
Equation \eqref{nonpositiverecurrence} remains true for every
diffusion, if the bottom of the spectrum of the diffusion generator is
not an eigenvalue in the $\mL^2$-sense. Thus we cannot rely upon
arguments that assume a spectral gap.

It may seem surprising that, despite the complicated relationship
between the unkilled motion and killing for determining the lifetime of
the process (and hence, whether it returns to its starting point), the
conventional transience/recurrence dichotomy for the \textit{unkilled}
process is exactly the criterion that distinguishes between convergence
and escape to infinity. We begin in this section by assuming that the
unkilled process is recurrent, which is equivalent to assuming that
$\int_0^{\infty}\rrho(x)^{-1}\,dx = \infty$, and show that this
implies convergence to quasistationarity. In particular the lowest
eigenfunction $\varphi(\lambda_0^{\kappa},\cdot)$ is integrable
(but now not necessarily square integrable) with respect to $\Rho$. In
Section \ref{sec:transient} we then address the case when the unkilled
process is transient.\vspace*{-3pt}

\begin{theorem}\label{qsddrift}
Let infinity be a natural boundary. Suppose that \mbox{$K< \lambda^{\kappa
}_0$}, and $\int_0^{\infty}\rrho(x)^{-1}\,dx = \infty$. Then $X_t$
started from an arbitrary compactly supported initial distribution $\nu
$ converges to the quasistationary distribution with $\Rho$-density
proportional to $\varphi(\lambda_0^{\kappa},\cdot)$. Moreover, the
asymptotic mortality rate~$\eta_{\nu}$ is independent of~$\nu$ and
equals $\lambda_0^{\kappa}$.\vspace*{-3pt}
\end{theorem}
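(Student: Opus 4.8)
The plan is to exclude escape to infinity by pinning the asymptotic mortality rate $\eta_\nu$ to $\lkap$, which by hypothesis is strictly larger than $K$. Lemma \ref{SEcomb} forces $\eta_\nu = K$ whenever the process escapes, so showing $\eta_\nu = \lkap$ rules escape out. The dichotomy in Lemma \ref{SEcomb} then leaves only convergence, Theorem \ref{generallocalMandl} identifies the limiting density as proportional to $\varphi(\lkap,\cdot)$, and the independence of $\eta_\nu$ from $\nu$ is automatic once $\eta_\nu = \lkap$.

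The first and most delicate step is to prove that the speed measure $\Rho$ is finite, since this is what unlocks Lemma \ref{exporder}. I would argue by contradiction. If $\Rho$ were infinite, then the hypothesis ``$\Gamma$ infinite'' of part \ref{it:lambda0} of Lemma \ref{spectrum} is met; since $\lkap > K = \limsup_{x\to\infty}\kappa(x)$, that part yields $\lambda_0 > 0$ for the unkilled operator. But $\lambda_0 > 0$ together with the recurrence hypothesis $\izf \rrho(x)^{-1}\,dx = \infty$ gives $\Rho(\R_+) < \infty$ by part \ref{it:speedfinite} of the same lemma, contradicting the assumption that $\Rho$ is infinite. Hence $\Rho$ is finite.

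Next, since $\kappa \ge 0$ we have $K \ge 0$, so $\lkap > K \ge 0$ gives $\lkap > 0$. With $\Rho$ finite and $\lkap > 0$, Lemma \ref{exporder} applies and shows that for every compactly supported $\nu$ one has $-\lim_{t\to\infty} t^{-1}\log \P_\nu(\tp > t) = \lkap$. On the other hand, Lemma \ref{SEcomb} (valid because $\infty$ is natural and $\lkap \ne K$) provides the asymptotic mortality rate $\eta_\nu$ with $a(\nu,r) = \lim_{t\to\infty} \P_\nu(\tp > t+r \mid \tp > t) = e^{-\eta_\nu r}$. A routine Ces\`aro or telescoping argument shows that the existence of this limiting ratio, together with the monotonicity of $t\mapsto \P_\nu(\tp > t)$, forces $-\lim_{t\to\infty} t^{-1}\log\P_\nu(\tp > t) = \eta_\nu$; comparing with Lemma \ref{exporder} gives $\eta_\nu = \lkap$.

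Finally, Lemma \ref{SEcomb} asserts that if the process escapes to infinity then $\eta_\nu = K$. Since we have shown $\eta_\nu = \lkap > K$, escape cannot occur, so $F(\nu,\R_+) = 1$ for every compactly supported $\nu$. By the dichotomy of Theorem \ref{Thm33} the conditioned process therefore converges rather than escaping, and combining this with the convergence on compacta of Theorem \ref{generallocalMandl} yields convergence to the quasistationary distribution with $\Rho$-density proportional to $\varphi(\lkap,\cdot)$; since $\lkap$ does not depend on $\nu$, neither does $\eta_\nu$. The hard part is the finiteness of $\Rho$: it is the one genuinely analytic input, and it is precisely where recurrence interacts with the spectral gap $\lkap > K$ to exclude the infinite-speed situation. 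Once $\Rho$ is known to be finite, the survival probability decays at the clean exponential rate $\lkap$ and the remainder is bookkeeping with the established dichotomy.
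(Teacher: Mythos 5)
Your proposal is correct and follows essentially the same route as the paper's proof: finiteness of $\Rho$ via parts (\ref{it:lambda0}) and (\ref{it:speedfinite}) of Lemma \ref{spectrum}, the exponential decay rate $\lkap$ from Lemma \ref{exporder}, and the contradiction with $\eta_{\nu}=K$ from Lemma \ref{SEcomb} to exclude escape. The only differences are cosmetic — you run a single contradiction argument for the finiteness of $\Rho$ where the paper splits on $\alpha$, and you make explicit the telescoping step from $a(\nu,r)=e^{-\eta_{\nu}r}$ to the logarithmic decay rate, which the paper leaves implicit.
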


\begin{pf}
If $X_t$ escapes to infinity then we know from Lemma \ref{SEcomb} that
\[
a(\nu,r) = \lim_{t\rightarrow\infty}\frac{\mathbb{P}_{\nu} (
\tau_{\partial}>t+r )}{\mathbb{P}_{\nu}(\tau_{\partial}>t)}=
e^{-K r}.
\]
Since by assumption $\lambda_0^{\kappa}>K$, when $\alpha>0$ part
\hyperlink{it:lambda0}{(vii)} of Lemma \ref{spectrum} tells us that $\lambda
_0>0$. The strict positivity of $\lambda_0$ together with the
assumption $\int_0^{\infty}\rrho(x)^{-1}\,dx = \infty$ allow us to
apply part \hyperlink{it:speedfinite}{(ii)} of Lemma \ref{spectrum}, to
conclude that the speed measure $\Rho$ is finite. When $\alpha=0$ and
$\Rho$ is infinite the same reasoning holds, leading to a
contradiction. Therefore we may assume, in any case, that $\Rho$ is finite.

Therefore Lemma \ref{exporder} shows that for every compactly
supported measure~$\nu$
\[
-\lim_{t\rightarrow\infty}\frac{1}{t}\log\mathbb{P}_{\nu}
(\tau_{\partial}> t ) = \lambda^{\kappa}_0.
\]
In the case of escape to infinity equations \eqref{eta} and \eqref
{elementaryfact} imply
\[
-\lim_{t\rightarrow\infty} \frac{1}{t}\log\mathbb{P}_{\nu}(\tau
_{\partial}>t) = \eta_{\nu} = K \ne\lambda_0^{\kappa}.
\]
Therefore the assumption $F(\nu,\mathbb{R}_+) = 0$ cannot be true,
and thus by Theorem \ref{Thm33} we conclude $F(\nu,\mathbb{R_+}) =
1$ and $F(\nu,\infty) = 0$. Thus $X_t$ converges from every compactly
supported initial distribution $\nu$ to the quasistationary
distribution $\varphi(\lambda_0^{\kappa},\cdot)$.\vspace*{-3pt}
\end{pf}

The above theorem has the following corollary, which in a slightly more
restrictive form already appears in the\vadjust{\goodbreak} work of \citet{CMSM95}. The
proof presented in \citet{CMSM95} suffers from a gap, so it seems to be
worth presenting an alternative (and more general) proof of the assertion.

\begin{Corollary}\label{CMSM}
Suppose $\kappa\equiv0$ and $\infty$ is a natural boundary point,
and the process $X_{t}$ is recurrent, with $\alpha>0$.
\begin{itemize}
\item If $\lambda_{0}>0$, then $X_t$ converges from every compactly
supported initial distribution $\nu$ to the quasistationary
distribution with $\Rho$-density proportional to $\varphi(\lambda
_0,\cdot)$.
\item If $\lambda_0 = 0$, then $X_t$ started from $\nu$ escapes to infinity.
\end{itemize}
\end{Corollary}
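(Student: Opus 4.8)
The plan is to dispose of the two bullets by entirely different routes. For the first bullet, suppose $\lambda_0>0$. Since $\kappa\equiv 0$ we have $\lkap=\lambda_0$ and $K=\lim_{x\to\infty}\kappa(x)=0<\lambda_0$, while recurrence is by definition the hypothesis $\izf\rrho(x)^{-1}\,dx=\infty$. Thus every hypothesis of Theorem \ref{qsddrift} holds, and that theorem immediately yields convergence from any compactly supported $\nu$ to the quasistationary distribution with $\Rho$-density proportional to $\varphi(\lambda_0,\cdot)$. So the first bullet needs no new argument.

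The whole content is therefore the case $\lambda_0=0$, where I must establish escape. First I would compute the relevant solution $\varphi(\lambda_0,\cdot)=\varphi(0,\cdot)$ of \eqref{eigenodequation} explicitly: with $\kappa\equiv 0$ the equation $L\varphi(0,\cdot)=0$ says $(\rrho\varphi')'=0$, so $\rrho\varphi'$ is constant, and the boundary data $\varphi(0,0)=\frac1{1+\alpha}$, $\varphi'(0,0)=\frac{\alpha}{1+\alpha}$ together with $\rrho(0)=1$ give
\begin{equation*}
\varphi(0,x)=\frac{1}{1+\alpha}\Bigl(1+\alpha\, S(x)\Bigr),\qquad S(x):=\int_0^x\rrho(y)^{-1}\,dy.
\end{equation*}
Hence $\izf\varphi(0,x)\rrho(x)\,dx\ge\frac{\alpha}{1+\alpha}\izf S(x)\rrho(x)\,dx$. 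Because $\infty$ is natural it is in particular inaccessible, which by the Feller criterion in the Definition of Section \ref{sec:BC} means precisely $\izf S(x)\rrho(x)\,dx=\infty$; since $\alpha>0$ this forces $\izf\varphi(0,x)\rrho(x)\,dx=\infty$. Thus the candidate quasistationary density is not $\Rho$-integrable.

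To convert non-integrability into genuine escape I would invoke the strong ratio limit theorem, Proposition \ref{ratiolimit}. Taking $a=x$ and $t=0$ there (and using $\lkap=0$, which is exactly what makes the limiting eigenfunction the growing solution $\varphi(0,\cdot)$), we obtain $p(T,x,y)/p(T,x,x)\to\varphi(0,y)/\varphi(0,x)$ as $T\to\infty$, locally uniformly in $y$. Writing $\P_x(X_T\le z,\tp>T)=\int_0^z p(T,x,y)\rrho(y)\,dy$ and $\P_x(\tp>T)=\izf p(T,x,y)\rrho(y)\,dy$, I divide both by $p(T,x,x)$. The numerator converges to the finite value $\varphi(0,x)^{-1}\int_0^z\varphi(0,y)\rrho(y)\,dy$, while by Fatou's lemma the denominator is bounded below by $\varphi(0,x)^{-1}\int_0^R\varphi(0,y)\rrho(y)\,dy$ for every $R$, hence tends to $+\infty$ thanks to the non-integrability just proved. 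Therefore $\P_x(X_T\le z\mid\tp>T)\to 0$, which is escape. Passing from $\delta_x$ to a general compactly supported $\nu$ is routine: by \eqref{E:harnack} and Corollary \ref{C:ratiolimitintegrated} the dependence on the starting point is controlled uniformly over the compact set $\supp\nu$, so the same computation carries through with the $\nu$-integral inserted.

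The main obstacle is the limit interchange in the numerator: Proposition \ref{ratiolimit} gives only \emph{locally} uniform convergence on $(0,\infty)^2$, so the convergence of $\int_0^z p(T,x,\cdot)/p(T,x,x)\,\rrho$ must still be justified up to the killing boundary at $0$. I would handle this by approximating $[0,z]$ from within by $[\varepsilon,z]$, where the bound $p(T,x,y)\le C\,p(T,x,x)$ coming from the Harnack inequality \eqref{E:harnack} supplies the domination needed for the dominated convergence theorem, and then controlling the residual integral over $(0,\varepsilon)$ by the same Harnack estimate applied up to the regular boundary $0$; since the denominator already diverges, even a crude bound on this residual is harmless. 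This boundary bookkeeping, rather than any conceptual point, is the only delicate step.
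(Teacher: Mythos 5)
Your proposal is correct, and for the substantive part it coincides with the paper's proof: the first bullet is dispatched by Theorem \ref{qsddrift} exactly as in the paper, and for the second bullet you compute the same explicit solution $\varphi(0,x)=\frac{1}{1+\alpha}\bigl(1+\alpha\int_0^x\rrho(y)^{-1}dy\bigr)$ and deduce $\izf\varphi(0,x)\rrho(x)dx=\infty$ from the inaccessibility of $\infty$, again exactly as in the paper. Where you diverge is in the final implication ``non-integrability of $\varphi(\lambda_0,\cdot)$ implies escape'': the paper simply stops after the non-integrability computation, leaning on the fact recalled in the introduction (Proposition 2.3 of \cite{quasistat}) that a non-integrable eigenfunction forces escape, whereas you prove this implication from scratch via the strong ratio limit theorem, Proposition \ref{ratiolimit}, dividing numerator and denominator by $p(T,x,x)$ and using Fatou to blow up the denominator. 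That argument is sound, and it has the merit of making the corollary self-contained; its only cost is the boundary bookkeeping you flag at the end, since \eqref{E:harnack} as stated only covers compact subsets of the open half-line and a Harnack-type bound up to the regular boundary $0$ would need to be justified separately. You could sidestep that entirely by invoking Theorem \ref{generallocalMandl}: convergence on compacta gives $\P_\nu(X_t\le z\mid X_t\le z')\to\int_0^z\varphi(0,y)\rrho(y)dy/\int_0^{z'}\varphi(0,y)\rrho(y)dy$, and since $\P_\nu(X_t\le z\mid\tp>t)\le\P_\nu(X_t\le z\mid X_t\le z')$, letting $z'\to\infty$ and using non-integrability yields escape with no boundary estimates and with the general compactly supported $\nu$ handled automatically.
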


\begin{pf}
The first part of the assertion follows directly from Theorem~\ref
{qsddrift}. In order to prove the second assertion, observe that the function
\[
R(y):=
\frac{1}{1+\alpha}+\frac{2\alpha}{1+\alpha}\int_{0}^{y}\rrho(x)^{-1}\,dx
\]
satisfies $LR=0$; since $R(0)=1/(1+\alpha)$ and $\frac
{1}{2}R'(0)=\alpha/(1+\alpha)$ the function~$R$ coincides with the
unique eigenfunction $\varphi(0,\cdot)$. We have
\begin{eqnarray*}
\int_0^{\infty}\varphi(0,y) \rrho(y)\,dy &=& \frac{1}{1+\alpha}\int
_0^{\infty}\rrho(y)\,dy+\frac{2\alpha}{1+\alpha}\int_0^{\infty
}\rrho(y)\int_0^y\rrho^{-1}(x)\,dx\, dy \\
&=& \infty,
\end{eqnarray*}
by the assumption that $\infty$ is a natural boundary.
\end{pf}

\subsection{Low killing at infinity: The transient case} \label{sec:transient}
%
\begin{theorem}\label{Escape}
Suppose that $\infty$ is a natural boundary point and that $\int
_0^{\infty}\rrho(x)^{-1}\,dx < \infty$. If $K<\lkap$, then $X_t$
escapes to infinity from every initial distribution. The rate of escape
is exponential with rate $\lkap-K$, in the sense that for all $z,x>0$,
%
\begin{equation} \label{E:escaperate}
\limsup_{t\to\infty} \frac{1}{t}\log\mathbb{P}_x  ( X_t \leq
z |\tau_{\partial} > t ) = -(\lkap-K).
\end{equation}
\end{theorem}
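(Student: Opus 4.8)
The plan is to compute separately the exponential decay rates of the numerator $\P_x(X_t \le z, \tp > t)$ and the denominator $\P_x(\tp > t)$ of the conditional probability, to show that the former decays at rate $\lkap$ and the latter at rate $K$, and then to take the quotient. Since $\lkap > K$, the ratio decays at the positive rate $\lkap - K$, which yields simultaneously the escape to infinity and the claimed rate \eqref{E:escaperate}.

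The place where the transience hypothesis $\izf \rrho(x)^{-1}dx < \infty$ enters is a lower bound showing that survival decays \emph{no faster} than rate $K$. Fix $\epsilon > 0$ and choose $N$ so large that $\kappa \le K + \epsilon$ on $[N,\infty)$, which is possible since $\kappa(x) \to K$. Starting the process at $N+1$ and restricting to the event $\{T_N = \infty\}$ that it never returns to $N$ (and so never descends to $0$), the internal killing rate stays below $K + \epsilon$ and no boundary killing occurs, so by the Feynman--Kac representation \eqref{FKrepresent}, $\P_{N+1}(\tp > t) \ge e^{-(K+\epsilon)t}\,\P_{N+1}(T_N = \infty)$. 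Because the scale function $\int^{\cdot}\rrho^{-1}$ is bounded and $\infty$ is inaccessible, $\P_{N+1}(T_N = \infty) > 0$, giving $\liminf_{t} t^{-1}\log \P_{N+1}(\tp > t) \ge -K$. By Proposition \ref{P:Harnackconstantlimit} applied with $z = \infty$, this $\liminf$ is independent of the starting point, so the asymptotic mortality rate $\eta_{\nu}$ of Lemma \ref{SEcomb} satisfies $\eta_{\nu} \le K$ for every compactly supported $\nu$.

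Next I would invoke the dichotomy. Lemma \ref{SEcomb} applies, since $\infty$ is natural and $\lkap \ne K$, so either the process converges to the quasistationary distribution for all compactly supported $\nu$, or it escapes for all such $\nu$. In the convergence case the formula \eqref{formelfuera} with $F(\nu,\R_+) = 1$, together with the eigenrelation $e^{-rL^{\kappa}}\varphi(\lkap,\cdot) = e^{-\lkap r}\varphi(\lkap,\cdot)$ and self-adjointness, forces $a(\nu,r) = e^{-\lkap r}$, i.e. $\eta_{\nu} = \lkap > K$, contradicting the bound $\eta_{\nu} \le K$ just established. Hence the process escapes from every compactly supported initial distribution, and by the final assertion of Lemma \ref{SEcomb} the mortality rate is exactly $\eta_{\nu} = K$; that is, $\lim_{t} t^{-1}\log \P_x(\tp > t) = -K$.

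It remains to identify the numerator rate as $\lkap$. Writing $\P_x(X_t \le z, \tp > t) = \int_0^z p^{\kappa}(t,x,y)\,\rrho(y)dy$, I would first note, as in the proof of Lemma \ref{exporder}, that $t^{-1}\log p^{\kappa}(t,a,a) \to -\lkap$: the Harnack inequality \eqref{E:harnack} gives $p^{\kappa}(t,a,a) \asymp \|e^{-\frac{t}{2}L^{\kappa}}\indic_{\mc I}\|^2$ for a small interval $\mc I \ni a$, and Lemmas \ref{L:lowspec} and \ref{L:supportbase} identify the decay rate of the latter as $\lkap$, since the nonnegative $\indic_{\mc I}$ has $\lambda_{\indic_{\mc I}} = \lkap$. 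The strong ratio limit theorem (Proposition \ref{ratiolimit}) then gives $p^{\kappa}(t,x,y)/p^{\kappa}(t,a,a) \to \varphi(\lkap,x)\varphi(\lkap,y)/\varphi(\lkap,a)^2$ locally uniformly, and the Harnack comparison makes this convergence dominated on $[0,z]$, so $t^{-1}\log\int_0^z p^{\kappa}(t,x,y)\,\rrho(y)dy \to -\lkap$. Dividing numerator by denominator then yields $\lim_{t} t^{-1}\log \P_x(X_t \le z \mid \tp > t) = -(\lkap - K)$, a genuine limit, which establishes \eqref{E:escaperate}. The main obstacle is the transience lower bound of the second paragraph: it is the only step that genuinely uses $\izf \rrho(x)^{-1}dx < \infty$, and it is precisely what separates this theorem from the recurrent case of Theorem \ref{qsddrift}, where the same integral diverges and the conclusion reverses.
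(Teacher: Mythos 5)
Your overall skeleton --- numerator decays at rate $\lkap$, denominator at rate $K$, divide --- is exactly the paper's, and your transience lower bound on survival (restricting to $\{T_N=\infty\}$, which has positive probability because the scale function is bounded) is precisely the paper's inequality \eqref{denominator}--\eqref{den2}. Where you genuinely diverge is in how escape is established. The paper does it directly: it bounds the numerator pointwise by $\left(C_{\alpha}(x)\lkap+C'_{\alpha}\right)\|\indic_{[0,z]}\|e^{-t\lkap}$ via \eqref{E:sqrtcompare2} and Lemma \ref{L:supportbase}, divides by the transience lower bound, and reads off $\P_x(X_t\le z\mid\tp>t)\le C e^{-(\lkap-K-\varepsilon)t}\to 0$; the dichotomy machinery of Lemma \ref{SEcomb} is invoked only afterwards, to pin the denominator rate at exactly $K$. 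You instead route escape through the dichotomy from the start, deriving a contradiction between $\eta_\nu\le K$ and the mortality rate of the would-be quasilimit. Your route buys a pleasing symmetry with Theorem \ref{qsddrift} (the same contradiction run in the opposite direction), but at the cost of an extra ingredient the direct proof does not need.

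That extra ingredient is the one step you should repair. You justify $a(\nu,r)=e^{-\lkap r}$ in the convergence branch by ``the eigenrelation $e^{-rL^{\kappa}}\varphi(\lkap,\cdot)=e^{-\lkap r}\varphi(\lkap,\cdot)$ and self-adjointness,'' but in the regime $K<\lkap$ the function $\varphi(\lkap,\cdot)$ is typically \emph{not} in $\mL^2$ --- the paper stresses around \eqref{nonpositiverecurrence} that $\lkap$ need not be an $\mL^2$-eigenvalue here --- so the operator identity is not available. What is available is the supermartingale inequality $\E_y\bigl[\varphi(\lkap,X_r);\tp>r\bigr]\le e^{-\lkap r}\varphi(\lkap,y)$ (the process $e^{\lkap t}e^{-\int_0^t\kappa(X_s)ds}\varphi(\lkap,X_t)\indic_{\{T_0>t\}}$ is a nonnegative local martingale, by It\^o's formula and the boundary condition at $0$, hence a supermartingale); by Fubini this gives $a(\nu,r)\le e^{-\lkap r}$, i.e.\ $\eta_\nu\ge\lkap>K$, which is all your contradiction needs. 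Alternatively, note that your own two estimates already close the argument without any of this: the numerator rate $\lkap$ from your final paragraph together with the denominator lower bound $e^{-(K+\epsilon)t}$ forces $\P_x(X_t\le z\mid\tp>t)\to 0$ directly, which is the paper's proof. With either repair your argument is complete and correct.
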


\begin{pf}
Observe that the condition $\int_0^{\infty}\rrho(x)^{-1}\,dx <
\infty$ implies that for each $a \in(0,\infty)$ and each $x \in(a,
\infty)$ the unkilled diffusion (corresponding to the generator $L$)
started from $x$ has nonzero probability of never hitting $a$. For
$\varepsilon> 0$ we can choose $a = a_{\varepsilon} \in(0, \infty)$
such that $\kappa(x) \in(K-\varepsilon,K+\varepsilon)$ for every $x
\in[a, \infty)$. Then we have for every $x \in(a,\infty)$
%
\begin{eqnarray}\label{denominator}
\mathbb{P}_x  ( \tau_{\partial}>t ) &=& \mathbb{E}_x
\bigl[e^{-\int_0^{t}\kappa(X_s)\,ds}, T_0>t \bigr] \nonumber \\
&\geq& e^{-(K+\varepsilon)t} \mathbb{P}_x (T_a > t ) \\
&\geq& e^{-(K+\varepsilon)t}\mathbb{P}_x (T_a = \infty ).\nonumber
\end{eqnarray}
Since $\mathbb{P}_x (T_a = \infty )$ is an increasing
function of $x$, we can apply the Markov property to see that there is
a nonzero increasing function $C(x)$ such that for all $x>0$
%
\begin{equation} \label{den2}
\mathbb{P}_x  ( \tau_{\partial}>t ) \ge\mathbb{P}_x
 ( X_1\ge a+1 ) \cdot\inf_{x'\ge a+1} \mathbb{P}_{x'}
 ( \tau_{\partial}>t-1 )\ge
C(x) e^{-(K+\varepsilon)t}.\hspace*{-35pt}
\end{equation}
Note that $\inf_{x'\ge a+1} \mathbb{P}_{x'}  ( \tau_{\partial
}>t-1 ) > 0$ because there is no explosion.
On the other hand, for any fixed $z\ge0$ we can apply the bound \eqref
{E:sqrtcompare2} and Lemma~\ref{L:supportbase} to see that
%
\begin{eqnarray} \label{E:num}
\mathbb{P}_x ( X_t\leq z, \tau_{\partial} > t ) &=&
\bigl(e^{-tL^{\kappa}}\mathbf{1}_{[0,z]}\bigr) (x) \nonumber\\[-8pt]\\[-8pt]
&\leq& \bigl(C_{\alpha}(x)\lkap+C'_{\alpha} \bigr)\bigl\|\indic
_{[0,z]}\bigr\| e^{-t\lkap}\nonumber
\end{eqnarray}
for all $t>1/2\lkap$. Combining \eqref{den2} and \eqref{E:num}, we
see that there is a constant~$C'$ such that
%
\begin{equation} \label{E:numden}
\mathbb{P}_x ( X_t\leq z   | \tau_{\partial} > t )
\le
\bigl\|\indic_{[0,z]}\bigr\| \frac{C'}{C(x)} e^{-(\lkap-K-\varepsilon)t}.
\end{equation}

We conclude that for all $x\ge a_{\varepsilon}$,
\[
\limsup_{t\to\infty} \frac{1}{t}\log\mathbb{P}_x  ( X_t \leq
z \cond\tau_{\partial} > t ) \le-(\lkap-K)+\varepsilon.
\]
By Proposition \ref{P:Harnackconstantlimit}, since $\varepsilon$ is
arbitrary, we conclude that the limsup is no more than $-(\lkap-K)$.

In particular, we have shown that the process escapes to infinity. By
Lemma~\ref{SEcomb}, it follows that $\lim_{t\to\infty} \mathbb
{P}_{x}  \{\tp>t+1   |   \tp>t \}=e^{-K},$ from
which we conclude using \eqref{elementaryfact} that
\[
\lim_{t\to\infty} t^{-1}\log\mathbb{P}_{x}  \{\tp>t  \}=-K.
\]
Lemmas \ref{L:lowspec} and \ref{L:supportbase} tell us that
\[
\lim_{t\to\infty} t^{-1}\log\mathbb{P}_{x}  \{X_t \leq z
\}\ge-\lkap,
\]
from which we conclude that
\[
\liminf_{t\to\infty} \frac{1}{t}\log\mathbb{P}_x  ( X_t \leq
z \cond\tau_{\partial} > t ) \ge-(\lkap-K),
\]
completing the proof of \eqref{E:escaperate}.
\end{pf}

If $\kappa$ is eventually constant---that is, for some $a$ we have
$\kappa(x)=K$ for all \mbox{$x\ge a$}---then we can strengthen the conclusion
of Theorem \ref{Escape} slightly.

\begin{Corollary}\label{compareMM}
Suppose that $\kappa$ is eventually constant and that \mbox{$\lkap> 0$}.
Then for every $x , z\in(0,\infty)$
\[
\sup_{t} e^{(\lkap-K) t} \mathbb{P}_x (X_t \leq z |\tau
_{\partial} > t ) < \infty.
\]
\end{Corollary}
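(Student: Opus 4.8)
The plan is to revisit the two-sided estimate underlying Theorem \ref{Escape} and to observe that, once $\kappa$ equals $K$ exactly near infinity, the sole source of $\varepsilon$-loss in that argument disappears, so that the ratio bound holds with the sharp exponent $e^{-(\lkap-K)t}$ and a constant uniform in $t$. We work throughout under the standing hypotheses of Theorem \ref{Escape}: $\infty$ is natural, $\int_0^{\infty}\rrho(x)^{-1}\,dx<\infty$, and $K<\lkap$ (so that $\lkap-K>0$), together with the new assumptions that $\lkap>0$ and that $\kappa(x)=K$ for all $x\ge a$, for some fixed $a$.

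For the numerator nothing needs to change: the bound \eqref{E:num}, which rests on \eqref{E:sqrtcompare2} applied to $g=\indic_{[0,z]}$ together with the identification $\lambda_{g}=\lkap$ from Lemma \ref{L:supportbase}, already carries the sharp exponent. Explicitly, for every $t>1/2\lkap$,
$$
\P_x\bigl(X_t\le z,\ \tp>t\bigr)=\bigl(e^{-tL^{\kappa}}\indic_{[0,z]}\bigr)(x)\le \bigl(C_{\alpha}(x)\lkap+C'_{\alpha}\bigr)\,\|\indic_{[0,z]}\|\,e^{-\lkap t}.
$$

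The key new observation concerns the denominator. Because $\kappa\equiv K$ on $[a,\infty)$, for $x>a$ we may restrict the Feynman-Kac expectation in \eqref{denominator} to the event $\{T_a>t\}$ that the path never descends to $a$; on that event $\int_0^t\kappa(X_s)\,ds=Kt$ exactly, with no approximation, so that
$$
\P_x\bigl(\tp>t\bigr)\ \ge\ e^{-Kt}\,\P_x\bigl(T_a>t\bigr)\ \ge\ e^{-Kt}\,\P_x\bigl(T_a=\infty\bigr).
$$
Transience guarantees that $\P_x(T_a=\infty)$ is strictly positive and increasing in $x$, exactly as in \eqref{denominator}, and the passage to $0<x\le a$ via the Markov property is identical to \eqref{den2}. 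Thus there is a strictly positive constant $C(x)$, independent of $t$, with $\P_x(\tp>t)\ge C(x)e^{-Kt}$ for all $t\ge1$. This is precisely the $\varepsilon=0$ version of the lower bound used in Theorem \ref{Escape}.

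Combining the two estimates, for every $t>\max\{1,1/2\lkap\}$ we obtain
$$
e^{(\lkap-K)t}\,\P_x\bigl(X_t\le z\mid\tp>t\bigr)\ \le\ \frac{\bigl(C_{\alpha}(x)\lkap+C'_{\alpha}\bigr)\,\|\indic_{[0,z]}\|}{C(x)},
$$
a finite constant independent of $t$. On the complementary bounded range of $t$ the left-hand side is trivially bounded, since the conditional probability is at most $1$ and $e^{(\lkap-K)t}$ is continuous on a compact interval. Taking the supremum over all $t$ then yields the claim. I expect no serious obstacle: the only point requiring care is that the denominator bound genuinely holds with a positive $t$-independent constant, which is where the transience and the Markov-property argument of Theorem \ref{Escape} are reused verbatim --- the single substantive change from that proof being that eventual constancy of $\kappa$ lets us take $\varepsilon=0$ throughout.
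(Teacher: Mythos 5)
Your proposal is correct and follows essentially the same route as the paper: the paper's proof is precisely the observation that when $\kappa$ is eventually constant the lower bounds \eqref{denominator} and \eqref{den2} on $\mathbb{P}_x(\tau_{\partial}>t)$ hold with $\varepsilon=0$, so the ratio bound \eqref{E:numden} holds with the sharp exponent $e^{-(\lkap-K)t}$ and a $t$-independent constant. Your write-up simply spells out the same two ingredients (the spectral upper bound \eqref{E:num} for the numerator and the $\varepsilon=0$ Feynman--Kac lower bound for the denominator) in more detail.
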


\begin{pf}
If $\kappa$ is eventually constant, then \eqref{denominator} and
\eqref{den2} hold with $\varepsilon=0$, hence \eqref{E:numden} as
well.\vadjust{\goodbreak}
\end{pf}

\begin{remark}
The case $\kappa\equiv0$ corresponds to the setting considered in
\citet{MSM01}. Theorem 4 of \citet{MSM01} includes a slightly
weaker version of the result in Corollary \ref{compareMM}, obtained by
different methods. The above theorem shows that when $\kappa\equiv0$
the principal eigenvalue $\lambda_0^{\kappa}$ gives the exponential
convergence rate at which~$X_t$ escapes to infinity.
\end{remark}

As already mentioned in Remark \ref{qsl}, a quasilimiting distribution
$\tilde{\nu}$, which in our case is a probability measure on
$(0,\infty)$ is always quasistationary in the sense that for every
Borel set $A \subset(0,\infty)$,
\[
\mathbb{P}_{\tilde{\nu}} (X_t \in A |\tau_{\partial}>
t ) = \tilde{\nu}(A);
\]
but the converse need not hold true. In the cases where we know there
is no quasilimiting distribution, though, because the process escapes
to $\infty$, we can show that there is also no quasistationary distribution.

\begin{Corollary}
Let $\infty$ be a natural boundary, with $\lambda_0^{\kappa}>K$
and\break
$\int_0^{\infty}\rrho(x)^{-1}\,dx < \infty$.
Then there is no quasistationary distribution.
\end{Corollary}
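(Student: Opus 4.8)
The plan is to argue by contradiction. Suppose $\tilde{\nu}$ is a quasistationary distribution in the sense of Remark~\ref{qsl}: a probability measure on $(0,\infty)$ with $\P_{\tilde{\nu}}(X_t\in A\mid\tp>t)=\tilde{\nu}(A)$ for every Borel $A\subset(0,\infty)$ and every $t>0$ (in particular $\P_{\tilde{\nu}}(\tp>t)>0$ for all $t$, so the conditioning is meaningful). Taking $A=[0,z]$ and writing the conditional probability as a ratio, quasistationarity says
\[
\tilde{\nu}([0,z]) = \frac{\P_{\tilde{\nu}}(X_t\le z,\tp>t)}{\P_{\tilde{\nu}}(\tp>t)}\qquad\text{for every }t>0.
\]
The goal is to show that, for each fixed $z$, the right-hand side tends to $0$ as $t\to\infty$; this forces $\tilde{\nu}([0,z])=0$ for all $z>0$, hence $\tilde{\nu}\equiv 0$, contradicting that $\tilde{\nu}$ is a probability measure.

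The essential observation, going slightly beyond the proof of Theorem~\ref{Escape}, is that the numerator estimate there becomes \emph{uniform} in the starting point once the scale is finite. By \eqref{E:num} (which combines \eqref{E:sqrtcompare2} with Lemma~\ref{L:supportbase}, using that $\lambda_{\indic_{[0,z]}}=\lkap$ since $\indic_{[0,z]}\ge 0$), we have for every $x>0$ and $t>1/2\lkap$
\[
\P_x(X_t\le z,\tp>t)\le\bigl(C_\alpha(x)\lkap+C'_\alpha\bigr)\,\|\indic_{[0,z]}\|\,e^{-\lkap t}.
\]
Because $\izf\rrho(y)^{-1}\,dy<\infty$, the function $C_\alpha(x)$ of \eqref{E:Calpha} is increasing in $x$ and bounded by its limiting value as $x\to\infty$, while $C'_\alpha$ is a constant; hence there is a finite $C_0=C_0(z)$, independent of $x$, with $\P_x(X_t\le z,\tp>t)\le C_0e^{-\lkap t}$. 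Integrating against the probability measure $\tilde{\nu}$ gives $\P_{\tilde{\nu}}(X_t\le z,\tp>t)\le C_0e^{-\lkap t}$.

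For the denominator I would reuse the pointwise lower bound \eqref{den2}: for any $\varepsilon>0$ there is a nonzero increasing function $C(\cdot)$ (depending on $\varepsilon$) with $\P_x(\tp>t)\ge C(x)e^{-(K+\varepsilon)t}$ for \emph{all} $x>0$. Integrating against $\tilde{\nu}$ and setting $c':=\izf C(x)\,\tilde{\nu}(dx)$, which is strictly positive because $C>0$ and $\tilde{\nu}$ is a probability measure, we get $\P_{\tilde{\nu}}(\tp>t)\ge c'e^{-(K+\varepsilon)t}$. Combining the two bounds,
\[
\tilde{\nu}([0,z])\le\frac{C_0}{c'}\,e^{-(\lkap-K-\varepsilon)t},
\]
and choosing $\varepsilon<\lkap-K$ (possible since $K<\lkap$) and letting $t\to\infty$ yields $\tilde{\nu}([0,z])=0$. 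As $z>0$ is arbitrary, this is the desired contradiction.

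The one step requiring care — the main obstacle — is passing from the pointwise-in-$x$ estimates underlying Theorem~\ref{Escape} to statements about the \emph{possibly non-compactly-supported} measure $\tilde{\nu}$. This is precisely where finiteness of the scale is indispensable: it makes the numerator bound uniform in $x$, so integration against $\tilde{\nu}$ is legitimate with no tail control on $\tilde{\nu}$, and the denominator bound \eqref{den2} already holds for every $x$. Equivalently, one could simply invoke the conclusion of Theorem~\ref{Escape} that $X_t$ escapes to infinity from every initial distribution, apply it to $\nu=\tilde{\nu}$ to obtain $\P_{\tilde{\nu}}(X_t\le z\mid\tp>t)\to 0$, and contradict the constancy $\P_{\tilde{\nu}}(X_t\le z\mid\tp>t)=\tilde{\nu}([0,z])$; the explicit estimates above are exactly what justify applying that escape statement to a general $\tilde{\nu}$.
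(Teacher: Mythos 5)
Your proof is correct, and it follows the same basic skeleton as the paper's (contradiction via an exponential-rate mismatch between $\P_{\tilde\nu}(X_t\le z,\tp>t)$ and $\P_{\tilde\nu}(\tp>t)$, using \eqref{den2} for the denominator and \eqref{E:num} for the numerator), but your handling of the numerator is genuinely simpler. The paper first invokes Lemma A of the appendix to get a continuous density $g$ of $\tilde\nu$ with respect to $\Rho$, writes the numerator as $\langle g, e^{-tL^{\kappa}}\indic_{(0,z]}\rangle$, and splits it into a compactly supported piece (handled by Cauchy--Schwarz and the $\mL^2$ bound $\|e^{-tL^{\kappa}}\indic_{(0,z]}\|\le e^{-\lkap t}\|\indic_{(0,z]}\|$) plus a tail piece handled by $\sup_{x\ge x_0}\P_x(X_t\le z,\tp>t)$. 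You observe that the pointwise bound \eqref{E:num} is already uniform over \emph{all} $x>0$ once $\izf\rrho(y)^{-1}dy<\infty$ (since $C_\alpha(x)$ is increasing and bounded and $C'_\alpha$ is a constant), so you can integrate it directly against $\tilde\nu$ with no information about $\tilde\nu$ beyond its being a probability measure; this makes Lemma A and the splitting unnecessary. You also dispense with the paper's use of the exact exponential form $\P_{\tilde\nu}(\tp>t)=e^{-\lambda t}$, instead just integrating \eqref{den2} to get the lower bound $c'e^{-(K+\varepsilon)t}$ with $c'=\izf C\,d\tilde\nu>0$ --- which is in fact the same positivity the paper implicitly uses in \eqref{E:killatmost}. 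The trade-off is that the paper's route yields the regularity of quasistationary distributions as a by-product, which is of independent interest, while yours is shorter and self-contained given Theorem \ref{Escape}'s estimates.
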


\begin{pf}
Assume that $\tilde{\nu}$ is a general quasistationary distribution.
The measure $\tilde{\nu}$ is absolutely continuous with respect to
$\Rho$ with a positive continuous density $g\dvtx[0,\infty)\rightarrow
(0,\infty)$ (for a sketch of the proof of this fact we refer to the
\hyperref[app]{Appendix}). There is a $\lambda$ such that $\mathbb{P}_{\tilde{\nu
}}(\tau_{\partial}>t) = e^{-\lambda t}$. By \eqref{den2}, for any
positive $\varepsilon$,
%
\begin{equation} \label{E:killatmost}
e^{-\lambda t}=\P_{\tilde\nu}\{\tp>t\}\ge e^{-(K+\varepsilon
)t}\int C(x)\,d\tilde\nu(x),
\end{equation}
which means that $\lambda\le K$. For any fixed $x_0>0$,
%
\begin{eqnarray} \label{E:killatleast}
\qquad \P_{\tilde\nu}\{X_t\le z,  \tp>t \}&=&  \bigl\langle
g,e^{-tL^{\kappa}}\mathbf{1}_{(0,z]} \bigr\rangle\nonumber\\
&=& \bigl\langle g\indic_{[0,x_0]},e^{-tL^{\kappa}}\mathbf
{1}_{(0,z]} \bigr\rangle+ \bigl\langle r \indic_{(x_0,\infty
)},e^{-tL^{\kappa}}\mathbf{1}_{(0,z]} \bigr\rangle\\
&\le& \bigl\|g\indic_{[0,x_0]} \bigr\|\cdot\bigl \|e^{-tL^{\kappa
}}\mathbf{1}_{(0,z]}  \bigr\|+\sup_{x\ge x_{0}} \P_{x} \{X_{t}\le
z,  \tp>t\}.\nonumber
\end{eqnarray}
Since $\|g\indic_{[0,x_{0}]}\|$ and $\|\indic_{(0,z]}\|$ are both
finite, we can use \eqref{spectraltheorem3} and \eqref{E:num} to see
that there is a constant $B$ such that
%
\begin{equation} \label{E:killatleast2}
\P_{\tilde{\nu}}\{X_t\le z,  \tp>t \}\le Be^{-t\lkap}.
\end{equation}
Combining \eqref{E:killatmost} and \eqref{E:killatleast2}, we see
that for all positive $t$,
%
\begin{equation} \label{E:stable}
\tilde{\nu} ([0,z] )=\P_{\tilde{\nu}}\{X_t\le z  |
  \tp>t \}\le Be^{-(\lkap-K)t},
\end{equation}
so $\tilde{\nu}$ must be identically 0 on $[0,\infty)$.
\end{pf}

\subsection{Processes that may not hit 0} \label{sec:htrans}
Consider a process which is killed only at 0 (i.e., with $\kappa\equiv
0$). If the process is not almost surely absorbed at 0
eventually---that is, if $\mathbb{P}_x(T_0 = \infty) > 0$---we may
wish to condition the process at time $t$ on being killed eventually,
but not yet. That is, we consider the long-time asymptotics of
\[
\mathbb{P}_x\bigl(X_t \in\cdot\,| T_0 \in(t,\infty)\bigr).\vspace*{-2pt}
\]
Conditions of this kind can often be found in the analogous problems in
the theory of branching processes. This problem can be reduced to our
previous analysis by an h-transform. The function $h(x) = \mathbb
{P}_x(T_0 < \infty)$ is harmonic, and by general theory [see \citet
{rP95}, Chapter 4, Sections 3 and 10] the process $(X_t)$ conditioned
to hit $0$ corresponds to the generator $L^h$ whose action is given by
\[
L^{h}f=\biggl(\frac{1}{h}L(h f)\biggr)(x) = -\frac{1}{2}f''(x)+
\biggl(-b(x)-\frac{h'(x)}{h(x)} \biggr)f'(x).\vspace*{-2pt}
\]
The process associated to the operator $L^h$ can again be defined by
Dirichlet form techniques, and the associated family of measures on the
path space is denoted by $\tilde{\mathbb{P}}_x$. As explained above
we have
\[
\mathbb{P}_x(\cdot\,| T_0 < \infty) = \tilde{\mathbb{P}}_x(\cdot).\vspace*{-2pt}
\]
The operator $L^h$ can be realized as a self-adjoint operator on the
Hilbert space $\mL^2((0,\infty), h(x)^{2}\rrho(x)\,dx)$. The
transformation $V\dvtx\mL^2((0,\infty),\break h(x)^{2}\rrho(x)\,dx)\to\mL
^2((0,\infty),\rrho(x)\,dx)$ defined by $Vf=fh$ is unitary, and defines
a unitary equivalence between $L$ and $L^{h}$, so the spectrum is
invariant under $h$-transforms. In particular, positivity of the bottom
of the spectrum of $L$ implies the positivity of the spectrum of $L^h$.
Since absorption is certain with respect to the measure $\tilde
{\mathbb{P}}_x$ we can apply our previous results in order to conclude
that for every Borel set $A \subset(0,\infty)$
\[
\lim_{t \rightarrow\infty}\mathbb{P}_x\bigl(X_t \in A | T_0 \in
(t,\infty)\bigr) = \frac{\int_A\tilde{\varphi}^h(\lambda_0,x)h(x)\rrho
(x)\,dx} {\int_0^{\infty}\tilde{\varphi}^h(\lambda_0,x)h(x)\rrho(x)\,dx},\vspace*{-2pt}
\]
where $\tilde{\varphi}^h(\lambda_0,x)$ is the unique solution of
$(L^h-\lambda_0)u = 0$, which satisfies $\tilde{\varphi}^h(\lambda
_0,0)=0$ and $(\tilde{\varphi}^h)'(\lambda_0,0)=1$.\vspace*{-3pt}

\subsection{\texorpdfstring{The case of an entrance boundary at $\infty$}{The case of an entrance boundary at infinity}} \label{sec:entrancebound}
$\!\!\!$Observe that $\int_0^{\infty} \rrho(x)^{-1}\,dx = \infty$ if $\infty$
is an entrance boundary. This follows from the fact that in this
situation the total speed measure $\int_0^{\infty}\rrho(x) \,dx$ must
be finite. Thus, the situation is essentially the same as in Theorem
\ref{qsddrift}. Indeed, we always have convergence to
quasistationarity if $\infty$ is an entrance boundary.\vspace*{-3pt}

\begin{theorem}\label{nichtnatuerlichqsd}
Assume that $0$ is regular and that $\infty$ is an entrance boundary.
Then the bottom of the spectrum is an isolated eigenvalue with
associated nonnegative eigenfunction $u_{\lambda_0^{\kappa}}$. From
every compactly supported initial distribution $\nu$, the process
$X_t$ converges to the distribution with density $u_{\lambda_0^{\kappa
}}/\izf u_{\lkap}(x)\rrho(x)\,dx$ with respect to $\Rho$.\vadjust{\goodbreak}
\end{theorem}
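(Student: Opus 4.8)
The plan is to combine the discreteness of the spectrum furnished by Theorem \ref{nichtnatuerlich} with the finiteness of the speed measure, so as to place ourselves squarely within the hypotheses of Lemma \ref{L:L1L2}; the entrance assumption makes both the isolation of $\lkap$ and the $\mL^{1}$-integrability of its eigenfunction essentially free, so the theorem becomes an easy consequence of machinery already developed.

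First I would record the spectral structure. Since $0$ is regular and $\infty$ is an entrance boundary, Theorem \ref{nichtnatuerlich} tells us that $\Sigma(L^{\kappa})$ is purely discrete. Being a positive self-adjoint operator, $L^{\kappa}$ has its spectrum bounded below and closed, so $\lkap=\inf\Sigma(L^{\kappa})$ is attained and is an isolated eigenvalue of finite multiplicity. By Weyl's spectral theorem (Theorem \ref{weylspectraltheorem}) the spectrum is simple, so the eigenspace at $\lkap$ is one-dimensional; the $\mL^{2}$ eigenfunction satisfies the eigenvalue ODE and the boundary condition at $0$, hence is a multiple of the positive solution $\varphi(\lkap,\cdot)$ from \eqref{possolutions}. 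This is the claimed non-negative eigenfunction $u_{\lkap}\in\mL^{2}$.

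Next I would establish integrability. As observed at the opening of this subsection, an entrance boundary at $\infty$ forces the total speed measure $\Rho\bigl((0,\infty)\bigr)=\izf\rrho(x)\,dx$ to be finite. Since $\Rho$ is then a finite measure, Cauchy--Schwarz gives
\begin{equation*}
\izf u_{\lkap}(x)\,\rrho(x)\,dx \le \Bigl(\izf u_{\lkap}(x)^{2}\,\rrho(x)\,dx\Bigr)^{1/2}\Bigl(\izf \rrho(x)\,dx\Bigr)^{1/2}<\infty,
\end{equation*}
so that $u_{\lkap}\in\mL^{1}\cap\mL^{2}$. In contrast to the high-killing regime of Theorem \ref{limkbiggerev}, where the $\mL^{1}$ bound demanded the delicate potential-theoretic estimate of Lemma \ref{integrability}, here integrability is immediate from the finiteness of $\Rho$, which is precisely why the main obstacle of the analogous theorem evaporates in the entrance case.

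Finally, with $u_{\lkap}\in\mL^{1}\cap\mL^{2}$ in hand, Lemma \ref{L:L1L2} yields \eqref{E:recurrent} and, via the quotient computation in its proof, convergence from a point mass to the distribution with $\Rho$-density proportional to $u_{\lkap}$; one should note only that the isolation of $\lkap$ invoked in that lemma is now supplied by Theorem \ref{nichtnatuerlich} rather than by Lemma \ref{spectrum}\,(\ref{it:isolated}), which does not apply under the present hypotheses. To pass from a point mass to an arbitrary compactly supported $\nu$, I would integrate the limit \eqref{laststeptoqsd} against $\nu$: because $\supp\nu$ is compact, the Harnack factor $\zeta$ and the normalising integrals appearing in the proof of Lemma \ref{L:L1L2} are bounded on $\supp\nu$, yielding a single dominating function $\text{const}\cdot u_{\lkap}(y)$ for $e^{\lkap t}p^{\kappa}(t,x,y)$ valid for all $x\in\supp\nu$. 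Dominated convergence then gives $\lim_{t}e^{\lkap t}\P_{\nu}(X_{t}\in U,\tp>t)=c\,\bigl(\int u_{\lkap}\,d\nu\bigr)\int_{U}u_{\lkap}\,\rrho$, and taking the quotient against the same expression with $U=(0,\infty)$ completes the proof. The only step that requires genuine care is this uniform domination over $\supp\nu$; everything else is a direct appeal to the results already established.
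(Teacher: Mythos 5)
Your proposal is correct and follows essentially the same route as the paper: Theorem \ref{nichtnatuerlich} gives the discrete spectrum and hence the isolated eigenvalue, finiteness of the speed measure at an entrance boundary upgrades $\mL^{2}$ to $\mL^{1}$, and Lemma \ref{L:L1L2} then delivers the convergence. Your two added observations --- that the isolation of $\lkap$ invoked inside Lemma \ref{L:L1L2} must here be supplied by Theorem \ref{nichtnatuerlich} rather than by Lemma \ref{spectrum}(v), and the explicit uniform domination over $\supp\nu$ for the passage from point masses to general compactly supported initial distributions --- are details the paper leaves implicit, and they are handled correctly.
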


\begin{pf}
The first assertion follows from Theorem \ref{nichtnatuerlich}. Lemma
\ref{L:L1L2} directly implies that $X_t$ converges to the
quasistationary distribution $u_{\lambda_0^{\kappa}}$ from every
compactly supported initial distribution if and only if $\int
_0^{\infty}u_{\lambda_0^{\kappa}}(y) \rrho(y)\,dy$ is finite. Since
we are assuming that $0$ is regular and $\infty$ is an entrance
boundary the speed measure $\Rho$ must be finite. Thus the $\mL
^{2}(\Rho)$ function $u_{\lkap}$ is also in $\mL^{1}(\Rho)$.\vspace*{-2pt}
\end{pf}

\subsection{\texorpdfstring{Existence and uniqueness of quasistationary distributions when \mbox{$\kappa\equiv0$}}
{Existence and uniqueness of quasistationary distributions when kappa equivalent 0}}
In this short section we first reformulate the criterium for existence
of quasistationary distributions in the case $\kappa\equiv0$ and
$\mathbb{P}_x(T_0< \infty)=1$. This allows a direct comparison with
the criterium for the uniqueness of the quasistationary distriubution,
which has been recently established in \citet{6authors}. We consider
only the case $\alpha>0$, since otherwise there is no killing at all,
and this is merely a classical situation of a stationary distribution.
The interesting point in the next result consists of the fact that the
existence of some exponential moment of the first hitting time $T_0$ of
$0$ is equivalent to the existence of quasistationary distributions for
any $\alpha>0$.\vspace*{-2pt}
%
\begin{theorem}
Let $0$ be regular and let infinity be inaccessible. Moreover, suppose
that $\alpha\in(0,\infty]$, $\kappa\equiv0$ and $\mathbb{P}_x(T_0
< \infty) = 1$.
\begin{itemize}[(ii)]
\item[(i)] There exists a quasistationary distribution if and only if
for some $\varepsilon> 0$ and some (hence every) $x >0$
\[
\mathbb{E}_x [ e^{\varepsilon T_0} ] < \infty.
\]
\item[(ii)] There exists a unique quasistationary distribution if and
only if for every $a > 0$ there exists $y_a > 0$ such that
\[
\sup_{x > y_a}\mathbb{E}_x [e^{a T_{y_a}} ] < \infty.
\]
This is true if and only if infinity is an entrance boundary.\vspace*{-2pt}
\end{itemize}
\end{theorem}

\begin{pf}
Assertion (ii) follows from assertion (i) in combination with Theorem~7.3 of \citet{6authors}.
In order to prove assertion (i) let us first
assume that there exists a quasistationary distribution $\nu$. Then
there exists $\bar{\lambda}$ such that $\mathbb{P}_{\nu} (\tau
_{\partial}>t ) = e^{-\bar{\lambda}t}$. Since hitting $0$ is
certain we conclude that $\bar{\lambda}>0$. As shown in Lemma \ref{lemA} of
the \hyperref[app]{Appendix} the measure $\nu$ is absolutely continuous with respect
to $\Gamma$ with a strictly positive and continuous density $\varphi
$. Therefore, we get for $0<a<b$ and some positive constant $c>0$
%
\begin{eqnarray}
e^{-\bar{\lambda}t}&=&\mathbb{P}_{\nu} (\tau_{\partial}>t) =
\int_0^{\infty}\varphi(y)\mathbb{P}_y(\tau_{\partial}>t)\Gamma
(dy)\nonumber \\
&\geq& c\int_a^b\varphi(y)\bigl(e^{-tL^{0,\alpha}}\mathbf
{1}_{(a,b)}\varphi\bigr)(y)\Gamma(dy) \\
&=& c \int_{[0,\infty)}e^{-t\lambda}\,d\bigl\|E^{0,\alpha}_{\lambda
}\bigl(\mathbf{1}_{(a,b)}\varphi\bigr)\bigr\|^2.\nonumber
\end{eqnarray}
Using Lemma \ref{L:supportbase} we therefore have $\lambda
_0^{0,\alpha}=\inf\operatorname{supp} d\|E^{0,\alpha}(\mathbf
{1}_{(a,b)}\varphi)\|^2(\lambda) \geq\bar{\lambda}>0$. Since the
essential spectra of $L^{0,\alpha}$ and $L^{0,\infty}$ coincide,
either $\lambda_0^{0,0}$ is strictly positive or $0$ is an isolated
eigenvalue. According to Lemma \ref{spectrum}\hyperlink{it:noisolated}{(vi)} the latter case
cannot occur. But this means, according to Corollary \ref{CMSM}, that
$\varphi(\lambda_0^{0,0},y)\Gamma(dy)/\int_0^{\infty}\varphi
(\lambda_0^{0,0},y)\Gamma(dy)$ is the quasilimiting distribution of
the diffusion killed at $0$ and therefore $\lim_{t \rightarrow\infty
}\frac{1}{t}\log\mathbb{P}_x(T_0>t)=-\lambda_0^{0,0}$. Hence
$\mathbb{E}_x[e^{\varepsilon T_0}]<\infty$ for every $0<\varepsilon<
\lambda_0^{0,0}$.

Assume now that $\mathbb{E}_x[e^{\varepsilon T_0} ]<\infty$ for
some $0<\varepsilon$. Then obviously
\[
\lim_{t \rightarrow\infty}\frac{1}{t}\log\mathbb{P}_x(T_0>t)<0.
\]
Using implication \eqref{elementaryfact} we see that the asymptotic
killing rate $\eta_x$ is strictly bigger than $0$. By Lemma \ref
{SEcomb} the process $X_t$ with absorption at $0$ does not escape to
infinity; hence it converges. By Corollary \ref{CMSM} we then have
\mbox{$\lambda_0^{0,\infty}>0$}. Using the same argument as in the first
part of the proof of assertion (i) we conclude that $\lambda
_0^{0,\alpha}>0$, which by Corollary \ref{CMSM} implies the existence
of a~quasistationary distribution.
\end{pf}

Thus uniqueness of quasistationary distributions is equivalent to the
``time of implosion from infinity into the interior'' having
exponential moments of all orders, whereas existence of a
quasistationary distribution is equivalent to the existence of \textit
{some} exponential moment of the first hitting time of $0$. Both
results together account for the existence and uniqueness of
quasistationary distributions.

\begin{remark}
It seems to be a rather general principle that there are three
possibilities. The first possibility is the nonexistence of
quasistationary distributions. If there exists a quasistationary
distribution, then it is either unique or there is a whole continuum of
quasistationary distributions parameterized by a real interval. This is
at least true for birth and death processes on the nonnegative
integers; cf. \citet{jC78}.
\end{remark}

\section{The dichotomy and the integrability of the principal eigenfunction}
According to the basic dichotomy of \citet{quasistat}, as extended
here, we know that under the assumptions $K \ne\lambda_0^{\kappa}$
and the nonaccessibility of infinity either $X_t$ converges to the
quasistationary distribution $\varphi(\lambda_0^{\kappa},\cdot)/
\int_0^{\infty}\varphi(\lambda_0^{\kappa},y)\,d\gamma(y)$ or $X_t$
escapes to infinity. Moreover, we have shown that escape to infinity
occurs (under these assumptions) if and only if the boundary point
infinity is natural, the underlying unkilled diffusion is transient and
$\lambda_0^{\kappa}>K$.

Another way of expressing this dichotomy is in terms of the
integrability of the principal eigenfunction. It follows without much
effort from Theorem \ref{generallocalMandl} [see, e.g., Proposition
2.3 in \citet{quasistat}], that $\int_0^{\infty} \varphi(\lambda
_0^{\kappa},y) \Gamma(dy) = \infty$ implies escape to infinity. Is
integrability of the principal eigenfunction actually equivalent to
Yaglom convergence, at least under the condition $K \ne\lambda
_0^{\kappa}$? We answer in the affirmative, stating the result as
a~theorem because of its salience, although it might strictly be seen as
a~fairly direct corollary to the results of Section~\ref{sec:qstat}.

\begin{theorem}\label{thm:integrability}
Assume that $0$ is regular and that infinity is not accessible.
Moreover, suppose that $K \ne\lambda_0^{\kappa}$. Then $X_t$ escapes
to infinity if and only if
\[
\int_0^{\infty} \varphi(\lambda_0^{\kappa},y)\,d\Gamma(y) =
\infty.
\]
If $\int_0^{\infty} \varphi(\lambda_0^{\kappa},y)\,d\Gamma(y) <
\infty$, then $X_t$ converges to the quasilimiting distribution $\frac
{\varphi(\lambda_0^{\kappa},y)\,d\Gamma(y)}{\int_0^{\infty} \varphi
(\lambda_0^{\kappa},y)\,d\Gamma(y)}$.
\end{theorem}
\begin{pf}
All we need to show is that
\[
\int_0^{\infty} \varphi(\lambda_0^{\kappa},y)\,d\Gamma(y) = \infty
\]
holds when there is escape to infinity; that is, in the case $\lambda
_0^{\kappa}>K$, $\infty$ is natural, and $\int_0^{\infty}\gamma
(y)^{-1}\,dy < \infty$. In all other cases we know that $X_t$
converges to quasistationary, and in particular the principal
eigenfunction is integrable.

Under these assumptions we know from the proof of Theorem \ref{Escape}
[see equation \eqref{den2}] that for every $\varepsilon> 0$ there
exists a nontrivial, nonnegative increasing function $C_{\varepsilon
}(\cdot)$ such that for $y > 0$ and $t > 0$
%
\begin{equation}\label{1rev}
\mathbb{P}_y (\tau_{\partial}>t ) \geq C_{\varepsilon
}(y) e^{-(K+\varepsilon)t}.
\end{equation}
Let us assume that $m:=\int_0^{\infty}\varphi(\lambda_0^{\kappa
},y)\,d\Gamma(y)<\infty$ and show that this gives a~contradiction.
Integration of \eqref{1rev} with respect to the probability measure
$m^{-1}\varphi(\lambda_0^{\kappa},y)\,d\Gamma(y)$ gives
%
\begin{eqnarray}\label{Arev}
&&m^{-1}\int_0^{\infty}\varphi(\lambda_0^{\kappa},y)\mathbb
{P}_y (\tau_{\partial}>t )\,d\Gamma(y) \nonumber\\[-8pt]\\[-8pt]
&&\qquad \geq
e^{-(K+\varepsilon)t}m^{-1}\int_0^{\infty}C_{\varepsilon}(y)\varphi
(\lambda_0^{\kappa},y)\,d\Gamma(y).\nonumber
\end{eqnarray}
On the other hand, using the symmetry of the semigroup we have
%
\begin{eqnarray}\label{L1norm}
&&m^{-1}\int_0^{\infty}\varphi(\lambda_0^{\kappa},y)\mathbb
{P}_y (\tau_{\partial}>t )\,d\Gamma(y) \nonumber\\[-8pt]\\[-8pt]
&&\qquad = m^{-1}\int
_0^{\infty} (e^{-tL^{\kappa}}\varphi(\lambda_0^{\kappa},\cdot
) )(y)\,d\Gamma(y).\nonumber
\end{eqnarray}
Now observe that $\varphi(\lambda_0^{\kappa},\cdot)$ is $\lambda
_0^{\kappa}$-subinvariant [see, e.g., Lemma 7.7 in \citet{quasistat}],
that is,
%
\begin{equation}\label{subinv}
e^{-tL^{\kappa}}\varphi(\lambda_0^{\kappa},\cdot) \leq e^{-\lambda
_0^{\kappa}t}\varphi(\lambda_0^{\kappa},\cdot).\vadjust{\goodbreak}
\end{equation}
Using \eqref{L1norm}, \eqref{Arev} and \eqref{subinv}, for all $t>0$,
%
\begin{eqnarray}
m&\geq&\int_0^{\infty}\varphi(\lambda_0^{\kappa},y)\mathbb
{P}_y (\tau_{\partial}>t )\,d\Gamma(y) \nonumber\\[-8pt]\\[-8pt]
&\geq& e^{(\lkap-K-\varepsilon)t}\int_0^{\infty}C_{\varepsilon}(y)\varphi(\lambda
_0^{\kappa},y)\,d\Gamma(y).\nonumber
\end{eqnarray}
As we know that $\lkap-K-\varepsilon> 0$ for $\varepsilon$
sufficiently small, and the integral is assumed nonzero, the right-hand
side goes to $\infty$ as $t\to\infty$, which is a~contradiction if
$m$ is finite. Therefore $m=\int_0^{\infty}\varphi(\lambda
_0^{\kappa},y)\,d\Gamma(y)= \infty$.
\end{pf}

\begin{appendix}

\section*{Appendix}\label{app}
\setcounter{equation}{0}

In this \hyperref[app]{Appendix} we sketch a proof of the regularity of quasistationary
distributions of one-dimensional diffusions with one regular boundary.

\renewcommand{\thelemmaA}{A}
\begin{lemmaA}\label{lemA}
Let $L^{\kappa}$ be one of the self-adjoint realizations considered in
this work, of the Sturm--Liouville expression $\tau+ \kappa$ in $\mL
^2$; and let $\tilde{\nu}$ be a~quasistationary distribution. Then
$\tilde{\nu}$ is absolutely continuous with respect to the measure
$\Gamma$, with a positive and continuous density $g\dvtx[0,\infty
)\rightarrow\mathbb{R}$.
\end{lemmaA}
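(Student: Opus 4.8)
The plan is to read the density off the transition kernel and then deduce its regularity from the regularity of that kernel. By the defining property of a quasistationary distribution (Remark \ref{qsl}) together with the Feynman--Kac representation \eqref{FKrepresent}, for every Borel $A\subset(0,\infty)$ and every $t>0$,
\[
\tilde\nu(A)=\P_{\tilde\nu}\bigl(X_t\in A\mid \tp>t\bigr)=\frac{1}{\P_{\tilde\nu}(\tp>t)}\int_{(0,\infty)}\int_A p^{\kappa}(t,x,y)\,\rrho(y)\,dy\,\tilde\nu(dx),
\]
where $p^{\kappa}(t,\cdot,\cdot)$ denotes the integral kernel of $e^{-tL^{\kappa}}$ and $\P_{\tilde\nu}(\tp>t)>0$ because $\P_x(\tp>t)>0$ for every $x\in(0,\infty)$. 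Applying the Markov property to the quasistationarity identity shows that $a(r):=\P_{\tilde\nu}(\tp>r)$ satisfies $a(t+s)=a(t)a(s)$, whence $\P_{\tilde\nu}(\tp>t)=e^{-\lambda t}$ for some $\lambda\ge 0$. Since the integrand is nonnegative, Tonelli's theorem permits interchanging the integrals, which gives
\[
\tilde\nu(A)=\int_A g(y)\,\rrho(y)\,dy,\qquad g(y):=e^{\lambda t}\int_{(0,\infty)}p^{\kappa}(t,x,y)\,\tilde\nu(dx).
\]
This establishes absolute continuity with respect to $\Rho$; the expressions for different $t>0$ agree $\Rho$-almost everywhere, and $g$ is an $\mL^1(\Rho)$ probability density. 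Positivity is then immediate, since $p^{\kappa}(t,x,y)>0$ for all $x,y\in(0,\infty)$ and $\tilde\nu$ is a probability measure, so the integral defining $g(y)$ is strictly positive for every $y\in(0,\infty)$.

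For continuity I would pass to a self-reproducing form of $g$. Splitting $t=s+s'$ with $s,s'\ge1$ via Chapman--Kolmogorov and interchanging the integrals as before yields
\[
g(y)=e^{\lambda s'}\int_{(0,\infty)}p^{\kappa}(s',w,y)\,g(w)\,\rrho(w)\,dw,
\]
so that $g$ is reproduced (up to the factor $e^{\lambda s'}$) by the smoothing kernel $p^{\kappa}(s',\cdot,\cdot)$ acting on the $\mL^1(\Rho)$ function $g$. Fix $y_0\in(0,\infty)$ and let $y$ range over $|y-y_0|<\tfrac12\wedge\tfrac{y_0}{4}$. By symmetry of the kernel and the local parabolic Harnack inequality \eqref{E:harnack2}, one has $p^{\kappa}(s',w,y)=p^{\kappa}(s',y,w)\le \zeta(y)\,p^{\kappa}(s'+1,y_0,w)\le C\,p^{\kappa}(s'+1,y_0,w)$, with $C:=\sup_{|y-y_0|<\frac12\wedge\frac{y_0}{4}}\zeta(y)<\infty$ by local boundedness of $\zeta$. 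The dominating function $w\mapsto C\,p^{\kappa}(s'+1,y_0,w)g(w)\rrho(w)$ has finite integral $C\,e^{-\lambda(s'+1)}g(y_0)$, so joint continuity of $p^{\kappa}(s',\cdot,\cdot)$ and dominated convergence yield continuity of $g$ on $(0,\infty)$; continuity up to the regular boundary $0$ follows from the continuous extension of the kernel in its spatial argument there.

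The main obstacle is exactly this domination: $\tilde\nu$, and hence $g$, may have a slowly decaying tail as $w\to\infty$, and we possess no a priori pointwise bound on $g$, so one cannot directly dominate the integrand uniformly in $y$. The resolution is to trade the potentially rough measure $\tilde\nu$ for the $\mL^1(\Rho)$ density $g$ in the reproducing identity, after which the Harnack comparison \eqref{E:harnack2} supplies a dominating function whose integral is finite and, indeed, explicitly equal to a multiple of $g(y_0)$. Verifying the behavior at the regular endpoint $0$ must be handled separately but is routine.
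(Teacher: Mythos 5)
Your argument is correct in substance, but it takes a genuinely different route from the paper. The paper's proof is elliptic: it takes absolute continuity as given by ``straightforward arguments,'' observes that quasistationarity forces $\int g(x)(L^{\kappa}+\lambda)f(x)\,d\Rho(x)=0$ for all test functions $f$, and then invokes the explicit description of the domain of the adjoint of the minimal Sturm--Liouville operator to conclude that $g$ and $\rrho g'$ are absolutely continuous and that $g$ solves the eigenvalue ODE classically; continuity (indeed more) on $(0,\infty)$ and the existence of $\lim_{x\to 0+}g(x)$ then come from ODE theory at the regular endpoint. Your proof is parabolic: you represent $g(y)=e^{\lambda t}\int p^{\kappa}(t,x,y)\,\tilde{\nu}(dx)$, which has the advantage of actually \emph{proving} the absolute continuity that the paper only asserts, gives positivity for free from positivity of the kernel, and then obtains interior continuity from the self-reproducing identity $g=e^{\lambda s'}\,e^{-s'L^{\kappa}}g$ together with the Harnack domination \eqref{E:harnack2} and dominated convergence. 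What the paper's route buys is extra regularity ($g$ is identified as a classical solution of the ODE, so $\rrho g'$ is absolutely continuous) and, more importantly, a clean treatment of the endpoint $0$: your domination argument degenerates there, because \eqref{E:harnack2} requires $|z-x|<\frac{1}{2}\wedge\frac{|x|}{4}$, so continuity of $g$ up to the regular boundary is not actually ``routine'' in your framework --- it needs either a boundary Harnack estimate or the ODE argument the paper uses. You flag this, but it is the one step you would genuinely have to supply; everything else (including the implicit step that $g$ is finite and locally bounded at every point, which follows by applying your Harnack bound centred at an a.e.\ point where $g$ is finite) goes through with the tools already available in the paper.
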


\begin{pf}
The main assertion of the lemma will be almost obvious to readers who
are familiar with regularity theory for stationary distributions.
Observe that the main point here is the continuity up to the boundary
$0$. Indeed, the main strategy we follow is very similar to the case of
stationary distributions. Straightforward arguments show that $\tilde
{\nu}$ is absolutely continuous with respect to the measure $\Gamma$.
Denote by $g$ the density of $\tilde{\nu}$ with respect to $\Gamma$.
The equation
\[
e^{-\lambda t}\tilde{\nu}(f) = \mathbb{E}_{\tilde{\nu}}
[f(X_t);\tau_{\partial}>t ];\qquad  \lambda\geq0, f \in
C_c((0,\infty)),
\]
which results from quasistationarity of $\tilde{\nu}$, implies that
%
\begin{equation}\label{E:ellipticequation}
 \forall f \in C^{\infty}_c((0,\infty))\dvtx\langle\tilde{\nu},
(L^{\kappa}+\lambda)f\rangle= \int g(x)(L^{\kappa}+\lambda)f(x)\,
d\Gamma(x) = 0.\hspace*{-35pt}
\end{equation}
This means that for any $0<c<d$,
\[
g \in\mathcal{D}(T_{c,d}^*) \quad \mbox{and}\quad  T_{c,d}^*g = 0,
\]
where $T_{c,d}^*$ denotes the adjoint [taken in the Hilbert space $\mL
^2((c,d),\Gamma)$] of the minimal operator $T_{c,d}$ defined as the
restriction of the differential operator $L^{\kappa}+ \lambda$ to
$C^{\infty}_c((c,d))$. The domain of $T_{c,d}^*$ is given by
%
\begin{eqnarray}
\qquad \mathcal{D}(T_{c,d}^*) &=&  \biggl\lbrace f \in\mL^2((c,d),\Gamma)
| f,\gamma f' \mbox{ absolutely continuous in $(c,d)$
and}\nonumber\\[-8pt]\\[-8pt]
&&\hspace*{107pt}\frac{-1}{2\gamma}(\gamma f')' + (\kappa+\lambda) f \in
\mL^2((c,d),\Gamma) \biggr\rbrace,\nonumber
\end{eqnarray}
and for $f \in\mathcal{D}(T_{c,d}^*)$ one has $T_{c,d}^* = \frac
{-1}{2\gamma}(\gamma f')' + (\kappa+ \lambda)f$. Since $c,d \in
(0,\infty)$ are arbitrary we conclude that
%
\begin{equation}
\frac{-1}{2\gamma}(\gamma f)' (x) + (\kappa+ \lambda)f (x)=0
\qquad \mbox{in $(0,\infty)$}.
\end{equation}
Due to the regularity of the boundary point $0$ we conclude (using
standard ODE theory) that
\[
\lim_{x \rightarrow0+}g(x) \in\mathbb{R}
\]
exists.
\end{pf}

\subsection{Very short summary of extension theory}
In this section we give a~summary of the analytic results we applied in
Lemma \ref{L:SL}, a full account of which can be found in Chapter 10
of \citet{jW00}. For an arbitrary symmetric operator $S$ in a complex
Hilbert space $\mathcal{H}$ let $\operatorname{Ran}(S-z)$ denote the
image of the linear operator $S-z$ and let $\operatorname
{Ran}(S-z)^{\perp}$ denote its orthonal complement. Set $\beta
_+:=\dim\operatorname{Ran}(S+i)^{\perp}$ and $\beta_-:=\dim
\operatorname{Ran}(S-i)^{\perp}$.
Observe that in the symmetric case $ \dim\operatorname{Ran}(S\pm
i)^{\perp} = \dim\operatorname{Ker}(S^* \mp i)$. Thus the deficiency
indices give the dimension of the solution space of the equation $(S^*
\mp i)u=0$ ($u \in\mathcal{H}$).
The pair $(\beta_+,\beta_-)$ are called the deficiency numbers of $S$
and describes the ``number'' of self-adjoint extensions of $S$. If in
the notation of the beginning of Section 2.1 $S=\tau_{p,q,V}$ is, for
example, a (minimal) Sturm--Liouville differential expression on the
interval $(a_1,a_2)$, then one always has $\beta_+=\beta_-$. Moreover,
\[
(\beta_+,\beta_-)=
\cases{
(0,0), &\quad if $a_1$ and $a_2$ are both limit point boundaries, \vspace*{2pt}\cr
(1,1), &\quad if one boundary point is limit point and \cr
&\quad  the other boundary point limit circle,\vspace*{2pt}\cr
(2,2), &\quad if $a_1$ and $a_2$ are both limit circle boundaries.
}
\]
We note that these formulas follow immediately from the definition of
limit-point type/limit-circle type, the remark appearing after
Definition \ref{lplc} and the fact that the deficiency indices give
the dimensions of the space of solutions to eigenvalue equations. The
case $(\beta_+,\beta_-)=(0,0)$ corresponds to essential
self-adjointness, that is, the case, where there is is only one
self-adjoint extension. Moreover, if $(\beta_+,\beta_-)=(m,m)$ a
symmetric extension $T$ of $S$ [i.e., $\mathcal{D}(S)\subset\mathcal
{D}(T), T\restriction\mathcal{D}(S)=S$] is self adjoint if and only
if $\mathcal{D}(T)/\mathcal{D}(S)$ has dimension $m$. Thus in this
case self-adjoint extensions of $S$ are exactly the $m$-dimensional
symmetric extensions.
\end{appendix}

\section*{Acknowledgments}
The authors would like to thank Ross Pinsky for sending them his work
[\citet{rP09}] prior to publication and for his useful remarks. Moreover
we would like to thank Gady Kozma for sending us his work concerning
the Davies conjecture. The hospitality of Worcester College, Oxford for
M. Kolb in support of this collaboration is gratefully acknowledged.
Moreover, the authors thank the referee for a very thorough and careful
reading of the paper. 


%

\printaddresses

\end{document}